\documentclass[times,sort&compress,3p]{elsarticle}
\journal{}

\RequirePackage{amsthm,amsmath,amsfonts,amssymb}
\RequirePackage[colorlinks,citecolor=blue,urlcolor=blue]{hyperref}
\RequirePackage{graphicx}
\RequirePackage{bm}
\RequirePackage{color}
\RequirePackage{calrsfs}
\RequirePackage{enumitem}
\setitemize{noitemsep,topsep=0pt,parsep=0pt,partopsep=0pt}   

\makeatletter
\def\ps@pprintTitle{%
 \let\@oddhead\@empty
 \let\@evenhead\@empty
 \def\@oddfoot{}%
 \let\@evenfoot\@oddfoot}
\makeatother

\usepackage[labelfont=bf]{caption}

\theoremstyle{plain}
\newtheorem{prop}{Proposition}

\newtheorem{thm}[prop]{Theorem}
\newtheorem{cor}[prop]{Corollary}
\newtheorem{lem}[prop]{Lemma}
\newtheorem{cond}[prop]{Condition}

\newtheoremstyle{remark}
  {}{}{}{}{\bfseries}{.}{.5em}{{\thmname{#1 }}{\thmnumber{#2}}{\thmnote{ (#3)}}}
\theoremstyle{remark}
\newtheorem{remark}[prop]{Remark}

\newcommand{\eps}{\varepsilon}
\newcommand{\N}{\mathbb{N}}
\newcommand{\Z}{\mathbb{Z}}
\newcommand{\R}{\mathbb{R}}

\newcommand{\Bb}{\mathbb{B}}
\newcommand{\Cb}{\mathbb{C}}

\newcommand{\Xb}{\mathbb{X}}
\newcommand{\Zb}{\mathbb{Z}}

\newcommand{\Bc}{\mathcal{B}}
\newcommand{\Cc}{\mathcal{C}}
\newcommand{\Fc}{\mathcal{F}}
\newcommand{\Kc}{\mathcal{K}}
\newcommand{\Gc}{\mathcal{G}}

\newcommand{\Xc}{\mathcal{X}}
\newcommand{\Ac}{\mathcal{A}}
\newcommand{\Wc}{\mathcal{W}}

\newcommand{\Beta}{\mathrm{beta}}
\newcommand{\dd}{\mathrm{d}}

\newcommand{\Ex}{\mathbb{E}}
\newcommand{\Var}{\mathrm{Var}}
\newcommand{\Cov}{\mathrm{Cov}}
\newcommand{\1}{\mathbf{1}}
\newcommand{\ip}[1]{\lfloor #1 \rfloor}
\newcommand{\up}[1]{\lceil #1 \rceil}
\renewcommand{\Pr}{\mathbb{P}}
\newcommand{\pobs}[1]{\hat{\bm #1}}
\newcommand{\leqst}{\leq_{\text{st}}}
\newcommand{\leqlr}{\leq_{\text{lr}}}

\newcommand{\sss}[1]{\scriptscriptstyle{#1}}

\newcommand{\ik}[1]{{\color{black} {#1}}}

\allowdisplaybreaks

\begin{document}

\begin{frontmatter}
  
\title{A class of smooth, possibly data-adaptive nonparametric copula estimators containing the empirical beta copula}

\author[A1]{Ivan Kojadinovic\corref{mycorrespondingauthor}}
\author[A1,A2]{Bingqing Yi}

\address[A1]{CNRS / Universit\'e de Pau et des Pays de l'Adour / E2S UPPA, Laboratoire de math\'ematiques et applications -- IPRA, UMR 5142, B.P. 1155, 64013 Pau Cedex, France.}
\address[A2]{School of Mathematics \& Statistics, The University of Melbourne, Parkville, VIC 3010, Australia.}

\cortext[mycorrespondingauthor]{Corresponding author. Email address: \url{ivan.kojadinovic@univ-pau.fr}}

\begin{abstract}
  A broad class of smooth, possibly data-adaptive nonparametric copula estimators that contains empirical Bernstein copulas introduced by Sancetta and Satchell (and thus the empirical beta copula proposed by Segers, Sibuya and Tsukahara) is studied. Within this class, a subclass of estimators that depend on a scalar parameter determining the amount of marginal smoothing and a functional parameter controlling the shape of the smoothing region is specifically considered. Empirical investigations of the influence of these parameters suggest to focus on two particular data-adaptive smooth copula estimators that were found to be uniformly better than the empirical beta copula in all of the considered Monte Carlo experiments. Finally, with future applications to change-point detection in mind, conditions under which related sequential empirical copula processes converge weakly are provided. 
\end{abstract}

\begin{keyword}
data-adaptive smooth empirical copulas \sep
empirical beta copula \sep
sequential empirical copula processes.
\MSC[2010] Primary 62G05. 
Secondary 62G20.
\end{keyword}

\end{frontmatter}


\section{Introduction}


Let $\Xc_{1:n} = (\bm X_1,\dots,\bm X_n)$ be a stretch of $d$-dimensional random vectors from a stationary time series $(\bm X_i)_{i \in \Z}$. The distribution function (d.f.) $F$ of $\bm X_1$ is assumed to have continuous univariate margins $F_1,\dots,F_d$. As a consequence of a well-known theorem of \citet{Skl59}, the multivariate d.f.\ $F$ can be expressed as
\begin{equation*}
F(\bm x) = C\{F_1(x_1),\dots,F_d(x_d)\}, \qquad \bm x \in \R^d,
\end{equation*}
in terms of a unique copula $C$, that is, a unique $d$-dimensional d.f.\ with standard uniform margins.

To carry out statistical inference on the unknown copula $C$ using the available observations $\Xc_{1:n}$, it is often necessary to have at hand nonparametric estimators of $C$. The best known such estimator is called the empirical copula \cite{Deh79}. Under a rather weak condition (see Condition~\ref{cond:Bn} in Section~\ref{sec:asym}), the latter is asymptotically equivalent to the empirical d.f.\ of the multivariate ranks obtained from $\Xc_{1:n}$ scaled by $1/n$ which was studied in \cite{Rus76}. Two smooth versions that are genuine copulas when there are no ties in the components samples of $\Xc_{1:n}$ are the empirical checkerboard copula \cite[see, e.g.,][and the references therein]{GenNesRem17,GenNes07} and the empirical beta copula proposed in~\cite{SegSibTsu17}. The latter was found to have better small-sample properties than the former and the classical empirical copula in the Monte Carlo experiments reported in \cite{SegSibTsu17}.

In this work, we investigate extensions of the construction that allowed \citet{SegSibTsu17} to study the asymptotics of empirical Bernstein copulas introduced in \cite{SanSat04}, and thus of the empirical beta copula. The initial motivation for this undertaking stems from an early attempt to obtain sequential versions of the asymptotic results of \cite{SegSibTsu17} (with an application to change-point detection in mind) during which it appeared that alternative ways of smoothing could be considered. In particular, we allow the underlying smoothing distributions to depend on the data, leading to a rather broad class of smooth, possibly data-adaptive empirical copulas.

The paper is organized as follows. In Section~\ref{sec:class}, we define a broad class of smooth, possibly data-adaptive nonparametric copula estimators that contains empirical Bernstein copulas, and thus the empirical beta copula. Conditions under which such smooth estimators have standard uniform univariate margins, are multivariate d.f.s or are genuine copulas are provided in Section~\ref{sec:properties}. In Section~\ref{sec:specific}, we focus on a subclass of empirical copulas that depend on a scalar parameter that determines the amount of marginal smoothing and a functional parameter that controls the shape of the smoothing region in $[0,1]^d$. Using an implementation for the \textsf{R} statistical environment \cite{Rsystem} (available on the web page of the first author), we investigate the influence of these parameters through Monte Carlo experiments in Section~\ref{sec:MC:est} and, as a result, we suggest to focus on two specific smooth data-adaptive copula estimators that were found to be uniformly better than the empirical beta copula in all of the considered Monte Carlo experiments. Finally, in Section~\ref{sec:asym}, we study the weak convergence of the sequential empirical copula processes related to the general class of smooth estimators proposed in Section~\ref{sec:class} and, in particular, establish conditions under which they are asymptotically equivalent to the classical sequential empirical copula process initially studied in \cite{BucKoj16}.


\section{A broad class of smooth, possibly data-adaptive empirical copulas}
\label{sec:class}

For any $j \in \{1,\dots,d\}$, let $F_{1:n,j}$ be the empirical d.f.\ computed from the $j$th component sample $X_{1j},\dots,X_{nj}$ of the available observations $\Xc_{1:n}$. Then, let
$$
R_{ij}^{1:n} = n F_{1:n,j}(X_{ij}) = \sum_{t=1}^n \1( X_{tj} \le X_{ij} )
$$
be the (maximal) rank of $X_{ij}$ among $X_{1j},\dots,X_{nj}$. Furthermore, let $\bm R^{1:n}_i =  \left(R_{i1}^{1:n}, \dots, R_{id}^{1:n} \right)$ and $\pobs{U}^{1:n}_i = \bm R^{1:n}_i / n$, $i \in \{1,\dots,n\}$, be the multivariate ranks and the multivariate scaled ranks, respectively, obtained from $\Xc_{1:n}$. Note that the $d$-dimensional random vectors $\pobs{U}^{1:n}_1, \dots, \pobs{U}^{1:n}_n$ are sometimes referred to as pseudo-observations from $C$ \cite[see, e.g.,][Chapter~4]{HofKojMaeYan18}. Following \cite{Rus76}, the empirical copula $C_{1:n}$ of $\Xc_{1:n}$ at $\bm u = (u_1,\dots,u_d) \in [0,1]^d$ can then be defined by
\begin{equation}
\label{eq:C:1n}
C_{1:n}(\bm u) = \frac{1}{n} \sum_{i=1}^n  \prod_{j=1}^d \1\left( R_{ij}^{1:n} / n \leq u_j \right)
= \frac{1}{n} \sum_{i=1}^n  \1(\pobs{U}^{1:n}_i \leq \bm u),
\end{equation}
where inequalities between vectors are to be understood componentwise.

A smooth version of the empirical copula $C_{1:n}$ was proposed in \cite{SegSibTsu17} by replacing indicator functions in~\eqref{eq:C:1n} by d.f.s of particular beta distributions. Specifically, the empirical beta copula of $\Xc_{1:n}$ is defined by
\begin{equation}
\label{eq:C:1n:beta}
  C_{1:n}^\Beta(\bm{u}) = \frac{1}{n} \sum_{i=1}^n \prod_{j=1}^d \Beta_{n,R_{ij}^{1:n}}(u_j), \qquad \bm u = (u_1,\dots,u_d) \in [0,1]^d,
\end{equation}
where, for any $n \in \N$ and $r \in \{1,\dots,n\}$, $\Beta_{n,r}$ denotes the d.f.\ of the distribution Beta$(r, n+1-r)$ (the beta distribution with shape parameters $\alpha = r$ and $\beta = n+1-r$). When there are no ties in the component samples of $\Xc_{1:n}$, Lemma~2.6 in \cite{SegSibTsu17} states that the empirical beta copula is actually a particular case of the empirical Bernstein copula introduced in \cite{SanSat04} and further studied in \cite{JanSwaVer12}. Proposition~2.8 in \cite{SegSibTsu17} additionally shows that the supremum distance between the empirical beta copula $C_{1:n}^\Beta$ and the classical empirical copula $C_{1:n}$ is $O(n^{-1/2} (\ln n)^{1/2})$, thereby suggesting that the empirical beta copula is a smoothing of $C_{1:n}$ at approximately bandwidth $O(n^{-1/2})$; see also Corollary 3.7 in \cite{SegSibTsu17}.

To study the asymptotics of the empirical beta copula, \citet{SegSibTsu17} cleverly used the fact that it could be written as a mixture involving the classical empirical copula. For any $n \in \N$ and $\bm u \in [0,1]^d$, let $\bm \mu_{n,\bm u}$ be the law of the $d$-dimensional random vector $(S_{n,1,u_1}/n,\dots S_{n,d,u_d}/n)$, where $S_{n,1,u_1},\dots S_{n,d,u_d}$ are independent random variables and, for each $j \in \{1,\dots d\}$, $S_{n,j,u_j}$ is Binomial$(n,u_j)$, that is, $S_{n,j,u_j}$ follows a binomial distribution with parameters $n$ and $u_j$. The following lemma proven in Appendix~\ref{proofs:short} is instrumental for understanding the approach of \cite{SegSibTsu17}.

\begin{lem}
  \label{lem:beta:mixture}
For any $\bm u \in [0,1]^d$, $n \in \N$ and $r_1,\dots,r_d \in \{1,\dots,n\}$,
$$
\prod_{j=1}^d \Beta_{n,r_j}(u_j) = \int_{[0,1]^d} \prod_{j=1}^d \1(r_j/n \leq w_j) \dd \mu_{n,\bm u}(\bm w).
$$
\end{lem}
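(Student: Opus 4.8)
The plan is to read the right-hand side probabilistically and thereby reduce the multivariate statement to a one-dimensional identity. By the very definition of $\bm\mu_{n,\bm u}$ as the law of $(S_{n,1,u_1}/n,\dots,S_{n,d,u_d}/n)$, the integral equals
$$
\Pr\!\left( \bigcap_{j=1}^d \{ r_j/n \le S_{n,j,u_j}/n \} \right) = \Pr\!\left( \bigcap_{j=1}^d \{ S_{n,j,u_j} \ge r_j \} \right),
$$
since the integrand is precisely the indicator that each coordinate $w_j = S_{n,j,u_j}/n$ is at least $r_j/n$. Because $S_{n,1,u_1},\dots,S_{n,d,u_d}$ are independent by construction, this probability factorizes as $\prod_{j=1}^d \Pr(S_{n,j,u_j}\ge r_j)$, so it remains only to establish, for each $j$, the univariate identity $\Pr(S_{n,j,u_j}\ge r_j) = \Beta_{n,r_j}(u_j)$.

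First I would therefore isolate the univariate claim: for $S\sim\text{Binomial}(n,p)$ and $r\in\{1,\dots,n\}$,
$$
\Pr(S\ge r) = \sum_{k=r}^n \binom{n}{k} p^k (1-p)^{n-k} = \Beta_{n,r}(p),
$$
that is, the upper binomial tail coincides with the d.f.\ of Beta$(r,n+1-r)$ evaluated at $p$. This classical binomial--beta relation is really the only nontrivial step, and the cleanest route is to differentiate the tail sum $g(p)$ in $p$. Using $\binom{n}{k}k = n\binom{n-1}{k-1}$ and $\binom{n}{k}(n-k)=n\binom{n-1}{k}$, the term-by-term derivative becomes a telescoping sum whose surviving term is $n\binom{n-1}{r-1}p^{r-1}(1-p)^{n-r}$; this is exactly the density of Beta$(r,n+1-r)$, since $n\binom{n-1}{r-1} = n!/\{(r-1)!(n-r)!\} = 1/B(r,n+1-r)$.

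Since $g(0)=0$ for $r\ge 1$ (every summand carries a factor $p^k$ with $k\ge 1$) and $\Beta_{n,r}(0)=0$, integrating this common derivative from $0$ to $p$ yields $g(p)=\Beta_{n,r}(p)$, establishing the univariate identity. Substituting $p=u_j$ and $r=r_j$ and multiplying over $j$ then gives $\prod_{j=1}^d \Pr(S_{n,j,u_j}\ge r_j) = \prod_{j=1}^d \Beta_{n,r_j}(u_j)$, which is the asserted equality. The main obstacle is thus not the multivariate bookkeeping---which is immediate from independence---but the telescoping derivative computation behind the binomial--beta identity; there I would take care to verify that the boundary contribution at the top of the summation range vanishes (it does, as it carries a factor $\binom{n-1}{n}=0$), so that the telescoping sum collapses to the single surviving term.
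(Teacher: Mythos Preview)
Your proof is correct and follows essentially the same approach as the paper: interpret the integral probabilistically, factorize by independence, and reduce to the univariate identity $\Pr(S_{n,j,u_j}\ge r_j)=\Beta_{n,r_j}(u_j)$. The only difference is that the paper simply invokes this classical binomial--beta relation (writing $\sum_{s=r}^n\binom{n}{s}u^s(1-u)^{n-s}=\Beta_{n,r}(u)$ without further comment), whereas you supply a self-contained derivation via the telescoping derivative; your extra work is sound and the boundary check $\binom{n-1}{n}=0$ is handled correctly.
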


Using Lemma~\ref{lem:beta:mixture}, by linearity of the integral, one immediately obtains that, for any $\bm u \in [0,1]^d$,
\begin{equation}
  \label{eq:C:1n:beta:mix}
  C_{1:n}^\Beta(\bm{u}) = \frac{1}{n} \sum_{i=1}^n  \int_{[0,1]^d} \prod_{j=1}^d \1 \left( R_{ij}^{1:n} / n \leq w_j \right) \dd \mu_{n,\bm u}(\bm w) = \int_{[0,1]^d} C_{1:n}(\bm w) \dd \mu_{n,\bm u}(\bm w).
\end{equation}

The main aim of this work is to study generalizations of the empirical beta copula based on alternative smoothing distributions, possibly depending on the data. As we continue, for any $m \in \N$, $\bm x \in (\R^d)^m$ and $\bm u \in [0,1]^d$, $\nu_{\bm u}^{\bm x}$ will denote a law on $[0,1]^d$ such that, for any $j \in \{1,\dots,d\}$, $\int_{[0,1]^d} w_j \dd \nu_{\bm u}^{\bm x}(\bm w) = u_j$; it is meant to be a generalization of $\mu_{n,\bm u}$ in~\eqref{eq:C:1n:beta:mix} that possibly depends on the data set $\bm x$. Furthermore, let $p \geq d$ be a fixed integer, let $\bm U$ be a $p$-dimensional random vector whose components are independent and standard uniform, and consider the following assumption.

\begin{cond}[Construction of smoothing random vectors]
  \label{cond:construction}
  For any $m \in \N$, $\bm x \in (\R^d)^m$ and $\bm u \in [0,1]^d$, there exists a function $\Wc_{\bm u}^{\bm x}: [0,1]^p \to [0,1]^d$ such that $\bm W_{\bm u}^{\bm x} = \Wc_{\bm u}^{\bm x}(\bm U)$ is a $[0,1]^d$-valued random vector with law $\nu_{\bm u}^{\bm x}$.
\end{cond}

To be able to define, for any $n \in \N$, $\Xc_{1:n} = (\bm X_1,\dots, \bm X_n)$ and, for any $m \leq n$, the random vectors $\bm W_{\bm u}^{\bm x}$,  $\bm x \in (\R^d)^m$, $\bm u \in [0,1]^d$, on the same probability space (the case $m < n$ will be needed in Section~\ref{sec:asym}), we assume that the underlying probability space $(\Omega, \Ac, \Pr)$ has a product structure, that is, $\Omega=\Omega_0 \times \Omega_1$ with probability measure $\Pr=\Pr_0 \otimes \Pr_1$, where $\Pr_i$ denotes the probability measure on~$\Omega_i$, such that, for any $\omega \in \Omega$, $\bm \Xc_{1:n}(\omega)$ only depends on the first coordinate of $\omega$ and $\bm U(\omega)$ only depends on the second coordinate of~$\omega$, implying in particular that $\Xc_{1:n}$ and $\bm U$ are independent. In that case, it can be verified using Fubini's theorem that $(\bm x, A) \mapsto  \Pr_1 \{ \Wc_{\bm u}^{\bm x}(\bm U) \in A \} = \Pr_1( \bm W_{\bm u}^{\bm x} \in A) = \nu_{\bm u}^{\bm x}(A)$ defines a regular version of the conditional distribution $\Pr(\bm W_{\bm u}^{\bm \Xc_{1:n}} \in \cdot \mid \Xc_{1:n})$ of $\bm W_{\bm u}^{\bm \Xc_{1:n}}$ given $\Xc_{1:n}$. As a consequence, in the rest of the paper, for an arbitrary real-valued function $h$, $\Ex \{ h(\bm W_{\bm u}^{\bm \Xc_{1:n}})\mid \bm \Xc_{1:n} \}$ is to be understood as $\int_{[0,1]^d} h(\bm w) \dd \nu_{\bm u}^{\sss \Xc_{1:n}}(\bm w)$.

As we continue, for any $m \in \N$, $\bm x \in (\R^d)^m$ and $\bm u \in [0,1]^d$, the $d$ components of the random vector $\bm W_{\bm u}^{\bm x}$ will be denoted by $W_{1,u_1}^{\bm x}, \dots, W_{d,u_d}^{\bm x}$ to indicate that the $j$th component of $\bm W_{\bm u}^{\bm x}$ depends on $u_j$ but not on $u_1,\dots,u_{j-1},u_{j+1},\dots,u_d$. It is however important to keep in mind that the joint distribution of $W_{1,u_1}^{\bm x}, \dots, W_{d,u_d}^{\bm x}$ may still depend on~$\bm u$. The fact that,  for any $\bm x \in (\R^d)^m$ and $\bm u \in [0,1]^d$, $\bm W_{\bm u}^{\bm x}$ has its support included in $[0,1]^d$ with $\Ex(\bm W_{\bm u}^{\bm x}) = \bm u$ implies that, for any $j \in \{1,\dots,d\}$, $\Var(W_{j, u_j}^{\bm x}) = 0$ if $u_j \in \{0,1\}$. Note that, more generally, for any $\bm x \in (\R^d)^m$, $\bm u \in [0,1]^d$ and $j \in \{1,\dots,d\}$, one has that 
\begin{equation}
  \label{eq:var:W}
  \begin{split}
    \Var(W_{j,u_j}^{\bm x}) &= \Ex \left\{ (W_{j,u_j}^{\bm x})^2 \right\}  - u_j^2 \leq   \Ex(W_{j,u_j}^{\bm x}) - u_j^2 = u_j (1 - u_j).
  \end{split}
\end{equation}

By analogy with~\eqref{eq:C:1n:beta:mix}, we then define alternative smooth versions of the empirical copula $C_{1:n}$ of the sample $\Xc_{1:n} = (\bm X_1,\dots, \bm X_n)$ in~\eqref{eq:C:1n} by
\begin{equation}
  \label{eq:C:1n:nu}
  C_{1:n}^\nu(\bm u) = \int_{[0,1]^d} C_{1:n}(\bm w) \dd \nu_{\bm u}^{\sss \Xc_{1:n}}(\bm w), 
  \qquad \bm u \in [0,1]^d.
\end{equation}
Roughly speaking, for any $\bm u \in [0,1]^d$, $C_{1:n}^\nu(\bm u)$ can be thought of as a ``weighted average'' of $C_{1:n}(\bm w)$ for $\bm w$ ``in a neighborhood of $\bm u$'' according to the smoothing distribution $\nu_{\bm u}^{\sss \Xc_{1:n}}$ (that may depend on the available observations $\Xc_{1:n}$).

\begin{remark}
  Let us comment on Condition~\ref{cond:construction}. For $n \in \N$, $\bm x \in (\R^d)^n$ and $\bm u \in [0,1]^d$, let $H$ be the d.f.\ corresponding to $\nu_{\bm u}^{\bm x}$, let $D$ be a copula of $H$ and let $H_1,\dots,H_d$ be the $d$ univariate margins of $H$. Assume for instance first that $D$ is an absolutely continuous copula. Then, using the inverse of the well-known transformation of \citet{Ros52}, it is possible to obtain, from the first $d$ components of $\bm U$, a $d$-dimensional random vector $\bm V$ with d.f.\ $D$ \citep[see, e.g.,][Section 2.7 for more details]{HofKojMaeYan18}. The random vector $\bm W_{\bm u}^{\bm x}$ can then be defined by $(H_1^{-1}(V_1), \dots, H_d^{-1}(V_d))$, where $H_1^{-1},\dots,H_d^{-1}$ are the quantile functions (generalized inverses) obtained from the $d$ univariate margins of $H$. As another example, assume that $D$ is the empirical beta copula obtained from the data set $\bm x \in (\R^d)^n$ whose component samples contain no ties. Let $r_{ij}$, $i \in \{1,\dots,n\}$, $j \in \{1,\dots,d\}$, be the corresponding multivariate ranks. Then, according to \cite[Section 3.2]{KirSegTsu21}, a $d$-dimensional random vector $\bm V$ with d.f.\ $D$ can be obtained by computing $I = \ip{n U_{d+1}} + 1$ and $V_j = \Beta_{n,r_{Ij}}^{-1}(U_j)$, $j \in \{1,\dots,d\}$, where, as in~\eqref{eq:C:1n:beta}, $\Beta_{n,q}$ denotes the d.f.\ of the distribution Beta$(q, n+1-q)$. Finally, the random vector $\bm W_{\bm u}^{\bm x}$ can again be defined by $(H_1^{-1}(V_1), \dots, H_d^{-1}(V_d))$. More generally, as soon as there exists a method to generate random variates from the copula $D$, it is likely that we will be able to define the function $\Wc_{\bm u}^{\bm x}$ transforming $\bm U$ into the random vector $\bm W_{\bm u}^{\bm x}$ with law $\nu_{\bm u}^{\bm x}$. 
\end{remark}

\begin{remark} 
  \label{rem:Bernstein}
  Let $n, m_1, \dots, m_d \in \N$. For any $\bm x \in (\R^d)^n$ and $\bm u \in [0,1]^d$, let $\nu_{\bm u}^{\bm x}$ be the law of the random vector $(S_{m_1,1,u_1}/m_1,\dots, S_{m_d,d,u_d}/m_d)$, where $S_{m_1,1,u_1},\dots S_{m_d,d,u_d}$ are independent random variables and, for each $j \in \{1,\dots d\}$, $S_{m_j,j,u_j}$ is Binomial$(m_j,u_j)$. Then, from Section~3 in \cite{SegSibTsu17}, $C_{1:n}^\nu$ corresponds to the empirical Bernstein copula with polynomial degrees $m_1, \dots, m_d$. Since, from Lemma 2.6 in \cite{SegSibTsu17}, the empirical beta copula is the empirical Bernstein copula with $m_1 = n, \dots, m_d = n$, it is obviously a particular case of $C_{1:n}^\nu$. It is obtained when, for any $\bm x \in (\R^d)^n$ and $\bm u \in [0,1]^d$, $\nu_{\bm u}^{\bm x}$ is defined as the measure $\mu_{n,\bm u}$ appearing in~\eqref{eq:C:1n:beta:mix} which is the law of the random vector $(S_{n,1,u_1}/n,\dots, S_{n,d,u_d}/n)$. 
\end{remark}

As we shall see in Section~\ref{sec:specific}, when defining some specific members of the above general class of smooth copulas, we will consider the possibly random smoothing distributions $\nu_{\bm u}^{\sss \Xc_{1:n}}$, $\bm u \in [0,1]^d$, to be either all discrete or all continuous with the understanding that, because of~\eqref{eq:var:W}, if $u_j \in \{0,1\}$, the $j$th component of $\bm W_{\bm u}^{\sss \Xc_{1:n}}$ will be degenerate (non-random). 

For any $n \in \N$, $\bm x \in (\R^d)^n$, $\bm r \in [0,n]^d$ and $\bm u \in [0,1]^d$, let
\begin{equation}
  \label{eq:K}
  \Kc_{\bm r}^{\bm x}(\bm u) =  \int_{[0,1]^d} \1(\bm r / n \leq \bm w) \dd\nu_{\bm u}^{\bm x}(\bm w) = \Ex \left \{ \1(\bm r / n \leq \bm W_{\bm u}^{\bm x}) \right\} = \Gc_{\bm u}^{\bm x}(\bm r / n),
\end{equation}
where $\Gc_{\bm u}^{\bm x}(\bm w) = \Pr(\bm W_{\bm u}^{\bm x} \geq \bm w)$, $\bm w \in [0,1]^d$. Note that this implies that, almost surely, $\Gc_{\bm u}^{\sss \Xc_{1:n}}(\bm w) = \Pr(\bm W_{\bm u}^{\sss \Xc_{1:n}} \geq \bm w \mid \Xc_{1:n})$, $\bm w \in [0,1]^d$. By linearity of the integral, with  probability 1, we can then express $C_{1:n}^\nu$ as
\begin{equation}
  \label{eq:C:1n:K}
C_{1:n}^\nu(\bm u) = \frac{1}{n} \sum_{i=1}^n \Kc_{\sss{\bm R^{1:n}_i}}^{\sss \Xc_{1:n}}(\bm u), \qquad \bm u \in [0,1]^d.
\end{equation}


\section{Properties and general form of the smooth estimators}
\label{sec:properties}

In this section, we provide conditions under which smooth empirical copulas of the form~\eqref{eq:C:1n:nu} have standard uniform univariate margins and then conditions under which they are multivariate d.f.s.

\subsection{Univariate margins of the smooth estimators}

We start by investigating the univariate margins of the studied nonparametric copula estimators. As already hinted at in the introduction, the following simple condition plays an important role.

\begin{cond}[No ties]
\label{cond:no:ties}
With  probability 1, there are no ties in each of the component samples $X_{1j}, \dots, X_{nj}$, $j \in \{1,\dots,d\}$, of $\Xc_{1:n}$.
\end{cond}

As verified in \cite{SegSibTsu17}, under this condition, the empirical beta copula $C_{1:n}^\Beta$ defined in~\eqref{eq:C:1n:beta} has standard uniform margins. In the rest of the paper, for any $j \in \{1,\dots,d\}$ and any $\bm{u} \in [0, 1]^d$, let $\bm{u}^{(j)}$ be the vector of $[0, 1]^d$ defined by $u^{(j)}_i = u_j$ if $i = j$ and 1 otherwise. With this notation, the property of having standard uniform univariate margins can be simply written as $C_{1:n}^\Beta(\bm u^{(j)}) = u_j$ for all $j \in \{1,\dots,d\}$ and $\bm u \in [0,1]^d$. For the smooth empirical copula $C_{1:n}^\nu$ defined in~\eqref{eq:C:1n:nu}, we have the following result proven in Appendix~\ref{proofs:short}.

\begin{prop}[Univariate margins of $C_{1:n}^\nu$]
  \label{prop:unif:margin}
  Under Condition~\ref{cond:no:ties}, for any $j \in \{1,\dots,d\}$ and $\bm u \in [0,1]^d$, with  probability 1,
  $$
  C_{1:n}^\nu(\bm u^{(j)}) =  \Ex \left(  \frac{\ip{n W_{j,u_j}^{\sss \Xc_{1:n}}}}{n} \mid \Xc_{1:n} \right).
  $$
\end{prop}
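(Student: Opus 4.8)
The plan is to evaluate the definition~\eqref{eq:C:1n:nu} at the boundary point $\bm u^{(j)}$ and to exploit the product structure of the empirical copula $C_{1:n}$ together with the degeneracy of the smoothing distribution at the coordinates pinned to~$1$. Using the interpretation of the integral against $\nu_{\bm u^{(j)}}^{\sss \Xc_{1:n}}$ as a conditional expectation recorded just after Condition~\ref{cond:construction}, I would first write, with probability~$1$,
$$
C_{1:n}^\nu(\bm u^{(j)}) = \Ex\bigl\{ C_{1:n}(\bm W_{\bm u^{(j)}}^{\sss \Xc_{1:n}}) \mid \Xc_{1:n}\bigr\},
$$
and then substitute the explicit form~\eqref{eq:C:1n} of $C_{1:n}$ evaluated at the random argument $\bm W_{\bm u^{(j)}}^{\sss \Xc_{1:n}}$.

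Next I would collapse the off-diagonal factors. For $k \neq j$ the relevant mean is $u_k^{(j)} = 1$, so~\eqref{eq:var:W} gives $\Var(W_{k,1}^{\sss \Xc_{1:n}}) = 0$; hence the $k$th component of $\bm W_{\bm u^{(j)}}^{\sss \Xc_{1:n}}$ equals its mean~$1$ almost surely (conditionally on $\Xc_{1:n}$). Since ranks never exceed $n$, one has $R_{ik}^{1:n}/n \leq 1 = W_{k,1}^{\sss \Xc_{1:n}}$, so each indicator $\1(R_{ik}^{1:n}/n \leq W_{k,u_k^{(j)}}^{\sss \Xc_{1:n}})$ with $k \neq j$ equals~$1$ almost surely. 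This leaves, with probability~$1$,
$$
C_{1:n}(\bm W_{\bm u^{(j)}}^{\sss \Xc_{1:n}}) = \frac{1}{n}\sum_{i=1}^n \1\bigl(R_{ij}^{1:n}/n \leq W_{j,u_j}^{\sss \Xc_{1:n}}\bigr) = \frac{1}{n}\sum_{i=1}^n \1\bigl(R_{ij}^{1:n} \leq n W_{j,u_j}^{\sss \Xc_{1:n}}\bigr).
$$

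The final step is a counting argument that invokes Condition~\ref{cond:no:ties}. Under the no-ties condition, with probability~$1$ the marginal ranks $R_{1j}^{1:n}, \dots, R_{nj}^{1:n}$ form a permutation of $\{1,\dots,n\}$, so the sum above counts exactly the integers $r \in \{1,\dots,n\}$ with $r \leq n W_{j,u_j}^{\sss \Xc_{1:n}}$. Because $W_{j,u_j}^{\sss \Xc_{1:n}} \in [0,1]$, and hence $n W_{j,u_j}^{\sss \Xc_{1:n}} \in [0,n]$, this count is precisely $\ip{n W_{j,u_j}^{\sss \Xc_{1:n}}}$, whence $C_{1:n}(\bm W_{\bm u^{(j)}}^{\sss \Xc_{1:n}}) = \ip{n W_{j,u_j}^{\sss \Xc_{1:n}}}/n$ almost surely. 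Taking conditional expectation given $\Xc_{1:n}$ then yields the stated identity.

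I expect the only genuinely delicate point to be the simultaneous handling of the several ``almost sure'' statements under the conditioning on $\Xc_{1:n}$: the vector $\bm W_{\bm u^{(j)}}^{\sss \Xc_{1:n}}$ is data-adaptive, so the degeneracy of its off-diagonal components and the no-ties permutation property must each be asserted conditionally on $\Xc_{1:n}$ and then combined on a common probability-one event before the conditional expectation is taken. Everything else reduces to the elementary fact that the number of positive integers not exceeding a nonnegative real $x$ equals $\ip{x}$.
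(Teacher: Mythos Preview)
Your proof is correct and follows essentially the same approach as the paper: both use the degeneracy of the off-$j$ components implied by~\eqref{eq:var:W}, reduce to the single-coordinate sum, invoke Condition~\ref{cond:no:ties} to replace the ranks $R_{ij}^{1:n}$ by the integers $\{1,\dots,n\}$, and finish with the elementary count $\sum_{i=1}^n \1(i\le x)=\ip{x}$. The only cosmetic difference is that the paper works via the representation~\eqref{eq:C:1n:K} and swaps the rank-to-integer reindexing with the conditional expectation, whereas you evaluate $C_{1:n}(\bm W_{\bm u^{(j)}}^{\sss \Xc_{1:n}})$ pointwise before integrating; the underlying argument is the same.
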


Since $\sup_{w \in [0,1]} | \ip{n w} / n  - w| \to 0$ as $n \to \infty$ and $\Ex \left(  W_{j,u_j}^{\sss \Xc_{1:n}} \mid \Xc_{1:n} \right) = u_j$ almost surely by construction, under Condition~\ref{cond:no:ties}, a smooth empirical copula $C_{1:n}^\nu$ will at least have standard uniform margins asymptotically. Actually, it is easy to verify that, under Condition~\ref{cond:no:ties}, $C_{1:n}^\nu$ will have standard uniform margins if and only if the following condition is satisfied.

\begin{cond}[Condition for uniform margins]
  \label{cond:unif:marg}
For any $\bm x \in (\R^d)^n$, $\bm u \in [0,1]^d$ and $j \in \{1,\dots,d\}$, $W_{j,u_j}^{\bm x}$ takes its values in the set $\{0,1/n,\dots,(n-1)/n,1\}$.
\end{cond}

The previous condition is for instance satisfied when, for any $\bm x \in (\R^d)^n$, $\bm u \in [0,1]^d$ and $j \in \{1,\dots,d\}$, $W_{j,u_j}^{\bm x} = S_{n,j,u_j} / n$, where $S_{n,j,u_j}$ is Binomial$(n,u_j)$. Notice that, when $u_j \in (0,1)$ and $W_{j,u_j}^{\sss \Xc_{1:n}}$ does not take all its values in the set $\{0,1/n,\dots,(n-1)/n,1\}$, one has  with  probability 1 that
\begin{equation*}
  C_{1:n}^\nu(\bm u^{(j)}) = \Ex \left(  \frac{\ip{n W_{j,u_j}^{\sss \Xc_{1:n}}}}{n} \mid \Xc_{1:n} \right)  < \Ex \left(  W_{j,u_j}^{\sss \Xc_{1:n}} \mid \Xc_{1:n} \right) = u_j
\end{equation*}
since $\ip{nw} / n < w$ for all $w \in [0,1] \setminus \{0,1/n,\dots,(n-1)/n,1\}$. Hence, in that case, from a marginal perspective, $C_{1:n}^\nu$ will systematically underestimate $C$.

One possible remedy to this situation is to carry out an asymptotically negligible correction consisting of using $\Kc_{\sss{\bm R_i^{1:n} - \mathbf{a}}}^{\sss \Xc_{1:n}}$ instead of $\Kc_{\sss{\bm R_i^{1:n}}}^{\sss \Xc_{1:n}}$ in~\eqref{eq:C:1n:K}, where $\mathbf{a} = (a,\dots,a) \in (0,1)^d$. Indeed, mimicking the proof of Proposition~\ref{prop:unif:margin}, one obtains that, under Condition~\ref{cond:no:ties}, for any $j \in \{1,\dots,d\}$ and $\bm u \in [0,1]^d$, $C_{1:n}^\nu(\bm u^{(j)}) = \Ex (\ip{n W_{j,u_j}^{\sss \Xc_{1:n}} + a} / n \mid \Xc_{1:n})$ almost surely. It then seems reasonable to choose $a = 1/2$ given that $\sup_{w \in [0,1]} |\ip{n w + a} / n - w| = \max(a, 1 - a)/n$ is minimized for $a = 1/2$ and $\Ex (W_{j,u_j}^{\sss \Xc_{1:n}} \mid \Xc_{1:n}) = u_j$ almost surely by construction. For this reason, from now on, when using smoothing distributions for which Condition~\ref{cond:unif:marg} is not satisfied, instead of $C_{1:n}^\nu$ in~\eqref{eq:C:1n:K}, we will consider the asymptotically equivalent estimator defined by
\begin{equation}
  \label{eq:C:1n:K:cont}
C_{1:n}^{\nu,\text{cor}}(\bm u) = \frac{1}{n} \sum_{i=1}^n \Kc_{\bm R^{1:n}_i - \mathbf{1/2}}^{\sss \Xc_{1:n}}(\bm u), \qquad \bm u \in [0,1]^d.
\end{equation}

\begin{remark}
Some additional thinking reveals that~\eqref{eq:C:1n:K:cont} would have been equivalently obtained if, in~\eqref{eq:C:1n:nu}, the classical empirical copula $C_{1:n}$ in~\eqref{eq:C:1n} were replaced by the empirical d.f.\ of the modified pseudo-observations $\tilde{\bm U}^{1:n}_i = (\bm R^{1:n}_i - \mathbf{1/2})/n$, $i \in \{1,\dots,n\}$. The latter asymptotically equivalent definition of the empirical copula was for instance considered in \cite[Section~5.10.1]{Joe14} and has univariate margins that are uniformly closer to the d.f.\ of the standard uniform distribution than $C_{1:n}$ in~\eqref{eq:C:1n}.  
\end{remark}

\subsection{General form of the smooth estimators}

As already mentioned, we will choose the possibly random smoothing distributions $\nu_{\bm u}^{\sss \Xc_{1:n}}$, $\bm u \in [0,1]^d$, to be either all discrete or all continuous with the understanding that, because of~\eqref{eq:var:W}, if $u_j \in \{0,1\}$, the $j$th component of $\bm W_{\bm u}^{\sss \Xc_{1:n}}$ will be degenerate. In the discrete case, we will further impose Condition~\ref{cond:unif:marg} so that, from Proposition~\ref{prop:unif:margin}, the corresponding smooth empirical copulas of the form~\eqref{eq:C:1n:K} have standard uniform margins. This property will not hold if the smoothing distributions are chosen continuous. Considering in that case the asymptotically equivalent definition in~\eqref{eq:C:1n:K:cont} will however make the uniform distance between the univariate margins of the smooth estimator and the d.f.\ of the standard uniform distribution smaller than~$1/(2n)$. 

Notice that the expressions of $C_{1:n}^\nu$ in~\eqref{eq:C:1n:K} and $C_{1:n}^{\nu,\text{cor}}$ in~\eqref{eq:C:1n:K:cont} both depend on the quantity $\Kc_{\bm r}^{\bm x}$, $\bm x \in (\R^d)^n$, $\bm r \in [0,n]^d$, defined in~\eqref{eq:K}. Specifically, recall that, for any $\bm x \in (\R^d)^n$, $\bm r \in [0,n]^d$ and $\bm u \in [0,1]^d$, $\Kc_{\bm r}^{\bm x}(\bm u) = \Gc_{\bm u}^{\bm x}(\bm r / n)$, where $\Gc_{\bm u}^{\bm x}(\bm w) = \Pr(\bm W_{\bm u}^{\bm x} \geq \bm w)$, $\bm w \in [0,1]^d$. For any $\bm x \in (\R^d)^n$ and $\bm u \in [0,1]^d$, let $\bar \Fc_{\bm u}^{\bm x}(\bm w) = \Pr(\bm W_{\bm u}^{\bm x} > \bm w)$, $\bm w \in [0,1]^d$, be the survival function of $\bm W_{\bm u}^{\bm x}$ and note that $\Gc_{\bm u}^{\bm x}(\bm w)= \bar \Fc_{\bm u}^{\bm x}(\bm w)$, $\bm w \in [0,1]^d$, if $\bm W_{\bm u}^{\bm x}$ is continuous and that $\Gc_{\bm u}^{\bm x}(\bm r / n)= \bar \Fc_{\bm u}^{\bm x}\{ (\bm r - \bm 1) / n\}$, $\bm r \in \{1,\dots,n\}^d$, where $\1 = (1,\dots,1) \in \R^d$ if $\bm W_{\bm u}^{\bm x}$ takes its values in the set $\{0,1/n,\dots,(n-1)/n,1\}^d$. Interestingly enough, for any $\bm x \in (\R^d)^n$ and $\bm u \in [0,1]^d$, we can additionally use the fact that, from Sklar's theorem for survival functions \cite[see, e.g.,][Section~2.5]{HofKojMaeYan18}, there exists a copula $\bar \Cc_{\bm u}^{\bm x}$ (called a survival copula of~$\bm W_{\bm u}^{\bm x}$) such that
\begin{equation*}
\bar \Fc_{\bm u}^{\bm x}(\bm w) = \bar \Cc_{\bm u}^{\bm x} \{\bar \Fc_{1,u_1}^{\bm x}(w_1), \dots, \bar \Fc_{d,u_d}^{\bm x}(w_d) \}, \qquad \bm w \in [0,1]^d,
\end{equation*}
where, for any $j \in \{1,\dots,d\}$, $\bar \Fc_{j,u_j}^{\bm x}(w) = \Pr(W_{j,u_j}^{\bm x} > w)$, $w \in [0,1]$, is the $j$th univariate margin of $\bar \Fc_{\bm u}^{\bm x}$. Note that $\bar \Cc_{\bm u}^{\bm x}$ is not uniquely defined unless $\bm W_{\bm u}^{\bm x}$ is a continuous random vector and that $\bar \Cc_{\bm u}^{\bm x}$ is not a copula of the random vector $\bm W_{\bm u}^{\bm x}$ but a copula of the random vector $-\bm W_{\bm u}^{\bm x}$.

Combining the previous elements, one has that, if the smoothing distributions satisfy Condition~\ref{cond:unif:marg}, for any $\bm x \in (\R^d)^n$, $\bm r \in [1,n]^d$ and $\bm u \in [0,1]^d$, $\Kc_{\bm r}^{\bm x}(\bm u)$ in~\eqref{eq:K} can be expressed as
\begin{equation}
  \label{eq:K:dis}
  \Kc_{\bm r}^{\bm x}(\bm u) = \bar \Cc_{\bm u}^{\bm x} \left[ \bar \Fc_{1,u_1}^{\bm x}\{ (r_1 - 1) / n\}, \dots, \bar \Fc_{d,u_d}^{\bm x} \{ (r_d - 1) / n \} \right],
\end{equation}
which implies that, with  probability 1, the smooth empirical copula $C_{1:n}^\nu$ in~\eqref{eq:C:1n:K} can be expressed, for any $\bm u \in [0,1]^d$, as
\begin{equation}
  \label{eq:C:1n:dis}
  C_{1:n}^\nu(\bm u) = \frac{1}{n} \sum_{i=1}^n \bar \Cc_{\bm u}^{\sss \Xc_{1:n}} \left[ \bar \Fc_{1,u_1}^{\sss \Xc_{1:n}}\{ (R_{i1}^{1:n} - 1) / n\}, \dots, \bar \Fc_{d,u_d}^{\sss \Xc_{1:n}} \{ (R_{id}^{1:n} - 1) / n \} \right],
\end{equation}
whereas, if the smoothing distributions are continuous, as already mentioned, it is better from a marginal perspective to consider the asymptotically equivalent estimator $C_{1:n}^{\nu,\text{cor}}$ in~\eqref{eq:C:1n:K:cont} which, with  probability 1, can be expressed, for any $\bm u \in [0,1]^d$, as
\begin{equation}
  \label{eq:C:1n:cont}
C_{1:n}^{\nu,\text{cor}}(\bm u) = \frac{1}{n} \sum_{i=1}^n \bar \Cc_{\bm u}^{\sss \Xc_{1:n}} \left[ \bar \Fc_{1,u_1}^{\sss \Xc_{1:n}}\{ (R_{i1}^{1:n} - 1/2) / n\}, \dots, \bar \Fc_{d,u_d}^{\sss \Xc_{1:n}} \{ (R_{id}^{1:n} - 1/2) / n \} \right]
\end{equation}
since, for any $\bm x \in (\R^d)^n$, $\bm r \in [1,n]^d$ and $\bm u \in [0,1]^d$, $\Kc_{\bm r - \mathbf{1/2}}^{\bm x}(\bm u)$ can be expressed as
\begin{equation}
  \label{eq:K:cont}
  \Kc_{\bm r - \mathbf{1/2}}^{\bm x}(\bm u) = \bar \Cc_{\bm u}^{\bm x} \left[ \bar \Fc_{1,u_1}^{\bm x}\{ (r_1 - 1/2) / n\}, \dots, \bar \Fc_{d,u_d}^{\bm x} \{ (r_d - 1/2) / n \} \right].
\end{equation}

\subsection{Conditions for being multivariate d.f.s}

From a finite-sample perspective, it seems desirable to focus on estimators that are multivariate d.f.s and thus, possibly, genuine copulas. The following conditions are sufficient for that matter.

\begin{cond}[Condition on the smoothing survival copulas]
  \label{cond:smooth:cop}
  For any $\bm x \in (\R^d)^n$ and $\bm u \in [0,1]^d$, the copulas $\bar \Cc_{\bm u}^{\bm x}$ in~\eqref{eq:K:dis} and~\eqref{eq:K:cont} do not depend on $\bm u$.
\end{cond}

\begin{cond}[Condition on the discrete smoothing survival margins]
  \label{cond:smooth:surv:marg}
  For any $\bm x \in (\R^d)^n$, $j \in \{1,\dots,d\}$ and $w \in [0,1)$, the function $t \mapsto \bar \Fc_{j,t}^{\bm x}(w)$ is right-continuous and increasing on $[0,1]$.
\end{cond}


The following result is proven in Appendix~\ref{proofs:short}.

\begin{prop}[$C_{1:n}^\nu$ is a multivariate d.f.]
  \label{prop:mult:df}
Assume that Conditions~\ref{cond:smooth:cop} and~\ref{cond:smooth:surv:marg} hold. Then, the smooth empirical copula $C_{1:n}^\nu$ in~\eqref{eq:C:1n:dis} is a multivariate d.f. 
\end{prop}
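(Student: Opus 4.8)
The plan is to prove the result via an explicit probabilistic representation, after reducing the claim to a single summand of~\eqref{eq:C:1n:dis}. Since a finite average of multivariate d.f.s is again a multivariate d.f.\ (it is the d.f.\ of the corresponding uniform mixture of the underlying probability measures), it suffices to fix $i \in \{1,\dots,n\}$ and show that the $i$th summand, viewed as a function of $\bm u \in [0,1]^d$, is a multivariate d.f.\ on $[0,1]^d$. Throughout I would work conditionally on $\Xc_{1:n} = \bm x$, so that the ranks $r_{ij} = R_{ij}^{1:n} \in \{1,\dots,n\}$ are fixed. By Condition~\ref{cond:smooth:cop}, the survival copula $\bar\Cc_{\bm u}^{\bm x}$ does not depend on $\bm u$; write $\bar\Cc^{\bm x}$ for it. Setting $w_j = (r_{ij}-1)/n \in [0,1)$ and $g_j(t) = \bar\Fc_{j,t}^{\bm x}(w_j)$ for $t \in [0,1]$, the $i$th summand is the map $\bm u \mapsto \bar\Cc^{\bm x}\{g_1(u_1),\dots,g_d(u_d)\}$, and by Condition~\ref{cond:smooth:surv:marg} each $g_j$ is non-decreasing and right-continuous on $[0,1]$ with values in $[0,1]$.

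Next I would realize this composition as a genuine d.f. Let $\bm Y = (Y_1,\dots,Y_d)$ be a random vector whose d.f.\ is the copula $\bar\Cc^{\bm x}$, so that $\bar\Cc^{\bm x}\{g_1(u_1),\dots,g_d(u_d)\} = \Pr(Y_1 \le g_1(u_1),\dots,Y_d \le g_d(u_d))$. For each $j$, define the generalized inverse $\psi_j(y) = \inf\{t \in [0,1] : g_j(t) \ge y\}$ (with $\inf\emptyset = +\infty$). The key step is the elementary equivalence
\begin{equation*}
  g_j(t) \ge y \iff \psi_j(y) \le t, \qquad t, y \in [0,1],
\end{equation*}
which holds because $g_j$ is non-decreasing (so $\{t : g_j(t) \ge y\}$ is an up-set) and right-continuous (so this up-set is attained at its infimum, i.e.\ of the form $[\psi_j(y),1]$). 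Applying it with $t = u_j$ and $y = Y_j$ gives the pointwise event identity $\{Y_j \le g_j(u_j)\} = \{\psi_j(Y_j) \le u_j\}$ for every $u_j \in [0,1]$, with no assumption on the joint law of $\bm Y$; intersecting over $j$ yields $\{\bm Y \le (g_1(u_1),\dots,g_d(u_d))\} = \{\bm Z \le \bm u\}$, where $\bm Z = (\psi_1(Y_1),\dots,\psi_d(Y_d))$. Hence the $i$th summand equals $\Pr(\bm Z \le \bm u)$, the d.f.\ of $\bm Z$; as such, it automatically inherits all defining properties of a multivariate d.f.\ ($d$-increasing, right-continuous, grounded), which is the advantage of this probabilistic route over a direct verification of the rectangle inequalities.

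Finally, I would confirm that $\bm Z$ takes its values in $[0,1]^d$ and that the resulting object is a proper d.f.\ on $[0,1]^d$. Because $W_{j,0}^{\bm x}$ and $W_{j,1}^{\bm x}$ are degenerate at $0$ and $1$ respectively (by~\eqref{eq:var:W}), one has $g_j(0) = \Pr(W_{j,0}^{\bm x} > w_j) = 0$ and $g_j(1) = \Pr(W_{j,1}^{\bm x} > w_j) = 1$ since $0 \le w_j < 1$. The second identity forces $\psi_j(Y_j) \le 1$ almost surely, so $\bm Z \in [0,1]^d$, while $g_j(0) = 0$ together with groundedness of the copula $\bar\Cc^{\bm x}$ makes each summand vanish whenever some $u_j = 0$ and equal $1$ at $\bm u = \bm 1$. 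Averaging over $i$ then shows that $C_{1:n}^\nu$ is a multivariate d.f.\ on $[0,1]^d$.

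I expect the main obstacle to be the generalized-inverse equivalence of the second paragraph: this is precisely where the right-continuity part of Condition~\ref{cond:smooth:surv:marg} is indispensable, since monotonicity alone yields only one of the two implications, and some care is needed at the endpoints $y \in \{0,1\}$ and when the relevant superlevel set is empty. The remaining ingredients — the reduction to a single summand, the mixture argument, and the boundary normalization — are routine.
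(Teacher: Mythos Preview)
Your argument is correct, and it takes a genuinely different route from the paper. The paper verifies directly the four characterising properties of a multivariate d.f.\ (groundedness, normalisation at $\bm 1$, right-continuity in each argument, $d$-increasingness) by observing that each summand is the composition of the copula $\bar\Cc^{\bm x}$ with coordinatewise increasing, right-continuous maps; the $d$-increasingness step then rests on the well-known but not entirely trivial fact that such compositions preserve the rectangle inequality. Your approach instead manufactures an explicit random vector $\bm Z = (\psi_1(Y_1),\dots,\psi_d(Y_d))$ whose d.f.\ coincides with the summand, so that all four properties are inherited for free. The cost is the generalized-inverse equivalence $\{g_j(t) \ge y\} \Leftrightarrow \{\psi_j(y) \le t\}$, which is exactly where right-continuity in Condition~\ref{cond:smooth:surv:marg} enters for you; the paper uses that same hypothesis to obtain right-continuity of $C_{1:n}^\nu$ in each argument directly. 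Your construction is arguably more transparent (it exhibits a probability measure rather than checking axioms), while the paper's verification is a touch more self-contained in that it does not introduce auxiliary random objects.
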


\begin{cor}[$C_{1:n}^\nu$ is a genuine copula]
  \label{cor:copula}
Assume that Conditions~\ref{cond:no:ties},~\ref{cond:unif:marg},~\ref{cond:smooth:cop} and~\ref{cond:smooth:surv:marg} hold. Then, the smooth empirical copula $C_{1:n}^\nu$ in~\eqref{eq:C:1n:nu} is a genuine copula.
\end{cor}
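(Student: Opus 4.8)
The plan is to verify directly the two defining properties of a copula: that $C_{1:n}^\nu$ is a multivariate distribution function and that it has standard uniform univariate margins. Since a $d$-dimensional d.f.\ with standard uniform margins is, by definition, a copula, establishing these two facts (with probability~1, under Condition~\ref{cond:no:ties}) will suffice, and the corollary is essentially an assembly of Propositions~\ref{prop:unif:margin} and~\ref{prop:mult:df}.

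First I would note that Condition~\ref{cond:unif:marg} places us in the discrete setting in which, with probability~1, the estimator $C_{1:n}^\nu$ of~\eqref{eq:C:1n:nu} admits the representation~\eqref{eq:C:1n:dis} in terms of the survival copulas $\bar\Cc_{\bm u}^{\sss \Xc_{1:n}}$ and the survival margins $\bar\Fc_{j,u_j}^{\sss \Xc_{1:n}}$. Conditions~\ref{cond:smooth:cop} and~\ref{cond:smooth:surv:marg} are then precisely the hypotheses of Proposition~\ref{prop:mult:df}, so that result yields that $C_{1:n}^\nu$ is, with probability~1, a multivariate d.f.

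Second, for the margins I would invoke Proposition~\ref{prop:unif:margin}, which under Condition~\ref{cond:no:ties} gives $C_{1:n}^\nu(\bm u^{(j)}) = \Ex(\ip{n W_{j,u_j}^{\sss \Xc_{1:n}}}/n \mid \Xc_{1:n})$ almost surely. Under Condition~\ref{cond:unif:marg} the variable $W_{j,u_j}^{\sss \Xc_{1:n}}$ takes its values in $\{0,1/n,\dots,(n-1)/n,1\}$, so $\ip{n W_{j,u_j}^{\sss \Xc_{1:n}}}/n = W_{j,u_j}^{\sss \Xc_{1:n}}$, and the conditional-mean constraint $\Ex(W_{j,u_j}^{\sss \Xc_{1:n}}\mid \Xc_{1:n}) = u_j$ built into the definition of the smoothing laws gives $C_{1:n}^\nu(\bm u^{(j)}) = u_j$ for every $j \in \{1,\dots,d\}$ and every $\bm u \in [0,1]^d$, that is, standard uniform univariate margins (this is the ``if'' direction of the equivalence noted right after Proposition~\ref{prop:unif:margin}). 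Combining this with the previous paragraph, $C_{1:n}^\nu$ is almost surely a multivariate d.f.\ with standard uniform margins, hence a genuine copula.

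I do not expect a genuine obstacle here; the only point requiring care is the bookkeeping, namely checking that Condition~\ref{cond:unif:marg} simultaneously (i)~puts $C_{1:n}^\nu$ into the form~\eqref{eq:C:1n:dis} needed to apply Proposition~\ref{prop:mult:df} and (ii)~collapses the floor function in Proposition~\ref{prop:unif:margin} so that the margins are \emph{exactly}, not merely asymptotically, uniform, and that all statements hold on the same probability-one event on which Condition~\ref{cond:no:ties} is realized.
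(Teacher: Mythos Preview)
Your proposal is correct and matches the paper's intent exactly: the corollary is stated without a separate proof precisely because it is an immediate combination of Proposition~\ref{prop:mult:df} (multivariate d.f.\ under Conditions~\ref{cond:smooth:cop} and~\ref{cond:smooth:surv:marg}) with Proposition~\ref{prop:unif:margin} together with Condition~\ref{cond:unif:marg} (standard uniform margins under Condition~\ref{cond:no:ties}). Your bookkeeping remarks about Condition~\ref{cond:unif:marg} simultaneously enabling the form~\eqref{eq:C:1n:dis} and collapsing the floor function are exactly the right checks.
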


In the case of continuous smoothing distributions, we consider the following analog of Condition~\ref{cond:smooth:surv:marg}.

\begin{cond}[Condition on the continuous smoothing survival margins]
  \label{cond:smooth:surv:marg:cont}
  For any $\bm x \in (\R^d)^n$, $j \in \{1,\dots,d\}$ and $w \in (0,1)$, the function $t \mapsto \bar \Fc_{j,t}^{\bm x}(w)$ is right-continuous and increasing on $[0,1]$.
\end{cond}

The following result is proven in Appendix~\ref{proofs:short}.

\begin{prop}[$C_{1:n}^{\nu,\text{cor}}$ is a multivariate d.f.]
  \label{prop:mult:df:cont}
Assume that Conditions~\ref{cond:smooth:cop} and~\ref{cond:smooth:surv:marg:cont} hold. Then, the smooth empirical copula $C_{1:n}^{\nu,\text{cor}}$ in~\eqref{eq:C:1n:cont} is a multivariate d.f.
\end{prop}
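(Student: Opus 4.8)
The plan is to first invoke Condition~\ref{cond:smooth:cop}, which removes the dependence of the survival copula on $\bm u$ and lets us write, with probability $1$,
$$
C_{1:n}^{\nu,\text{cor}}(\bm u) = \frac1n\sum_{i=1}^n \bar\Cc^{\sss \Xc_{1:n}}\!\left[\bar\Fc_{1,u_1}^{\sss \Xc_{1:n}}\{(R_{i1}^{1:n}-1/2)/n\},\dots,\bar\Fc_{d,u_d}^{\sss \Xc_{1:n}}\{(R_{id}^{1:n}-1/2)/n\}\right].
$$
Since a convex combination of multivariate d.f.s is again a multivariate d.f.\ (it is the d.f.\ of the mixture that selects its $i$th component with probability $1/n$), it suffices to fix the index $i$ and a realization $\bm x$ of $\Xc_{1:n}$ with ranks $r_{ij}$, to set $w_{ij} := (r_{ij}-1/2)/n$ and $g_j(u_j) := \bar\Fc_{j,u_j}^{\bm x}(w_{ij})$, and to prove that $\bm u\mapsto \bar\Cc^{\bm x}\{g_1(u_1),\dots,g_d(u_d)\}$ is a multivariate d.f.\ on $[0,1]^d$.

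Next I would collect the properties of the inner functions $g_j$. Since $r_{ij}\in\{1,\dots,n\}$, one has $w_{ij}\in(0,1)$ — precisely the range over which Condition~\ref{cond:smooth:surv:marg:cont} applies — so each $g_j:[0,1]\to[0,1]$ is right-continuous and nondecreasing. Moreover, by~\eqref{eq:var:W} the variables $W_{j,0}^{\bm x}$ and $W_{j,1}^{\bm x}$ are degenerate at $0$ and $1$, respectively; combined with $0<w_{ij}<1$ this gives $g_j(0)=\Pr(0>w_{ij})=0$ and $g_j(1)=\Pr(1>w_{ij})=1$. As $\bar\Cc^{\bm x}$ is a copula, it is grounded and satisfies $\bar\Cc^{\bm x}(1,\dots,1)=1$, so the summand vanishes whenever some $u_j=0$ and equals $1$ at $\bm u=\bm 1$.

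The crux is to show that composing the copula $\bar\Cc^{\bm x}$ with the nondecreasing, right-continuous margins $g_j$ yields a genuine d.f. I would argue probabilistically: let $(V_1,\dots,V_d)$ have d.f.\ $\bar\Cc^{\bm x}$, so that $V_j\in[0,1]$ almost surely, and define the generalized inverses $g_j^{-1}(v):=\inf\{u\in[0,1]:g_j(u)\ge v\}$, which are well defined on $[0,1]$ because $g_j(1)=1$. Right-continuity and monotonicity of $g_j$ yield the Galois equivalence $g_j^{-1}(v)\le u \iff v\le g_j(u)$, hence the event identity $\{V_j\le g_j(u_j)\}=\{g_j^{-1}(V_j)\le u_j\}$ for every fixed $u_j$. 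Writing $Y_j:=g_j^{-1}(V_j)\in[0,1]$, it follows that
$$
\bar\Cc^{\bm x}\{g_1(u_1),\dots,g_d(u_d)\} = \Pr\Big(\bigcap_{j=1}^d\{V_j\le g_j(u_j)\}\Big) = \Pr(Y_1\le u_1,\dots,Y_d\le u_d),
$$
which is manifestly the joint d.f.\ of the $[0,1]^d$-valued vector $(Y_1,\dots,Y_d)$. Thus each summand is a multivariate d.f., and averaging over $i$ concludes the proof.

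The step I expect to require the most care is this composition lemma: verifying the generalized-inverse equivalence, where right-continuity of $g_j$ is exactly what makes the defining infimum attained and thereby secures the delicate ``$\Rightarrow$'' direction, and checking the measurability and $[0,1]$-valuedness of the $Y_j$. By contrast, groundedness, the corner values, and the reduction via mixtures are routine. The argument is the continuous counterpart of the one behind Proposition~\ref{prop:mult:df}; the only substantive difference is that the $1/2$-shift places every $w_{ij}$ strictly inside $(0,1)$, which is why the weaker Condition~\ref{cond:smooth:surv:marg:cont}, imposed only for $w\in(0,1)$, is enough here.
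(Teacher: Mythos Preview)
Your proof is correct and takes a genuinely different route from the paper's. The paper (which treats Proposition~\ref{prop:mult:df} explicitly and says the argument for Proposition~\ref{prop:mult:df:cont} is analogous) verifies directly the four defining properties of a multivariate d.f.\ from Theorem~1.2.11 of Durante--Sempi: groundedness, normalization at $\bm 1$, right-continuity in each argument, and $d$-increasingness. The last property is obtained by observing that the $H$-volume $\Delta_{(\bm a,\bm b]}\Kc_{\bm R_i^{1:n}}^{\sss \Xc_{1:n}}$ is nonnegative because $\bar\Cc^{\bm x}$ is $d$-increasing and each coordinate map $u_j\mapsto \bar\Fc_{j,u_j}^{\bm x}(w_{ij})$ is nondecreasing. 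You instead give a probabilistic construction: exhibit $(Y_1,\dots,Y_d)=(g_1^{-1}(V_1),\dots,g_d^{-1}(V_d))$ and identify each summand as its joint d.f., which yields all four properties at once. Your approach is arguably more conceptual and supplies a concrete stochastic interpretation of each summand, at the price of the care you rightly flag around the Galois equivalence for right-continuous nondecreasing $g_j$; the paper's axiom-checking is more self-contained but leaves the ``some thought reveals'' step for $d$-increasingness to the reader. Both hinge on the same observation that the $1/2$-shift forces $w_{ij}\in(0,1)$, which is exactly why the weaker Condition~\ref{cond:smooth:surv:marg:cont} suffices here.
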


\section{Estimators based on smoothing distributions with scaled binomial, scaled beta-binomial or beta margins}
\label{sec:specific}

In order to define specific smooth empirical copulas of the form~\eqref{eq:C:1n:dis} or~\eqref{eq:C:1n:cont} that are multivariate d.f.s, we start by making three proposals for the smoothing survival margins. We first make two proposals for the (discrete) smoothing survival margins of the estimator~\eqref{eq:C:1n:dis} for which Condition~\ref{cond:smooth:surv:marg} is satisfied and then one proposal for the (continuous) smoothing survival margins of the estimator~\eqref{eq:C:1n:cont} for which Condition~\ref{cond:smooth:surv:marg:cont} is satisfied. We end this section by discussing the choice of the survival copula of the smoothing distributions.

\subsection{Scaled binomial and beta-binomial smoothing survival margins}
\label{sec:surv:marg:dis}

Having Corollary~\ref{cor:copula} in mind for the estimator~\eqref{eq:C:1n:dis} and given $u \in [0,1]$, we wish to define a random variable $W$ that takes its values in the set $\{0,1/n,\dots,(n-1)/n,1\}$ such that $\Ex(W) = u$. From~\eqref{eq:var:W}, only the case $u \in (0,1)$ actually needs to be dealt with. Following~\cite{SegSibTsu17}, it is natural to attempt to start from a random variable $S$ that takes its values in $\{0,1,\dots,n\}$ and to set $W = S/n$. A first straightforward choice due to~\cite{SegSibTsu17} is to take $S$ to be Binomial$(n,u)$. As already mentioned, this immediately leads to $W$ taking its values in the set $\{0,1/n,\dots,(n-1)/n,1\}$ and satisfying $\Ex(W) = u$ and $\Var(W) = u(1-u)/n$.

As an alternative distribution for the random variable $S$ taking its values in $\{0,1,\dots,n\}$, we investigate next the possibility of considering the (more dispersed) Beta-Binomial$(n, \alpha, \beta)$ whose shape parameters $\alpha > 0$ and $\beta > 0$ remain to be specified. Recall that the probability mass function of the latter distribution is given by
\begin{equation*}
  \Pr(S = s) = \binom{n}{s} \frac{B(s + \alpha,n-s+\beta)}{B(\alpha,\beta)}, \qquad s \in \{0,1,\dots,n\}, 
\end{equation*}
where $B$ is the Beta function.

We start from the fact that, since $S$ is Beta-Binomial$(n, \alpha, \beta)$,
\begin{align}
  \label{eq:beta-binomial:ex}
  \Ex(W) &= \frac{\alpha}{\alpha + \beta}, \\
  \label{eq:beta-binomial:var}
  \Var(W) &= \frac{\alpha \beta (\alpha + \beta + n)}{n(\alpha + \beta)^2 (\alpha + \beta + 1)}. 
\end{align}
Since the expectation of $W$ is required to be $u$ by construction, we immediately obtain from~\eqref{eq:beta-binomial:ex} that
\begin{equation}
  \label{eq:beta}
  \beta = \frac{\alpha(1-u)}{u},
\end{equation}
which, combined with~\eqref{eq:beta-binomial:var}, allows us to rewrite the variance of $W$ as
\begin{equation*}
  \Var(W) = \frac{u(1 - u)(\alpha + un)}{n(\alpha + u)} = \frac{u(1 - u)}{n} \times \frac{\alpha + un}{\alpha + u}.
\end{equation*}
The variance of $W$ is thus the variance of $W$ if $S$ were Binomial$(n,u)$ multiplied by the factor $(\alpha + u n)/(\alpha + u)$. Ideally, we would want to control how much more dispersed scaled beta-binomial margins are compared to the corresponding scaled binomial margins. To do so, we set the latter factor to be a constant $\rho > 1$, that is,
$$
\frac{\alpha + un}{\alpha + u} = \rho
$$
and attempt to solve for $\alpha > 0$. Provided that $\rho < n$, we obtain that $\alpha = u(n - \rho) / (\rho - 1)$ and then, using~\eqref{eq:beta}, that $\beta = (1-u)(n - \rho) / (\rho - 1)$.

In summary, as an alternative distribution for the discrete random variable $S$, we consider the Beta-Binomial$\big(n,u(n - \rho)/(\rho - 1), (1-u)(n - \rho)/(\rho - 1) \big)$, where $\rho \in (1,n)$ is an additional parameter. As required, we have that  $\Ex(W) = u$. Furthermore,
\begin{equation}
  \label{eq:beta-binomial:var:2}
  \Var(W) = \rho \times \frac{u(1-u)}{n},
\end{equation}
so that the factor $\rho$ describes how much more dispersed a scaled beta-binomial smoothing survival margin will be compared to the corresponding scaled binomial.

Notice that the probability mass functions of the Binomial$(n,u)$ and the Binomial$(n,1 - u)$ are symmetrical with respect to $n/2$, and that, for any  $\rho \in (1,n)$, the probability mass functions of the Beta-Binomial$\big(n,u(n - \rho)/(\rho - 1), (1-u)(n - \rho)/(\rho - 1) \big)$ and the Beta-Binomial$\big(n,(1-u)(n - \rho)/(\rho - 1), u(n - \rho)/(\rho - 1) \big)$ are also symmetrical with respect to $n/2$. This implies that, with respect to one coordinate, the smoothing around $u$ will be the ``reflection'' of the smoothing around $1-u$.

For any $u \in [0,1]$, let $\bar B_{n,u}$ be the survival function of the Binomial$(n,u)$ and, for any $u \in [0,1]$ and $\rho \in (1,n)$, let $\bar \Bc_{n,u,\rho}$ be the survival function of the Beta-Binomial$\big( n,u(n - \rho)/(\rho - 1), (1-u)(n - \rho)/(\rho - 1) \big)$. Two subclasses of the class of smooth empirical copulas given by~\eqref{eq:C:1n:dis} that, under Condition~\ref{cond:smooth:cop}, still depend on the choice of the possibly data-dependent survival copula $\bar \Cc = \bar \Cc^{\sss \Xc_{1:n}}$, can thus be defined, for any $\bm u \in [0,1]^d$, by
\begin{align}
  \label{eq:ec:Bin:marg}
  C_{1:n}^{\sss{\bar B, \bar \Cc}}(\bm u) = \frac{1}{n} \sum_{i=1}^n \bar \Cc^{\sss \Xc_{1:n}} \left\{ \bar B_{n, u_1}(R_{i1}^{1:n} - 1), \dots, \bar B_{n, u_d}(R_{id}^{1:n} - 1) \right\}, \\
  \label{eq:ec:BetaBin:marg}
  C_{1:n}^{\sss{\bar \Bc, \bar \Cc}}(\bm u) = \frac{1}{n} \sum_{i=1}^n \bar \Cc^{\sss \Xc_{1:n}} \left\{ \bar \Bc_{n, u_1, \rho}(R_{i1}^{1:n} - 1), \dots, \bar \Bc_{n, u_d, \rho}(R_{id}^{1:n} - 1) \right\},
\end{align} 
respectively. It is of course possible to imagine a version of the second estimator for which the common additional parameter $\rho$ of the $d$ scaled beta-binomial smoothing survival margins depends on the data, that is, $\rho = \rho^{\sss \Xc_{1:n}}$ (subject to the constraint that $\rho^{\sss \Xc_{1:n}} \in (1,n)$ almost surely). By construction, the smoothing survival margins satisfy Condition~\ref{cond:unif:marg}, which, according to Proposition~\ref{prop:unif:margin}, implies that $C_{1:n}^{\sss{\bar B,\bar \Cc}}$ and $C_{1:n}^{\sss{\bar \Bc,\bar \Cc}}$ have standard uniform univariate margins. As far as Condition~\ref{cond:smooth:surv:marg} is concerned, we have the following results proven in Appendix~\ref{proofs:cond:marg}.

\begin{prop}
  \label{prop:Bin:cond}
Condition~\ref{cond:smooth:surv:marg} is satisfied with $\bar \Fc_{j,t}^{\bm x}$ defined by $\bar \Fc_{j,t}^{\bm x}(w) = \bar B_{n,t}(n w)$, $w \in [0,1]$.  Specifically, for any $w \in [0,1)$, the function $t \mapsto \bar B_{n,t}(n w)$ is continuous and increasing on $[0,1]$.
\end{prop}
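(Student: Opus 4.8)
The plan is to reduce the claim to a statement about a single scalar function and then verify its two required properties separately. Since the proposed survival margin $\bar\Fc_{j,t}^{\bm x}(w)=\bar B_{n,t}(nw)$ depends neither on the sample $\bm x$ nor on the index $j$, Condition~\ref{cond:smooth:surv:marg} reduces, for each fixed $w\in[0,1)$, to showing that $t\mapsto \bar B_{n,t}(nw)$ is continuous (which in particular yields right-continuity) and increasing on $[0,1]$.

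First I would rewrite the survival probability in a polynomial form. Writing $S_t$ for a Binomial$(n,t)$ random variable, $\bar B_{n,t}(nw)=\Pr(S_t>nw)$; since $S_t$ is integer-valued and $w\in[0,1)$ forces $nw\in[0,n)$, we have $\Pr(S_t>nw)=\Pr(S_t\ge m)$ with $m=\ip{nw}+1\in\{1,\dots,n\}$. Hence, for fixed $w$,
\[
\bar B_{n,t}(nw)=g(t):=\sum_{k=m}^{n}\binom{n}{k}t^{k}(1-t)^{n-k},
\]
which is a polynomial in $t$ and therefore continuous on $[0,1]$; this settles the continuity, and hence the right-continuity, half of the statement at once.

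The substantive part is the monotonicity, i.e.\ $g'(t)\ge 0$ on $[0,1]$. Differentiating $g$ termwise and using $k\binom{n}{k}=n\binom{n-1}{k-1}$ together with $(n-k)\binom{n}{k}=n\binom{n-1}{k}$, the derivative of the $k$th summand equals $n(a_k-a_{k+1})$ with $a_k=\binom{n-1}{k-1}t^{k-1}(1-t)^{n-k}$, the boundary term $a_{n+1}$ vanishing because $\binom{n-1}{n}=0$. The sum therefore telescopes to
\[
g'(t)=n\,a_m=n\binom{n-1}{m-1}t^{m-1}(1-t)^{n-m},
\]
which one recognizes as the density of a Beta$(m,n+1-m)$ distribution up to its normalizing constant. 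Because $m\in\{1,\dots,n\}$, this expression is nonnegative on $[0,1]$ and strictly positive on $(0,1)$, so $g$ is strictly increasing on $[0,1]$, completing the verification.

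I do not expect a genuine obstacle here, as the result is elementary once the strict-inequality event $\{S_t>nw\}$ is converted to $\{S_t\ge m\}$ with the correct integer $m$; this conversion crucially uses $w<1$ to keep $m\le n$, so that the beta-type density appearing in $g'$ has both shape parameters at least $1$ and is well behaved up to the endpoint $t=1$. The only delicate point is the bookkeeping in the telescoping derivative. Equivalently, one may bypass that computation altogether by invoking the classical binomial--beta identity $\Pr(S_t\ge m)=\Beta_{n,m}(t)$, which identifies $g$ with the d.f.\ of a Beta$(m,n+1-m)$ law and yields continuity on $[0,1]$ and strict monotonicity on $(0,1)$ immediately.
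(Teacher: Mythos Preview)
Your proof is correct. For continuity you argue exactly as the paper does (the survival probability is a polynomial in $t$), but for monotonicity you take a different route: you differentiate $g(t)=\sum_{k=m}^n\binom{n}{k}t^k(1-t)^{n-k}$ directly, telescope to $g'(t)=n\binom{n-1}{m-1}t^{m-1}(1-t)^{n-m}\ge 0$, and also note the equivalent binomial--beta identity $\Pr(S_t\ge m)=\Beta_{n,m}(t)$. The paper instead proves monotonicity via stochastic ordering: for $0<t_1\le t_2<1$ it shows that $\mathrm{Binomial}(n,t_1)\leqlr\mathrm{Binomial}(n,t_2)$ by checking that the ratio of probability mass functions is increasing, and then invokes $\leqlr\Rightarrow\leqst$. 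Your argument is more elementary and yields the explicit derivative (hence strict monotonicity on $(0,1)$) in one stroke. The paper's likelihood-ratio approach is less direct here but has the advantage of providing a unified template that it reuses verbatim for the beta and beta-binomial margins, where a closed-form derivative is not as readily available; in particular, the beta-binomial case is handled by combining the binomial and beta likelihood-ratio orderings through a compounding argument, something your direct-differentiation method would not extend to as cleanly.
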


\begin{prop}
  \label{prop:BetaBin:cond}
Condition~\ref{cond:smooth:surv:marg} is satisfied with $\bar \Fc_{j,t}^{\bm x}$ defined by $\bar \Fc_{j,t}^{\bm x}(w) = \bar \Bc_{n,t,\rho}(nw)$, $w \in [0,1]$, for any $\rho \in (1,n)$. Specifically, for any $\rho \in (1,n)$ and $w \in [0,1)$, the function $t \mapsto \bar \Bc_{n,t,\rho}(n w)$ is continuous and increasing on $[0,1]$.
\end{prop}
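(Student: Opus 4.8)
The plan is to reduce the claim to the binomial case of Proposition~\ref{prop:Bin:cond} by means of the standard mixture representation of the beta-binomial law. Set $c = (n - \rho)/(\rho - 1)$, which is strictly positive since $\rho \in (1, n)$, so that the shape parameters of the Beta-Binomial$\big(n, t(n-\rho)/(\rho-1), (1-t)(n-\rho)/(\rho-1)\big)$ are $\alpha = ct$ and $\beta = c(1 - t)$ with $\alpha + \beta = c$ constant in $t$. Recall that if $S$ is Beta-Binomial$(n, \alpha, \beta)$, then $S \mid P$ is Binomial$(n, P)$ with $P$ distributed as Beta$(\alpha, \beta)$. Conditioning on $P$ therefore gives, for every $w \in [0,1)$,
$$
\bar \Bc_{n,t,\rho}(n w) = \Pr(S > n w) = \Ex\{ \bar B_{n, P_t}(n w) \}, \qquad P_t \sim \text{Beta}(ct, c(1-t)),
$$
so that both continuity and monotonicity in $t$ become statements about this expectation, the integrand $g(p) = \bar B_{n,p}(nw)$ being exactly the binomial survival function treated in Proposition~\ref{prop:Bin:cond}.

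For monotonicity, I would first note that the family Beta$(ct, c(1-t))$ is stochastically increasing in $t$: for $t_1 < t_2$ the ratio of densities is proportional to $\{p/(1-p)\}^{c(t_2 - t_1)}$, which is increasing in $p$, so the family has monotone likelihood ratio and hence $P_{t_1} \leqst P_{t_2}$. Since Proposition~\ref{prop:Bin:cond} gives that $g = \bar B_{n,\cdot}(n w)$ is increasing, integrating $g$ against these stochastically ordered laws yields $\bar \Bc_{n,t_1,\rho}(nw) \le \bar \Bc_{n,t_2,\rho}(nw)$. Strictness then follows from the identity $\Ex\{g(P_{t_2})\} - \Ex\{g(P_{t_1})\} = \int_0^1 g'(p) \{\Pr(P_{t_2} > p) - \Pr(P_{t_1} > p)\}\, dp$, whose integrand is nonnegative and strictly positive on a set of positive Lebesgue measure because $g'(p) = n \binom{n-1}{k} p^{k}(1-p)^{n-1-k} > 0$ on $(0,1)$ for $k = \ip{nw} \in \{0,\dots,n-1\}$ and the two beta laws are strictly stochastically ordered on $(0,1)$.

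For continuity on all of $[0,1]$, I would argue that $P_t$ converges in distribution as $t \to t_0$ for every $t_0 \in [0,1]$: on $(0,1)$ this is immediate from the continuous dependence of the beta density on $(\alpha,\beta)$, while at the endpoints the mean $t$ and variance $t(1-t)/(c+1)$ of $P_t$ tend to those of a point mass, so $P_t \to \delta_0$ as $t \to 0^+$ and $P_t \to \delta_1$ as $t \to 1^-$ (consistent with the degeneracy noted after~\eqref{eq:var:W}). As $g$ is bounded and continuous, $\Ex\{g(P_t)\}$ is then continuous in $t$: on $(0,1)$ it converges to $\Ex\{g(P_{t_0})\} = \bar \Bc_{n,t_0,\rho}(nw)$, whereas $\bar \Bc_{n,t,\rho}(nw) \to g(0) = 0 = \bar \Bc_{n,0,\rho}(nw)$ for $w \ge 0$ and $\to g(1) = 1 = \bar \Bc_{n,1,\rho}(nw)$ for $w < 1$, the latter requiring $\ip{nw} + 1 \le n$, which holds precisely because $w < 1$.

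I expect the monotonicity step to be the main obstacle. The mixture reduction above is the cleanest route, but it must correctly combine the two ingredients -- the stochastic ordering of the beta mixing law and the monotonicity of the binomial survival function from Proposition~\ref{prop:Bin:cond}. An alternative that avoids the mixture is to establish monotone likelihood ratio directly on the beta-binomial mass function, writing the ratio of masses at $t_1 < t_2$ as a product of two rising-factorial ratios, one in $\alpha$ and one in $\beta$, and checking that each is monotone in $s$ (the first increasing because $\alpha_2 > \alpha_1$, the second because $\beta_2 < \beta_1$). Either argument hinges on the constancy of $\alpha + \beta = c$, which is exactly what makes the likelihood-ratio comparison tractable; by comparison, the endpoint continuity is a minor subtlety already signposted by the degeneracy of the smoothing margins at $u_j \in \{0,1\}$.
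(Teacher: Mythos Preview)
Your proof is correct. For monotonicity it is essentially the paper's argument: both exploit the compound representation of the beta-binomial as a Binomial$(n,P)$ mixed over $P\sim\text{Beta}(ct,c(1-t))$, establish the likelihood-ratio (hence stochastic) ordering of this beta family in $t$, and combine it with the monotonicity of the binomial survival function from Proposition~\ref{prop:Bin:cond}; the paper phrases the last step as an appeal to Theorem~1.A.6 in \cite{ShaSha07} on stochastic ordering of compound distributions, whereas you integrate the increasing $g$ directly. Your explicit treatment of strict monotonicity via $g'(p)=n\binom{n-1}{k}p^k(1-p)^{n-1-k}$ is a detail the paper leaves implicit.

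For continuity the routes differ. The paper abandons the mixture and writes out the beta-binomial mass function explicitly in terms of Gamma functions, reading off continuity on $(0,1)$ from that of $\Gamma$ and handling the endpoints via Markov's inequality and the symmetry $\bar\Bc_{n,1-t,\rho}(w)=1-\bar\Bc_{n,t,\rho}(n-w)$. Your approach---keeping the mixture representation throughout and invoking weak convergence of $P_t$ (to $P_{t_0}$ on $(0,1)$, and to $\delta_0,\delta_1$ at the endpoints via the mean--variance argument) together with the bounded continuity of $g$---is more unified and arguably cleaner; the paper's version is entirely elementary, avoiding any weak-convergence machinery, at the cost of a separate explicit computation.
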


Hence, according to Proposition~\ref{prop:mult:df}, the estimators $C_{1:n}^{\sss{\bar B,\bar \Cc}}$ and $C_{1:n}^{\sss{\bar \Bc,\bar \Cc}}$ are multivariate d.f.s and therefore, according to Corollary~\ref{cor:copula}, genuine copulas \ik{if Condition~\ref{cond:no:ties} holds}. Notice also that the smooth empirical copula $C_{1:n}^{\sss{\bar B, \bar \Cc}}$ coincides with the empirical beta copula $C_{1:n}^\Beta$ in~\eqref{eq:C:1n:beta} if $\bar \Cc^{\sss \Xc_{1:n}}$ is taken to be the independence copula $\Pi$. Based on~\eqref{eq:ec:Bin:marg}, an alternative notation for the empirical beta copula is thus $C_{1:n}^{\sss{\bar B,\Pi}}$. 

\begin{remark}
  Let $m \in \N$ and recall from Remark~\ref{rem:Bernstein} that if, for any $\bm x \in (\R^d)^n$ and $\bm u \in [0,1]^d$, $\nu_{\bm u}^{\bm x}$ is the law of the random vector $(S_{m,1,u_1}/m,\dots, S_{m,d,u_d}/m)$ where $S_{m,1,u_1},\dots S_{m,d,u_d}$ are independent random variables such that, for each $j \in \{1,\dots d\}$, $S_{m,j,u_j}$ is Binomial$(m,u_j)$, then, from Section~3 in \cite{SegSibTsu17}, $C_{1:n}^\nu$ in~\eqref{eq:C:1n:nu} is the empirical Bernstein copula with polynomial degrees all equal to $m$. It immediately follows that the expectation and variance of the $j$th margin of the underlying smoothing distribution corresponding to $\bm u \in [0,1]^d$ are $u_j$ and $u_j (1-u_j) / m$, respectively. Let $\rho \in (1,n)$ and let us focus on the special case $m = \up{n/\rho}$. The underlying smoothing distribution corresponding to $\bm u \in [0,1]^d$ can then be regarded as close to the smoothing distribution corresponding to the same $\bm u$ involved in the definition of the estimator $C_{1:n}^{\sss{\bar \Bc, \Pi}}$ generically defined in \eqref{eq:ec:BetaBin:marg} with dispersion parameter $\rho$. Indeed, both smoothing distributions have independent copula, expectation $\bm u$ and the variances of their $j$th margin are $u_j (1-u_j) / \up{n/\rho}$ and $\rho u_j (1-u_j) / n$, respectively (see~\eqref{eq:beta-binomial:var:2}). The estimator $C_{1:n}^{\sss{\bar \Bc, \Pi}}$ with dispersion parameter $\rho$ can however be regarded as an advantageous replacement of the empirical Bernstein copula with polynomial degrees all equal to $\up{n/\rho}$. Indeed, \ik{if Condition~\ref{cond:no:ties} holds,} the former is a genuine copula by Corollary~\ref{cor:copula} (no matter how $\rho \in (1,n)$ is chosen) whereas the latter is a genuine copula only when $n$ is a multiple of $\up{n/\rho}$ by Proposition~2.5 in \cite{SegSibTsu17}. When $\rho = 3$, the disadvantages in terms of finite-sample performance of the empirical Bernstein copula with polynomial degrees all equal to $\up{n/\rho}$ are highlighted in Fig.~3 of \cite{SegSibTsu17}.
\end{remark}

\subsection{Beta smoothing survival margins}
\label{sec:beta:margin}

Since the support of the beta distribution is included in $[0,1]$, it is natural to attempt to use it to define the continuous smoothing survival margins involved in the definition of the estimator~\eqref{eq:C:1n:cont}. Thus, given $u \in [0,1]$, let $W$ be a random variable with the beta distribution Beta$(\alpha, \beta)$ whose shape parameters $\alpha > 0$ and $\beta > 0$ need to be determined so that $\Ex(W) = u$. Again, from~\eqref{eq:var:W}, only the case $u \in (0,1)$ needs to be addressed. We know that
\begin{align}
  \label{eq:beta:ex}
  \Ex(W) &= \frac{\alpha}{\alpha + \beta}, \\
  \label{eq:beta:var}
  \Var(W) &= \frac{\alpha \beta}{(\alpha + \beta)^2 (\alpha + \beta + 1)}.
\end{align}
From the fact that the expectation of $W$ is required to be $u$ and~\eqref{eq:beta:ex}, we obtain that $\beta = \alpha(1-u)/u$, which, combined with~\eqref{eq:beta:var}, implies in turn that
$$
\Var(W) = \frac{u^2(1-u)}{\alpha + u} = \frac{u(1 - u)}{n} \times \frac{nu}{\alpha + u}.
$$
Proceeding as for the scaled beta-binomial smoothing survival margins but keeping in mind that the variance of the beta distribution with expectation $u$ can be arbitrarily small, we set
$$
\frac{nu}{\alpha + u} = \rho > 0
$$
and attempt to solve for $\alpha > 0$. Provided that $\rho < n$, we obtain that $\alpha = u(n-\rho) / \rho$ and, consequently, that $\beta  = (1-u)(n-\rho) / \rho$. The distribution of $W$ can thus be taken to be the Beta$\big( u(n-\rho)/\rho, (1-u)(n-\rho)/\rho \big)$, where $\rho \in (0,n)$ is an additional parameter. As required, we have that $\Ex(W) = u$, whereas the parameter $\rho$ controls the variance of $W$ which is given by
\begin{equation}
  \label{eq:beta:var:2}
  \Var(W) = \rho \times \frac{u(1-u)}{n}.
\end{equation}
Hence, the variance of $W$ can again be compared to the variance of the corresponding scaled binomial survival margin. Notice however that, unlike for scaled beta-binomial survival margins, beta survival margins can be underdispersed compared to the corresponding scaled binomial survival margins.

From the fact that the densities of the Beta$(\alpha, \beta)$ and the Beta$(\beta,\alpha)$ are symmetrical with respect to 1/2, we again have that, with respect to one coordinate, the smoothing around $u$ will be the ``reflection'' of the smoothing around~$1-u$.

For any $u \in [0,1]$ and $\rho \in (0,n)$, let $\bar \beta_{n, u, \rho}$ be the survival function of the Beta$\big( u(n-\rho)/\rho, (1-u)(n-\rho)/\rho \big)$, $\rho \in (0,n)$. One subclass of the class of smooth empirical copulas~\eqref{eq:C:1n:cont} that, under Condition~\ref{cond:smooth:cop}, still depends on the choice of the possibly data-dependent survival copula $\bar \Cc = \bar \Cc^{\sss \Xc_{1:n}}$, can then be defined by
\begin{equation}
  \label{eq:ec:Beta:marg}
  C_{1:n}^{\sss{\bar \beta,\bar \Cc}}(\bm u) = \frac{1}{n} \sum_{i=1}^n \bar \Cc^{\sss \Xc_{1:n}} \left[ \bar \beta_{n, u_1, \rho}\{(R_{i1}^{1:n}-1/2)/n\}, \dots, \bar \beta_{n, u_d, \rho}\{(R_{id}^{1:n}-1/2)/n\} \right], \qquad \bm u \in [0,1]^d.
\end{equation}
As for the estimator~\eqref{eq:ec:BetaBin:marg}, it is possible to imagine a version of~\eqref{eq:ec:Beta:marg} for which the common additional parameter $\rho$ of the $d$ beta smoothing survival margins depends on the data, that is, $\rho = \rho^{\sss \Xc_{1:n}}$ (subject to the constraint that $\rho^{\sss \Xc_{1:n}} \in (0,n)$ almost surely). As far as Condition~\ref{cond:smooth:surv:marg:cont} is concerned, we have the following result proven in Appendix~\ref{proofs:cond:marg}.

\begin{prop}
  \label{prop:Beta:cond}
Condition~\ref{cond:smooth:surv:marg:cont} is satisfied with $\bar \Fc_{j,t}^{\bm x} = \bar \beta_{n,t,\rho}$, for any $\rho \in (0,n)$. Specifically, for any $\rho \in (0,n)$ and $w \in (0,1)$, the function $t \mapsto \bar \beta_{n,t,\rho}(w)$ is continuous and increasing on $[0,1]$.
\end{prop}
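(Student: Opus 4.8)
The plan is to prove the two assertions separately: monotonicity via stochastic ordering, and continuity via dominated convergence on the interior together with a crude tail bound at the two endpoints. Note first that $\bar\beta_{n,t,\rho}$ does not depend on $\bm x$ at all, so the ``for any $\bm x$'' part of Condition~\ref{cond:smooth:surv:marg:cont} is vacuous. Fix $\rho \in (0,n)$ and write $c = (n-\rho)/\rho$, which is strictly positive. For $t \in [0,1]$ let $W_t$ be a random variable with the Beta$(ct, c(1-t))$ distribution defining $\bar\beta_{n,t,\rho}$, so that $\bar\beta_{n,t,\rho}(w) = \Pr(W_t > w)$. The structural fact driving the whole argument is that $\alpha(t)+\beta(t) = ct + c(1-t) = c$ is \emph{constant} in $t$: we move through a one-parameter family of beta laws with fixed total concentration $c$ and mean $\Ex(W_t)=t$. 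At the endpoints the law degenerates to a point mass at $0$ (for $t=0$) and at $1$ (for $t=1$), so $\bar\beta_{n,0,\rho}(w)=0$ and $\bar\beta_{n,1,\rho}(w)=1$ for every $w \in (0,1)$.

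For monotonicity I would pass through the likelihood ratio order. For $0 < t_1 < t_2 < 1$ both $W_{t_1}$ and $W_{t_2}$ have densities supported on $(0,1)$, and the ratio of the density of $W_{t_2}$ to that of $W_{t_1}$ equals, up to a positive constant, $\{x/(1-x)\}^{c(t_2 - t_1)}$. Since $c(t_2-t_1) > 0$ and $x \mapsto x/(1-x)$ is increasing on $(0,1)$, this ratio is increasing in $x$, so $W_{t_1} \leqlr W_{t_2}$ and hence $W_{t_1} \leqst W_{t_2}$. Consequently $\bar\beta_{n,t_1,\rho}(w) = \Pr(W_{t_1} > w) \le \Pr(W_{t_2} > w) = \bar\beta_{n,t_2,\rho}(w)$ for every $w$, with strict inequality for $w \in (0,1)$ because the likelihood ratio is non-constant. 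Since the endpoint values $0$ and $1$ lie strictly below, respectively above, all interior values, $t \mapsto \bar\beta_{n,t,\rho}(w)$ is increasing on the whole of $[0,1]$.

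For continuity on $(0,1)$ I would write $\bar\beta_{n,t,\rho}(w) = B(ct, c(1-t))^{-1} \int_w^1 x^{ct-1}(1-x)^{c(1-t)-1}\, \dd x$ and check that both factors are continuous in $t$. The normalizing factor $B(ct, c(1-t)) = \Gamma(ct)\Gamma(c(1-t))/\Gamma(c)$ is continuous and strictly positive for $t \in (0,1)$. For the incomplete integral, dominated convergence applies on any compact subinterval $[t_0-\delta, t_0+\delta] \subset (0,1)$: for $x \in [w,1)$ the factor $x^{ct-1}$ is bounded uniformly in $t$ (the lower limit $w>0$ keeps it away from the only possible singularity at $x=0$), and near $x=1$ the integrand is dominated by $(\text{const})\,(1-x)^{c(1-t_0-\delta)-1}$, which is integrable on $[w,1]$ since $c(1-t_0-\delta) > 0$. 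Alternatively, one may simply invoke the joint continuity of the regularized incomplete beta function in its two shape parameters on $(0,\infty)^2$.

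Finally, continuity at the two endpoints reduces to checking that the interior values converge to the degenerate values $0$ and $1$, and here Markov's inequality is the cleanest tool: $\Pr(W_t > w) \le \Ex(W_t)/w = t/w \to 0$ as $t \to 0^+$, and, applying the same bound to $1 - W_t \sim \text{Beta}(c(1-t), ct)$, one gets $\Pr(W_t \le w) = \Pr(1 - W_t \ge 1-w) \le (1-t)/(1-w) \to 0$ as $t \to 1^-$, so $\Pr(W_t > w) \to 1$. These limits match $\bar\beta_{n,0,\rho}(w)$ and $\bar\beta_{n,1,\rho}(w)$, completing continuity on $[0,1]$. I expect the only real obstacle to be the routine dominated-convergence bookkeeping controlling the integrand near $x=1$ on the interior; the substantive claim, monotonicity, follows transparently once one exploits the constancy of $\alpha(t)+\beta(t)$ to reduce the density ratio to a power of $x/(1-x)$.
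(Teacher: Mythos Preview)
Your proposal is correct and follows essentially the same route as the paper: monotonicity via the likelihood ratio order (the density ratio reduces to a power of $x/(1-x)$, exactly as in the paper's Lemma for the beta case), and continuity via dominated convergence on the interior together with Markov's inequality at the endpoints (the paper uses the reflection $\bar\beta_{n,1-t,\rho}(w)=1-\bar\beta_{n,t,\rho}(1-w)$ to handle $t\to 1^-$, which is equivalent to your $1-W_t$ argument). Your dominated-convergence bookkeeping---restricting $t$ to a compact subinterval of $(0,1)$ and dominating near $x=1$ by $(1-x)^{c(1-t_0-\delta)-1}$---is in fact a bit more careful than the paper's, which claims a uniform pointwise bound $f_{t_m,\tau}(x)\le \Gamma(\tau)/K^2$ that is not obviously valid when the shape parameters fall below~$1$.
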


Hence, according to Proposition~\ref{prop:mult:df:cont}, $C_{1:n}^{\sss{\bar \beta,\bar \Cc}}$ is a multivariate d.f. \ik{If Condition~\ref{cond:no:ties} holds,}  it is however not a genuine copula since it does not have standard uniform margins. The latter can for instance be verified numerically using the \textsf{R} implementation of the estimator provided on the web page of the first author.

\subsection{On the survival copula of the smoothing distributions}
\label{sec:surv:cop}

In view of Propositions~\ref{prop:mult:df} and~\ref{prop:mult:df:cont}, it seems desirable to choose the survival copulas of the smoothing distributions appearing in~\eqref{eq:C:1n:dis} and~\eqref{eq:C:1n:cont} such that Condition~\ref{cond:smooth:cop} holds. In order to propose a meaningful choice for the resulting possibly data-dependent survival copula~$\bar \Cc^{\sss \Xc_{1:n}}$, we first gather hereafter some theoretical facts.

Assume that Condition~\ref{cond:no:ties} holds and fix $\bm u \in [0,1]^d$. If $\bm W_{\bm u}^{\sss \Xc_{1:n}}$ takes its values in the set $\{0,1/n,\dots,(n-1)/n,1\}^d$, it is easy to verify from~\eqref{eq:C:1n:dis} that
\begin{equation}
  \label{eq:ex:C:1n:nu:dis}
\Ex\{C_{1:n}^\nu(\bm u)\} = \Ex \Big( \bar \Cc^{\sss \Xc_{1:n}} [ \bar \Fc_{1,u_1}^{\sss \Xc_{1:n}}\{ (R_{11}^{1:n} - 1) / n\}, \dots, \bar \Fc_{d,u_d}^{\sss \Xc_{1:n}} \{ (R_{1d}^{1:n} - 1) / n \} ] \Big)
\end{equation}
since $\bm X_1,\dots,\bm X_n$ are identically distributed. Note furthermore that, for any $j \in \{1,\dots,d\}$, 
\begin{equation}
  \label{eq:marg:dis}
  \begin{split}
    \Ex [ \bar \Fc_{j,u_j}^{\sss \Xc_{1:n}}\{ (R_{1j}^{1:n} - 1) / n \}] &= \Ex \left( \Ex \left[ \bar \Fc_{j,u_j}^{\sss \Xc_{1:n}}\{ (R_{1j}^{1:n} - 1) / n \} \mid \Xc_{1:n} \right] \right) = \Ex \left[ \frac{1}{n} \sum_{i=1}^n  \bar \Fc_{j,u_j}^{\sss \Xc_{1:n}}\{ (i - 1) / n \} \right]  \\
    &= \Ex \left[ \frac{1}{n} \sum_{i=1}^n  \Pr \{ W_{j,u_j}^{\sss \Xc_{1:n}} > (i - 1) / n \mid \Xc_{1:n} \}  \right] = \Ex \left[ \frac{1}{n} \sum_{i=1}^n  \Ex \left\{ \1( W_{j,u_j}^{\sss \Xc_{1:n}} \geq i / n) \mid \Xc_{1:n} \right\} \right]  \\
    &= \Ex \left[ \Ex\left\{ \frac{1}{n} \sum_{i=1}^n  \1 ( i \leq n W_{j,u_j}^{\sss \Xc_{1:n}}) \mid \Xc_{1:n} \right\} \right] = \Ex \left\{ \Ex\left( \frac{\ip{n W_{j,u_j}^{\sss \Xc_{1:n}}}}{n} \mid \Xc_{1:n} \right) \right\} \\
    &= \Ex \left\{ \Ex ( W_{j,u_j}^{\sss \Xc_{1:n}} \mid \Xc_{1:n} ) \right\} = \Ex(u_j) = u_j.
  \end{split}
\end{equation}
In other words, the expected value of $C_{1:n}^\nu(\bm u)$ is the expectation of a random variable obtained by applying the copula $\bar \Cc^{\sss \Xc_{1:n}}$ to the components of a random vector with expectation~$\bm u$. Similarly, if $\bm W_{\bm u}^{\sss \Xc_{1:n}}$ is continuous, from~\eqref{eq:C:1n:cont}, we have that
\begin{equation}
  \label{eq:ex:C:1n:nu:cont}
\Ex\{C_{1:n}^{\nu,\text{cor}}(\bm u)\} = \Ex \Big( \bar \Cc^{\sss \Xc_{1:n}} [ \bar \Fc_{1,u_1}^{\sss \Xc_{1:n}}\{ (R_{11}^{1:n} - 1/2) / n\}, \dots, \bar \Fc_{d,u_d}^{\sss \Xc_{1:n}} \{ (R_{1d}^{1:n} - 1/2) / n \} ] \Big)
\end{equation}
and, moreover, proceeding similarly to~\eqref{eq:marg:dis}, that
\begin{equation}
  \label{eq:marg:cont}
  \begin{split}
    \Ex[ \bar \Fc_{j,u_j}^{\sss \Xc_{1:n}}\{ (R_{1j}^{1:n} - 1/2) / n \}] 
    &= \Ex\left( \frac{\ip{n W_{j,u_j}^{\sss \Xc_{1:n}} + 1/2}}{n}  \right),
  \end{split}
\end{equation}
which, combined with the fact that $\sup_{w \in [0,1]} |\ip{n w + 1/2} / n - w| = 1/(2n)$, implies that
\begin{equation}
  \label{eq:marg:bias:cont}
\big| \Ex[ \bar \Fc_{j,u_j}^{\sss \Xc_{1:n}}\{ (R_{1j}^{1:n} - 1/2) / n \} ] - u_j \big| \leq  1/(2n).
\end{equation}
In other words, in the case of a continuous smoothing distribution, the expected value of $C_{1:n}^{\nu,\text{cor}}(\bm u)$ is the expectation of a random variable obtained by applying the copula $\bar \Cc^{\sss \Xc_{1:n}}$ to the components of a random vector whose expectation is within $1/(2n)$ of $\bm u$.

Given that $C_{1:n}^\nu(\bm u)$ and $C_{1:n}^{\nu,\text{cor}}(\bm u)$ are estimators of $C(\bm u)$, the previous derivations suggest that it may be meaningful to expect that, in many situations, their biases will be minimized (or at least will be small) if the survival copula $\bar \Cc^{\sss \Xc_{1:n}}$ of $\bm W_{\bm u}^{\sss \Xc_{1:n}}$ is taken equal to the copula $C$. When $C$ is the independence copula $\Pi$, the above choice is optimal if Condition~\ref{cond:unif:marg} holds. Indeed, setting $\bar \Cc^{\sss \Xc_{1:n}} = \Pi$ in~\eqref{eq:ex:C:1n:nu:dis} and using~\eqref{eq:marg:dis} immediately yields that $C_{1:n}^\nu(\bm u)$ is an unbiased estimator of $C(\bm u) = \Pi(\bm u)$. Similarly, if $\bm W_{\bm u}^{\sss \Xc_{1:n}}$ is continuous, one straightforwardly obtains from~\eqref{eq:ex:C:1n:nu:cont},~\eqref{eq:marg:cont} and~\eqref{eq:marg:bias:cont} that the bias of $C_{1:n}^{\nu,\text{cor}}(\bm u)$ will decrease quickly as $n$ increases.

The previous discussion focusing on the bias thus suggests, if $C$ were known, to choose the survival copula $\bar \Cc^{\sss \Xc_{1:n}}$ in~\eqref{eq:C:1n:dis} and~\eqref{eq:C:1n:cont} equal to $C$. Since $C$ is not known, a natural solution consists of taking $\bar \Cc^{\sss \Xc_{1:n}}$ equal to a pilot estimate of $C$ based on $\Xc_{1:n}$. This aspect will be empirically investigated in Section~\ref{sec:MC:est}.

\section{Suggested data-adaptive alternatives to the empirical beta copula}
\label{sec:MC:est}

The first aim of this section is to study, under Condition~\ref{cond:smooth:cop}, the finite-sample performance of the estimators $C_{1:n}^{\sss{\bar B, \bar \Cc}}$ in~\eqref{eq:ec:Bin:marg}, $C_{1:n}^{\sss{\bar \Bc, \bar \Cc}}$ in~\eqref{eq:ec:BetaBin:marg} and $C_{1:n}^{\sss{\bar \beta, \bar \Cc}}$ in~\eqref{eq:ec:Beta:marg} in the case when the survival copula $\bar \Cc^{\sss \Xc_{1:n}}$ does not depend on the data. These preliminary investigations will then be used to propose a natural way of choosing $\bar \Cc^{\sss \Xc_{1:n}}$ from the data in the expressions of $C_{1:n}^{\sss{\bar B, \bar \Cc}}$, $C_{1:n}^{\sss{\bar \Bc, \bar \Cc}}$ and $C_{1:n}^{\sss{\bar \beta, \bar \Cc}}$. The finite-sample performance of the resulting estimators will be finally studied for many bivariate and trivariate data-generating models.

\subsection{Experiments for understanding the influence of the spread and the shape of the smoothing distributions}
\label{sec:influence}

Following \cite{SegSibTsu17}, for each sample size $n$ under consideration, each data generating copula $C$ and each copula estimator $\hat C_{1:n}$ under investigation, we estimated the following three performance measures:
\begin{align*}
  \text{integrated squared bias: } & \int_{[0,1]^d} \left[ \Ex  \left\{ \hat C_{1:n}(\bm u) - C(\bm u) \right \} \right]^2 \dd \bm u,\\
  \text{integrated variance: } & \int_{[0,1]^d} \Ex \left( \left[ \hat C_{1:n}(\bm u) - \Ex \{ \hat C_{1:n} (\bm u) \} \right]^2 \right) \dd \bm u,\\
  \text{integrated mean squared error: } & \int_{[0,1]^d} \Ex \left[ \left\{ \hat C_{1:n}(\bm u) - C(\bm u) \right\}^2 \right] \dd \bm u.
\end{align*}
To do so,  we proceeded by Monte Carlo simulation using $20{,}000$ independent random samples of size $n$ from $C$ and applied the trick described in detail in Appendix~B of \cite{SegSibTsu17}. All the numerical experiments were carried out using the \textsf{R} statistical environment and its packages \texttt{copula} \cite{copula} and \texttt{extraDistr} \cite{extraDistr}.

\begin{figure}[t!]
\begin{center}
  \includegraphics*[width=1\linewidth]{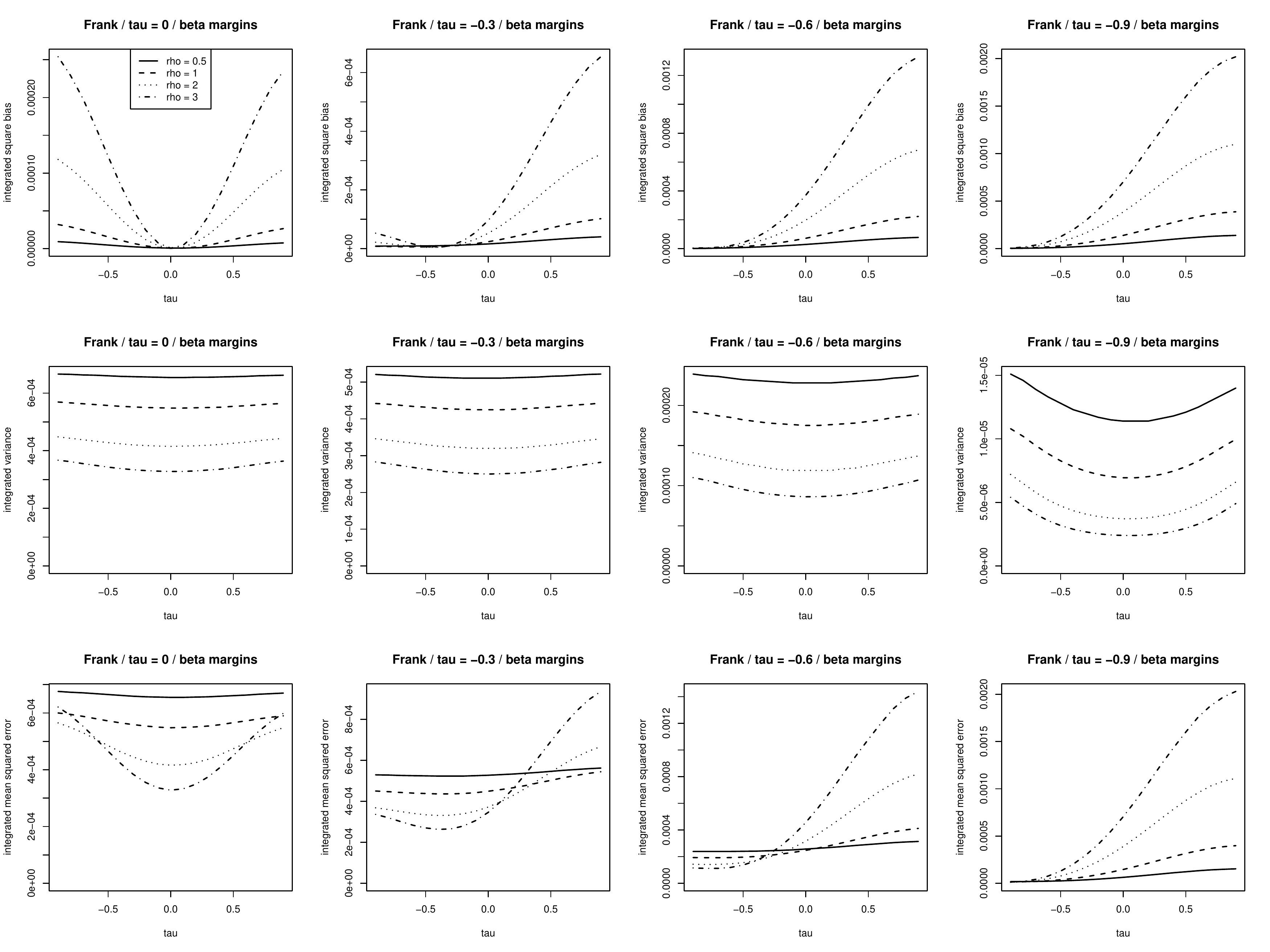}
  \caption{\label{fig:frank} Influence of the dispersion parameter $\rho$ of the beta smoothing survival margins and of the value of Kendall's tau  (given on the $x$-axis of each panel) of the smoothing Frank copula~$\bar \Cc^{\sss \Xc_{1:n}}$ (not taken to depend on the data) on the three performances measures of the smooth estimator $C_{1:n}^{\sss{\bar \beta, \bar \Cc}}$ in~\eqref{eq:ec:Beta:marg} computed from samples of size $n=30$ from a Frank copula with a Kendall's tau in $\{0, -0.3, - 0.6, - 0.9\}$.}
\end{center}
\end{figure}

As a first experiment, we considered $n \in \{10, 30, 50\}$ and $C$ to be the bivariate Frank copula \cite{Gen87} with a Kendall's tau in $\{0, -0.3, - 0.6, - 0.9\}$. We then estimated the above three performance measures for the smooth empirical copula $C_{1:n}^{\sss{\bar \beta, \bar \Cc}}$ in~\eqref{eq:ec:Beta:marg} with $\rho \in \{0.5, 1, 2, 3\}$ and $\bar \Cc^{\sss \Xc_{1:n}}$ the Frank copula with a Kendall's tau in $\{-0.9,-0.8,\dots,0.9\}$ (thus not taken to depend on the data). The results are represented in Fig.~\ref{fig:frank} for $n = 30$ and are qualitatively similar for $n \in \{10,50\}$. Each column of graphs corresponds to a different Kendall's tau for the data generating Frank copula. The first (resp.\ second, third) row of plots reports the integrated squared bias (resp.\ integrated variance, integrated mean squared error). The four curves in each panel represent the four considered values of the dispersion parameter $\rho$ and give the value of the performance measure against the value of Kendall's tau of the (data-independent) Frank copula $\bar \Cc^{\sss \Xc_{1:n}}$ of the smoothing survival distribution.

From the first row of graphs in Fig.~\ref{fig:frank}, we see that, as expected from the discussion in Section~\ref{sec:surv:cop}, the integrated squared bias is minimized when the survival copula $\bar \Cc^{\sss \Xc_{1:n}}$ is close to the data generating copula $C$. The larger the value of the marginal dispersion parameter~$\rho$, the more obvious the previous conclusion is. The second row of plots reveals, on one hand, that the shape of the smoothing distribution controlled by $\bar \Cc^{\sss \Xc_{1:n}}$ has relatively little influence on the integrated variance (although the latter seems always minimized when $\bar \Cc^{\sss \Xc_{1:n}}$ is close to the independence copula $\Pi$) and, on the other hand, that the larger~$\rho$, the lower the integrated variance. The third row of graphs combines the previous conclusions and shows that the integrated mean squared error seems minimized when $\bar \Cc^{\sss \Xc_{1:n}}$ is close to $C$ and $\rho$ is large provided the absolute value of Kendall's tau of $C$ is strictly smaller than 0.9. In the case of strongly (negatively) dependent observations, the lowest integrated mean squared error seems also reached for $\rho = 0.5$, the latter setting having the advantage of being hardly unaffected by the choice of~$\bar \Cc^{\sss \Xc_{1:n}}$.

\begin{figure}[t!]
\begin{center}
  \includegraphics*[width=1\linewidth]{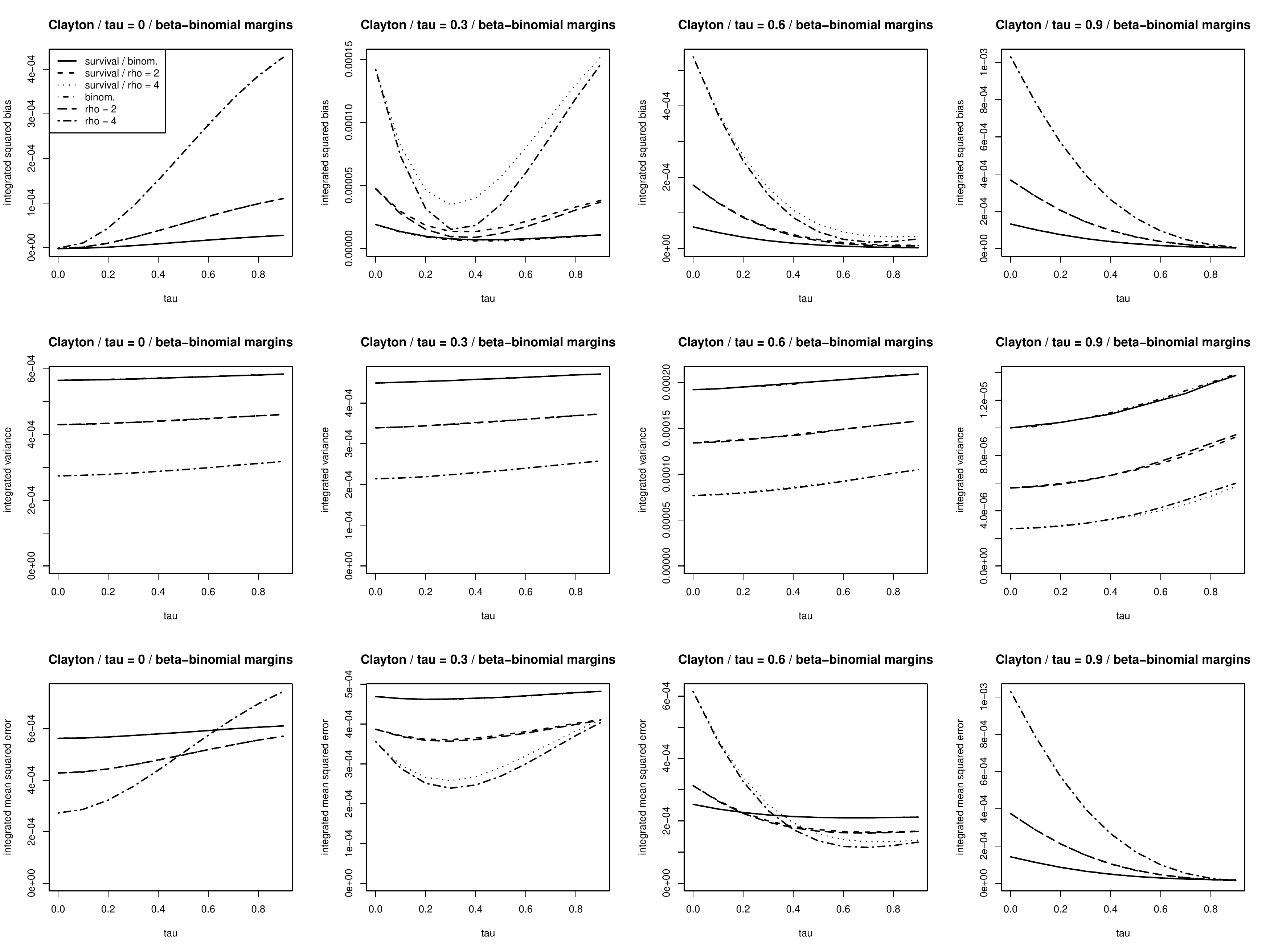}
  \caption{\label{fig:clayton} Influence of the dispersion parameter $\rho$ of the scaled beta-binomial smoothing survival margins and of the value of Kendall's tau  (given on the $x$-axis of each panel) of the smoothing Clayton or survival Clayton copula $\bar \Cc^{\sss \Xc_{1:n}}$ (not taken to depend on the data) on the three performances measures of the smooth estimators $C_{1:n}^{\sss{\bar B, \bar \Cc}}$ in~\eqref{eq:ec:Bin:marg} and $C_{1:n}^{\sss{\bar \Bc, \bar \Cc}}$ in~\eqref{eq:ec:BetaBin:marg} computed from samples of size $n = 30$ drawn from a Clayton copula with a Kendall's tau in $\{0, 0.3, 0.6, 0.9\}$.}
\end{center}
\end{figure}

As a second experiment, we considered a similar setting but with $C$ a bivariate Clayton copula \cite{Cla78} and for the estimators $C_{1:n}^{\sss{\bar B, \bar \Cc}}$ in~\eqref{eq:ec:Bin:marg} and $C_{1:n}^{\sss{\bar \Bc, \bar \Cc}}$ in~\eqref{eq:ec:BetaBin:marg} involving scaled binomial and scaled beta-binomial smoothing survival margins with dispersion parameter $\rho \in \{2, 4\}$, respectively. This time, we also allowed $\bar \Cc^{\sss \Xc_{1:n}}$ to be a survival Clayton copula (still not taken to depend on the data). Note that a Clayton copula and the corresponding survival copula have the same value of Kendall's tau whereas they are very different: the former is lower tail dependent while the latter is upper tail dependent \cite[see, e.g.,][Section~3.4.1]{HofKojMaeYan18}. The results are represented in Fig.~\ref{fig:clayton} for $n = 30$. The conclusions are the same as those drawn after the first experiment. In particular, the curves for $\bar \Cc^{\sss \Xc_{1:n}}$ the Clayton copula and $\bar \Cc^{\sss \Xc_{1:n}}$ the corresponding survival Clayton copula for the same value of $\rho$ in Fig.~\ref{fig:clayton} confirm that the integrated bias and the integrated mean squared error are minimized when $\bar \Cc^{\sss \Xc_{1:n}}$ is approximately equal to $C$. The conclusions remain qualitatively the same for $n \in \{10,50\}$ or when the Clayton family is replaced by the Gumbel--Hougaard family \citep{Gum61,Hou86}. 

\subsection{Suggested data-adaptive smooth estimators and their finite-sample performance}
\label{sec:suggested}

The previous experiments confirm what the theoretical hints given in Section~\ref{sec:surv:cop} already suggested: it seems meaningful to take the survival copula $\bar \Cc^{\sss \Xc_{1:n}}$ of the smoothing distributions equal to the data generating copula $C$. Since the latter is unknown and since the empirical beta copula $C_{1:n}^\Beta$ in~\eqref{eq:C:1n:beta} is probably one of the best available estimators of~$C$, it is a natural choice for~$\bar \Cc^{\sss \Xc_{1:n}}$. As far as the dispersion parameter $\rho$ is concerned, the results reported in Section~\ref{sec:influence} suggest that taking its value to be larger than one, say $\rho \in \{2,4\}$, may be a good general choice. Two suggested smooth data-adaptive copula estimators are thus $C_{1:n}^{\sss{\bar \Bc, C_{1:n}^\Beta}}$ and $C_{1:n}^{\sss{\bar \beta, C_{1:n}^\Beta}}$ with $\rho \in \{2,4\}$. The first one, generically defined in~\eqref{eq:ec:BetaBin:marg}, has scaled beta-binomial smoothing survival margins and uses the empirical beta copula for $\bar \Cc^{\sss \Xc_{1:n}}$. The second one, generically defined in~\eqref{eq:ec:Beta:marg}, has beta smoothing survival margins and also uses the empirical beta copula for $\bar \Cc^{\sss \Xc_{1:n}}$. As competitors to these estimators in our Monte Carlo experiments, we considered:
\begin{itemize}
\item the empirical beta copula $C_{1:n}^\Beta$  (which is the same as the estimator $C_{1:n}^{\sss{\bar B, \Pi}}$ in~\eqref{eq:ec:Bin:marg} with $\bar \Cc^{\sss \Xc_{1:n}} = \Pi$),
\item the estimators $C_{1:n}^{\sss{\bar \Bc, \Pi}}$ and $C_{1:n}^{\sss{\bar \beta, \Pi}}$ with $\rho \in \{ 2,4\}$, which can be regarded as smoother versions of $C_{1:n}^\Beta = C_{1:n}^{\sss{\bar B, \Pi}}$,
\item and the estimators $C_{1:n}^{\sss{\bar \beta, \Pi}}$ and $C_{1:n}^{\sss{\bar \beta, C_{1:n}^\Beta}}$ with $\rho = 0.5$ which are marginally rougher than all of the previously considered ones.
\end{itemize}

\begin{figure}[t!]
\begin{center}
  \includegraphics*[width=1\linewidth]{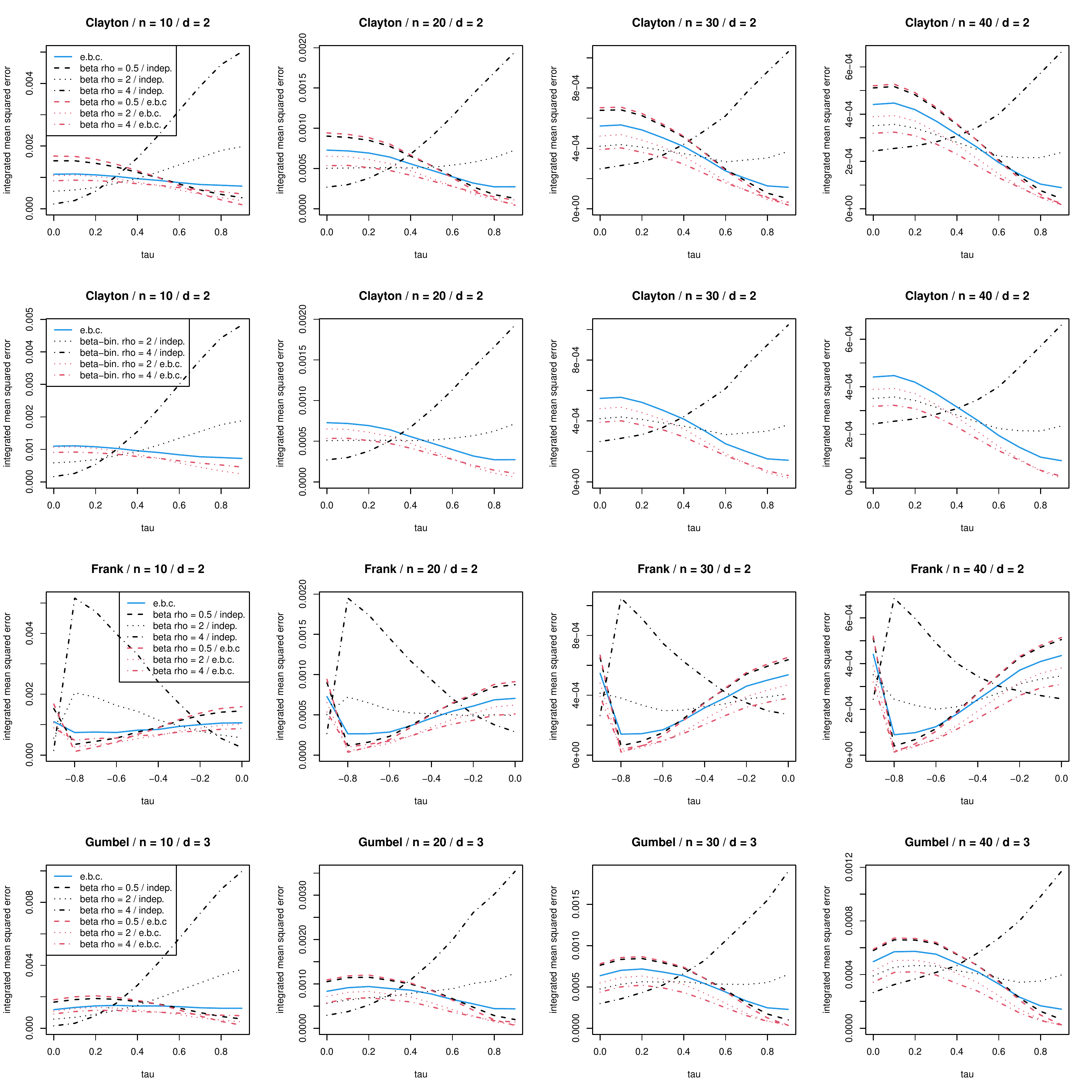}
  \caption{\label{fig:rho:tau} Estimated integrated mean squared errors against Kendall's tau $\tau$ for $\tau$ in either $\{0,0.1,\dots,0.9\}$ or $\{-0.9,-0.8,\dots,0\}$ of various estimators of $C$ for $C$ the bivariate Clayton (first and second row of graphs), the bivariate Frank (third row of graphs) or the trivariate Gumbel--Hougaard copula (fourth row of graphs) for $n \in \{10,20,30,40\}$. The abbreviations used in the legends are those defined in Section~\ref{sec:suggested}.}
\end{center}
\end{figure}

In a first series of experiments, we considered $d \in \{2,3\}$ and $C$ to be either the $d$-dimensional Clayton, Gumbel--Hougaard or Frank copula whose bivariate margins have a Kendall's tau of $\tau$ and we estimated the integrated mean squared error of the aforementioned estimators for $n \in \{10,20,30,40\}$ and for $\tau$ in either $\{0,0.1,\dots,0.9\}$ or in $\{-0.9,-0.8,\dots,0\}$. The estimated integrated mean squared errors are given against Kendall's tau in the panels of Fig.~\ref{fig:rho:tau} in which the following abbreviations are used:
\begin{itemize}
\item ``e.b.c.'' for ``empirical beta copula'',
\item ``beta-bin. rho = 2 / indep'' for the estimator $C_{1:n}^{\sss{\bar \Bc, \Pi}}$ with $\rho = 2$,
\item ``beta-bin. rho = 2 / e.b.c.'' for the estimator $C_{1:n}^{\sss{\bar \Bc, C_{1:n}^\Beta}}$ with $\rho = 2$,
\item ``beta rho = 2 / indep'' for the estimator $C_{1:n}^{\sss{\bar \beta, \Pi}}$ with $\rho = 2$,
\item ``beta rho = 2 / e.b.c.'' for the estimator $C_{1:n}^{\sss{\bar \beta, C_{1:n}^\Beta}}$ with $\rho = 2$,
\end{itemize}
and similarly for the estimators with $\rho = 4$. 

The following conclusions can be drawn from Fig.~\ref{fig:rho:tau}:
\begin{itemize}
\item the estimators $C_{1:n}^{\sss{\bar \Bc, C_{1:n}^\Beta}}$ and $C_{1:n}^{\sss{\bar \beta, C_{1:n}^\Beta}}$ with $\rho \in \{2,4\}$ are uniformly better than the empirical beta copula in terms of integrated mean squared error for the data generating models under consideration; setting $\rho$ to 4 is overall better than setting $\rho=2$, except when, approximately, $n \leq 30$ and $|\tau| \geq 0.6$,
\item as expected from Section~\ref{sec:surv:cop} and the experiments reported in Section~\ref{sec:influence}, all other settings being equal, the estimators for which $\bar \Cc^{\sss \Xc_{1:n}} = \Pi$ are better than their analogs for which $\bar \Cc^{\sss \Xc_{1:n}} = C_{1:n}^\Beta$ when $\tau$ is close to~0; their advantage, particularly large for small $n$, decreases as $n$ increases, that is, as the performance of the pilot estimator $C_{1:n}^\Beta$ improves,
\item as expected from the experiments reported in Section~\ref{sec:influence}, setting $\rho = 0.5$ in the expressions of $C_{1:n}^{\sss{\bar \beta, C_{1:n}^\Beta}}$ and $C_{1:n}^{\sss{\bar \beta, \Pi}}$ can have advantages only when $|\tau|$ is large and $n$ is small,
\item as can be seen from the first and second rows of plots (and from other non reported results), for the same value of $\rho > 1$, there is hardly any difference between the estimators $C_{1:n}^{\sss{\bar \Bc, C_{1:n}^\Beta}}$ and $C_{1:n}^{\sss{\bar \beta, C_{1:n}^\Beta}}$ (resp.\ $C_{1:n}^{\sss{\bar \Bc, \Pi}}$ and $C_{1:n}^{\sss{\bar \beta, \Pi}}$) in terms of integrated mean squared error.
\end{itemize}
Note that we have no explanations for the somehow surprising behavior occurring in the third line of graphs when $\tau$ changes from $-0.8$ to $-0.9$.

In a last series of experiments, we compared the integrated mean squared error of $C_{1:n}^\Beta$ with that of $C_{1:n}^{\sss{\bar \beta, C_{1:n}^\Beta}}$, $C_{1:n}^{\sss{\bar \beta, \Pi}}$ $C_{1:n}^{\sss{\bar \Bc, C_{1:n}^\Beta}}$ and $C_{1:n}^{\sss{\bar \Bc, \Pi}}$ for $\rho \in \{2,4\}$ for various bivariate and trivariate data-generating models and $n \in \{10,20,\dots,100\}$. Specifically, we considered:
\begin{itemize}
\item bivariate normal copulas with a Kendall's tau $\tau \in \{-0.75,-0.5,-0.25,0.25,0.5,0.75\}$,
\item non-exchangeable bivariate Khoudraji--Clayton copulas with first shape parameter $s_1 \in \{0.4, 0.6,0.8\}$, second shape parameter $s_2 = 0.95$ and Clayton copula parameter equal to 6 (see Section 3.4.2 in \cite{HofKojMaeYan18} and Fig.~3.19 therein in particular),
\item trivariate Gumbel--Hougaard copulas whose bivariate margins have a Kendall's tau $\tau \in \{0.25,0.5,0.75\}$,
\item the trivariate $t$ copula with 4 degrees of freedom and pairwise correlation parameters $\rho_{12} = -0.2$, $\rho_{13} = 0.5$ and $\rho_{23} = 0.4$ as in \cite{SegSibTsu17},
\item the trivariate nested Archimedean copula with Frank generators and with a Kendall's tau equal to 0.3 at the upper node and 0.6 at the lower node as in \cite{SegSibTsu17},
\item the trivariate nested Archimedean copula with Frank generators and with a Kendall's tau equal to 0.5 at the upper node and 0.8 at the lower node.
\end{itemize}

\begin{figure}[t!]
\begin{center}
  \includegraphics*[width=0.84\linewidth]{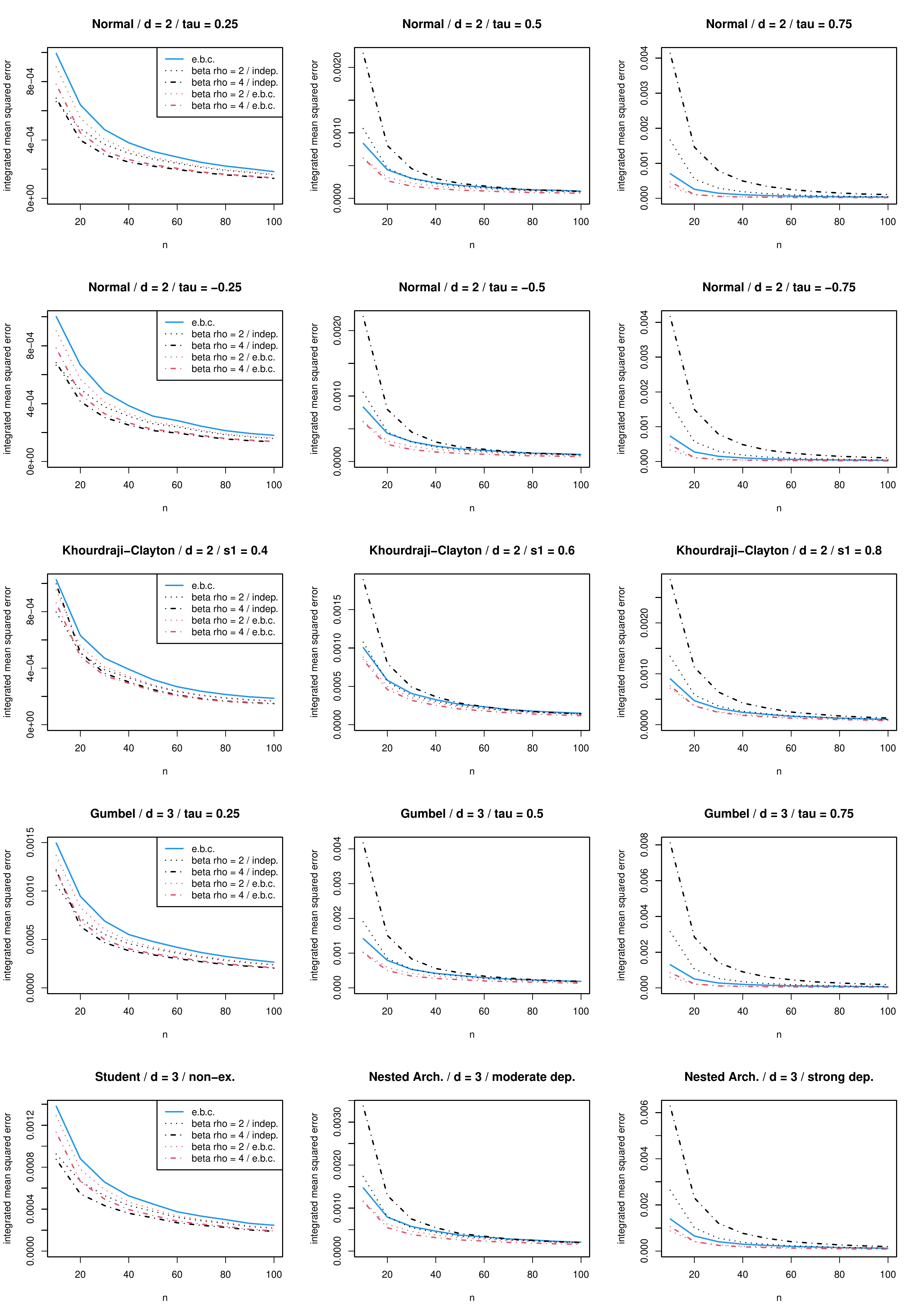}
  \caption{\label{fig:all:models} Estimated integrated mean squared errors against $n \in \{10,20, \dots, 100\}$ of five estimators of $C$ for the bivariate and trivariate data-generating models considered in Section~\ref{sec:suggested}.}
\end{center}
\end{figure}

\begin{figure}[t!]
\begin{center}
  \includegraphics*[width=0.83\linewidth]{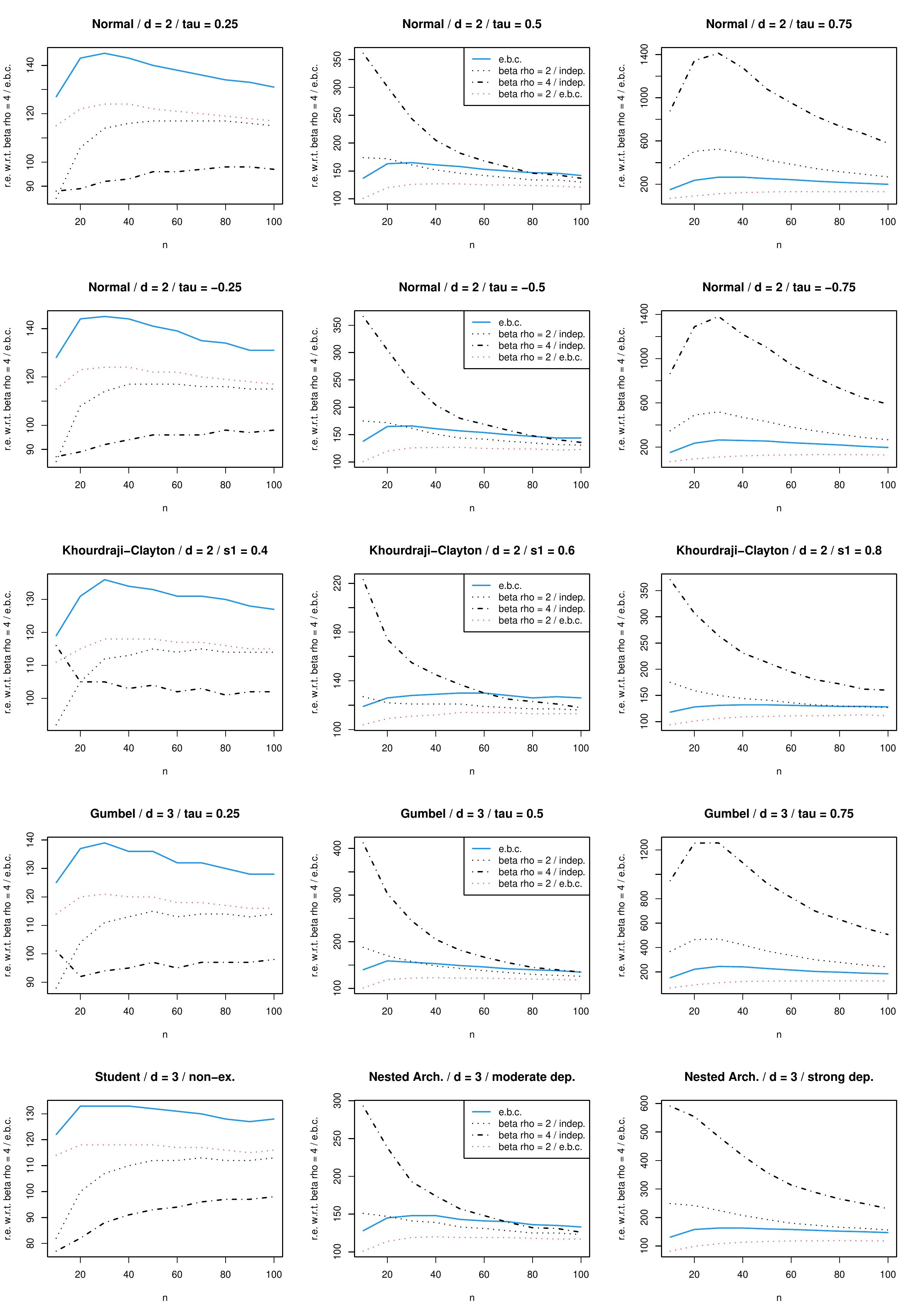}
  \caption{\label{fig:all:models:2} Estimated relative efficiencies (r.e.) with respect to the estimator $C_{1:n}^{\sss{\bar \beta, C_{1:n}^\Beta}}$ with $\rho = 4$ against $n \in \{10,20, \dots, 100\}$ for the four other estimators of $C$ appearing in Fig.~\ref{fig:all:models} for the bivariate and trivariate data-generating models considered in Section~\ref{sec:suggested}.}
\end{center}
\end{figure}

The estimated integrated mean squared errors are plotted against $n \in \{10,20, \dots, 100\}$ in the graphs of Fig.~\ref{fig:all:models} for the estimators (other than $C_{1:n}^\Beta$) that have beta smoothing survival margins. As already mentioned, the estimated integrated mean squared errors of the estimators having scaled beta-binomial smoothing survival margins (all other settings being equal) are almost identical. Again, as can be seen from Fig.~\ref{fig:all:models}, the estimators $C_{1:n}^{\sss{\bar \Bc, C_{1:n}^\Beta}}$ and $C_{1:n}^{\sss{\bar \beta, C_{1:n}^\Beta}}$ with $\rho \in \{2,4\}$ are uniformly better than the empirical beta copula in terms integrated mean squared error for all the data generating models under consideration, the best of the two being overall the one with $\rho = 4$, except for very small $n$. For an easier evaluation of the possible gain resulting from the use of the latter estimator, Fig.~\ref{fig:all:models:2} reports estimated relative efficiencies with respect to the estimator $C_{1:n}^{\sss{\bar \beta, C_{1:n}^\Beta}}$ with $\rho = 4$ against $n \in \{10,20, \dots, 100\}$ (the relative efficiency of an estimator with respect to $C_{1:n}^{\sss{\bar \beta, C_{1:n}^\Beta}}$ with $\rho = 4$ is simply the ratio of its integrated mean squared error to that of $C_{1:n}^{\sss{\bar \beta, C_{1:n}^\Beta}}$ with $\rho = 4$ multiplied by 100). As already observed in the previous set of experiments, the estimators $C_{1:n}^{\sss{\bar \Bc, \Pi}}$ and $C_{1:n}^{\sss{\bar \beta, \Pi}}$ with $\rho \in \{2,4\}$ can be of interest only when the true copula is close to the independence copula $\Pi$.


\section{Asymptotics of related sequential empirical processes}
\label{sec:asym}

Recall the definition given in~\eqref{eq:C:1n:nu} of the smooth, possibly data-adaptive empirical copulas $C_{1:n}^\nu$ investigated in this work. Conditions under which empirical processes of the form $\sqrt{n}(C_{1:n}^\nu - C)$ converge weakly will be a consequence of the sequential results to be stated in this section. The reason for considering this more general setting follows from our intention to apply the studied smooth estimators in change-point analysis in a forthcoming project.

Before providing the asymptotic theory for the related sequential empirical processes, we start by briefly explaining how the quantities introduced in Section~\ref{sec:class} can be defined from substretches of the available stretch of observations $\Xc_{1:n} = (\bm X_1,\dots,\bm X_n)$. Given a substretch $\Xc_{k:l} = (\bm X_k,\dots,\bm X_l)$, $1 \leq k \leq l \leq n$, let $F_{k:l,j}$ be the empirical d.f.\ computed from the $j$th component subsample $X_{kj},\dots,X_{lj}$, $j \in \{1,\dots,d\}$ of $\Xc_{k:l}$. Then, let $R_{ij}^{k:l} = (l-k+1) F_{k:l,j}(X_{ij})$ be the (maximal) rank of $X_{ij}$ among $X_{kj},\dots,X_{lj}$. Furthermore, let $\bm R^{k:l}_i =  \left(R_{i1}^{k:l}, \dots, R_{id}^{k:l} \right)$ and $\pobs{U}^{k:l}_i = \bm R^{k:l}_i / (l-k+1)$, $i \in \{k,\dots,l\}$, be the multivariate ranks and the multivariate scaled ranks, respectively, obtained from $\Xc_{k:l}$. The empirical copula $C_{k:l}$ of $\Xc_{k:l}$ at $\bm u = (u_1,\dots,u_d) \in [0,1]^d$ is then
\begin{equation}
\label{eq:C:kl}
C_{k:l}(\bm u) = \frac{1}{l-k+1} \sum_{i=k}^l  \prod_{j=1}^d \1\left( \frac{R_{ij}^{k:l}}{l-k+1} \leq u_j \right)
= \frac{1}{l-k+1} \sum_{i=k}^l  \1(\pobs{U}^{k:l}_i \leq \bm u),
\end{equation}
and, by analogy with~\eqref{eq:C:1n:nu}, the corresponding smooth version $C_{k:l}^\nu$ based on the possibly random smoothing distributions $\nu_{\bm u}^{\sss \Xc_{k:l}}$, $\bm u \in [0,1]^d$, is defined by
\begin{equation}
  \label{eq:C:kl:nu}
  C_{k:l}^\nu(\bm u) = \int_{[0,1]^d} C_{k:l}(\bm w) \dd \nu_{\bm u}^{\sss \Xc_{k:l}}(\bm w), 
  \qquad \bm u \in [0,1]^d.
\end{equation}
Note that, if $k > l$, we adopt the convention that $C_{k:l} =  C_{k:l}^\nu = 0$ and that, for any $\bm u \in [0,1]^d$, $\nu_{\bm u}^{\sss \Xc_{k:l}}$ is the Dirac measure at $\bm u$.

We can now define the corresponding sequential empirical copula processes which are the main focus of this section. Let $\Delta = \{(s,t) \in [0,1]^2 : s \leq t \}$ and let $\lambda_n(s,t) = (\ip{nt} - \ip{ns})/n$, $(s, t) \in \Delta$. The two-sided sequential empirical copula process was defined in \cite{BucKoj16} by
\begin{equation}
  \label{eq:Cb:n}
  \Cb_n(s,t,\bm u) = \sqrt{n}\lambda_n(s,t)\{C_{\ip{ns} +1 :\ip{nt}}(\bm u)-C(\bm u)\}, \qquad (s,t) \in \Delta, \bm u \in [0,1]^d,
\end{equation}
where $C_{\ip{ns} +1 :\ip{nt}}$ is generically defined in~\eqref{eq:C:kl}. The two-sided sequential empirical copula process corresponding to the smooth empirical copula in~\eqref{eq:C:kl:nu} is then naturally defined by
\begin{equation}
\label{eq:Cb:n:nu}
\Cb_n^\nu(s,t,\bm u)=\sqrt{n}\lambda_n(s,t)\{C_{\ip{ns} +1 :\ip{nt} }^\nu(\bm u)-C(\bm u)\}, \qquad (s,t) \in \Delta, \bm u \in [0,1]^d.
\end{equation}
The aim of this section is to establish the asymptotics of the latter.

As frequently done in the literature, we consider the following nonrestrictive condition on the unknown copula $C$ initially proposed in \cite{Seg12}.

\begin{cond}[Smooth partial derivatives]
\label{cond:pd}
For any $j \in \{1,\dots,d\}$, the partial derivative $\dot C_j = \partial C/\partial u_j$ exists and is continuous on the set $\{ \bm u \in [0, 1]^d : u_j \in (0,1) \}$.
\end{cond}

The latter condition can be considered nonrestrictive because, as shall become clear in the forthcoming developments, it is necessary for the candidate weak limit of both $\Cb_n$ and $\Cb_n^\nu$ to exist pointwise and have continuous trajectories. In the rest of the paper, following~\cite{BucVol13}, for any $j \in \{1,\dots,d\}$, $\dot C_j$ is arbitrarily defined to be zero on the set $\{ \bm{u} \in [0, 1]^d : u_j \in \{0,1\} \}$, which implies that, under Condition~\ref{cond:pd}, $\dot C_j$ is defined on the whole of $[0, 1]^d$.

In view of the mathematical derivations carried out in \cite{SegSibTsu17} to study the asymptotics of the empirical beta copula process and with~\eqref{eq:var:W} in mind, it seems particularly meaningful to impose the following condition on the smoothing distributions involved in the definition of the smooth empirical copulas under consideration.

\begin{cond}[Variance condition]
  \label{cond:var:W}
  There exists a constant $\kappa > 0$ such that, for any $n \in \N$, $\bm x \in (\R^d)^n$, $\bm u \in [0,1]^d$ and $j \in \{1,\dots,d\}$, $\Var( W_{j,u_j}^{\bm x}) \leq \kappa u_j(1-u_j) / n$. 
\end{cond}

\begin{remark}
As verified at the beginning of Section~\ref{sec:surv:marg:dis}, the previous condition is clearly satisfied if, for any $n \in \N$, $\bm x \in (\R^d)^n$, $\bm u \in [0,1]^d$ and $j \in \{1,\dots,d\}$, $W_{j,u_j}^{\bm x} = S_{n,j,u_j} / n$, where $S_{n,j,u_j}$ is Binomial$(n,u_j)$. From~\eqref{eq:beta-binomial:var:2} and~\eqref{eq:beta:var:2}, we see that it will also be satisfied for smoothing distributions with scaled beta-binomial and beta survival margins as defined in Sections~\ref{sec:surv:marg:dis} and~\ref{sec:beta:margin} provided that the dispersion parameter $\rho$ is constant. For versions of these models in which $\rho$ is data-adaptive, one needs to assume that there exists a constant $\gamma > 0$, such for any $n \in \N$ and $\bm x \in (\R^d)^n$, the dispersion parameter $\rho = \rho^{\bm x}$ (which may thus depend on $\bm x$) is bounded by $\gamma$.
\end{remark}

In the rest of this section, unless stated otherwise, all convergences are with respect to $n \to \infty$. Furthermore, the arrow~`$\leadsto$' denotes weak convergence in the sense of Definition~1.3.3 in \cite{vanWel96} and, given a set $T$, $\ell^\infty(T)$ (resp.\ $\Cc(T)$) represents the space of all bounded (resp.\ continuous) real-valued functions on $T$ equipped with the uniform metric. The following result is proven in Appendix~\ref{proof:thm:Cb:n:nu}.

\begin{thm}[Asymptotics of $\Cb_n^\nu$]
\label{thm:Cb:n:nu}
Assume that Conditions~\ref{cond:pd} and~\ref{cond:var:W} hold, and that $\Cb_n \leadsto \Cb_C$ in $ \ell^\infty(\Delta \times [0,1]^d)$, where the trajectories of the limiting process $\Cb_C$ are continuous almost surely. Then,
$$
\sup_{(s,t) \in \Delta \atop \bm u \in [0,1]^d} |\Cb_n^\nu(s,t,\bm u) - \Cb_n(s,t,\bm u) | = o_\Pr(1).
$$
Consequently,  $\Cb_n^\nu \leadsto \Cb_C$ in $\ell^\infty(\Delta \times[0,1]^d)$.
\end{thm}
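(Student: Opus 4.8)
The plan is to prove the uniform negligibility statement directly, since the final weak convergence then follows from a standard asymptotic-equivalence (Slutsky-type) argument. Write $k=\ip{ns}+1$, $l=\ip{nt}$ and $m=l-k+1=\ip{nt}-\ip{ns}$, so that $\sqrt{n}\lambda_n(s,t)=m/\sqrt{n}$. Because the two processes subtract the same $C(\bm u)$, and because $\nu_{\bm u}^{\sss\Xc_{k:l}}$ is a probability measure, \eqref{eq:C:kl:nu} gives
\[
\Cb_n^\nu(s,t,\bm u)-\Cb_n(s,t,\bm u)=\frac{m}{\sqrt{n}}\int_{[0,1]^d}\{C_{k:l}(\bm w)-C_{k:l}(\bm u)\}\,\dd\nu_{\bm u}^{\sss\Xc_{k:l}}(\bm w).
\]
First I would split $C_{k:l}=C+(C_{k:l}-C)$, producing a deterministic-given-$\Xc_{k:l}$ ``bias'' term $B_{k:l}(\bm u)=\int\{C(\bm w)-C(\bm u)\}\dd\nu_{\bm u}^{\sss\Xc_{k:l}}(\bm w)$ and a ``fluctuation'' term. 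Since $\Cb_n(s,t,\cdot)=(m/\sqrt{n})(C_{k:l}-C)$, the scaled fluctuation term is \emph{exactly} $\int\{\Cb_n(s,t,\bm w)-\Cb_n(s,t,\bm u)\}\dd\nu_{\bm u}^{\sss\Xc_{k:l}}(\bm w)$, and the structural input throughout is the conditional mean-preservation $\int w_j\,\dd\nu_{\bm u}^{\sss\Xc_{k:l}}(\bm w)=u_j$.

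The fluctuation term is the cleaner of the two. Fixing $\delta>0$, I would split its integral according to whether $\|\bm w-\bm u\|_\infty\le\delta$ or not. On the near region the integrand is at most the modulus $\omega_\delta(\Cb_n)=\sup\{|\Cb_n(s,t,\bm w)-\Cb_n(s,t,\bm u)|:(s,t)\in\Delta,\ \|\bm w-\bm u\|_\infty\le\delta\}$, which by the assumed weak convergence $\Cb_n\leadsto\Cb_C$ to a limit with almost surely uniformly continuous trajectories satisfies $\lim_{\delta\downarrow0}\limsup_n\Pr(\omega_\delta(\Cb_n)>\eps)=0$ (asymptotic equicontinuity). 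On the far region I would use the crude bound $|\Cb_n(s,t,\bm w)-\Cb_n(s,t,\bm u)|\le 2m/\sqrt{n}$ (as $|C_{k:l}-C|\le1$) together with the Chebyshev-type estimate $\nu_{\bm u}^{\sss\Xc_{k:l}}(\|\bm w-\bm u\|_\infty>\delta)\le\sum_j\Var(W_{j,u_j}^{\sss\Xc_{k:l}})/\delta^2\le d\kappa/(4m\delta^2)$, which is Condition~\ref{cond:var:W} applied with sample size $m$. The crucial point is that the factor $m$ cancels, leaving a contribution $d\kappa/(2\delta^2\sqrt{n})=O(n^{-1/2})$ uniformly in $(s,t,\bm u)$ for each fixed $\delta$; letting $n\to\infty$ and then $\delta\downarrow0$ shows the fluctuation term is $o_\Pr(1)$ uniformly.

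The scaled bias term $(m/\sqrt{n})B_{k:l}(\bm u)$ is the delicate part. By mean-preservation, $B_{k:l}(\bm u)=\int\{C(\bm w)-C(\bm u)-\sum_j\dot C_j(\bm u)(w_j-u_j)\}\dd\nu_{\bm u}^{\sss\Xc_{k:l}}(\bm w)$, a genuinely second-order quantity that must be shown to be $o(n^{-1/2})$ uniformly. On any interior box $\{u_j\in[\eta,1-\eta]\ \forall j\}$, and for $\delta<\eta$, a first-order Taylor expansion of $C$ along the segment from $\bm u$ to $\bm w$ (valid there by Condition~\ref{cond:pd}) bounds the integrand by $\sum_j\omega^{(j)}_{C,\eta}(\|\bm w-\bm u\|_\infty)\,|w_j-u_j|$, where $\omega^{(j)}_{C,\eta}$ is the modulus of continuity of the uniformly continuous map $\dot C_j$; splitting as above and using $\int|w_j-u_j|\dd\nu\le\sqrt{\Var(W_{j,u_j}^{\sss\Xc_{k:l}})}\le\sqrt{\kappa/(4m)}$ on the near part and the $1$-Lipschitz bound times the Chebyshev tail on the far part yields, after multiplication by $m/\sqrt{n}\le\sqrt{\lambda_n}\le1$, a bound that vanishes as $n\to\infty$ then $\delta\downarrow0$. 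The hard part will be the boundary region where some $u_j$ is close to $0$ or $1$: there $\dot C_j$ need not be continuous and the segment argument breaks down, while the crude $1$-Lipschitz bound on the remaining, non-boundary coordinates is too lossy. I would resolve this exactly as in \cite{Seg12} and \cite{SegSibTsu17}, exploiting the pinching of $C$ near the faces of $[0,1]^d$ (through the Fréchet bounds and the vanishing of the relevant partial derivatives) together with the fact that Condition~\ref{cond:var:W} forces $\Var(W_{j,u_j}^{\sss\Xc_{k:l}})\le\kappa u_j(1-u_j)/m$ to vanish at the same rate the margin approaches the boundary; these two effects cancel and keep the boundary contribution $o(1)$ after the $m/\sqrt{n}$ scaling.

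Combining the two estimates gives $\sup_{(s,t)\in\Delta,\ \bm u\in[0,1]^d}|\Cb_n^\nu(s,t,\bm u)-\Cb_n(s,t,\bm u)|=o_\Pr(1)$, which is the first assertion. For the ``consequently'' part, since $\Cb_n\leadsto\Cb_C$ in $\ell^\infty(\Delta\times[0,1]^d)$ and $\Cb_n^\nu$ differs from $\Cb_n$ by a term that is $o_\Pr(1)$ in the uniform norm, the conclusion $\Cb_n^\nu\leadsto\Cb_C$ follows from a standard asymptotic-equivalence result (see, e.g., \cite{vanWel96}). I expect the boundary analysis of the bias term to be the only substantial obstacle, the fluctuation term being handled essentially automatically once $\Cb_n\leadsto\Cb_C$ is available.
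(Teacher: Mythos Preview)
Your decomposition into a fluctuation term $\int\{\Cb_n(s,t,\bm w)-\Cb_n(s,t,\bm u)\}\dd\nu_{\bm u}^{\sss\Xc_{k:l}}(\bm w)$ and a bias term $(m/\sqrt{n})\int\{C(\bm w)-C(\bm u)\}\dd\nu_{\bm u}^{\sss\Xc_{k:l}}(\bm w)$ is exactly the paper's (Lemmas~\ref{lem:stochastic} and~\ref{lem:bias}), and the tools you invoke---asymptotic equicontinuity on the near set, Chebyshev plus Condition~\ref{cond:var:W} on the far set, the Taylor expansion along the segment, and the $u_j(1-u_j)$ factor near the boundary---are the same. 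Your handling of the far set in the fluctuation term is in fact a small but genuine simplification: by using the crude pointwise bound $|\Cb_n(s,t,\bm w)-\Cb_n(s,t,\bm u)|\le 2m/\sqrt{n}$ you get a direct cancellation with the Chebyshev tail $d\kappa/(4m\delta^2)$, whereas the paper only uses $\sup|\Cb_n|=O_\Pr(1)$ and therefore needs an additional split on $t-s\le\delta$ versus $t-s>\delta$ (the tail bound does not decay when $m$ stays bounded). The same bookkeeping, if carried consistently through the bias term, also makes the paper's preliminary split into $l-k\le a_n$ versus $l-k>a_n$ unnecessary.

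There is, however, one slip you should fix. The inequality ``$m/\sqrt{n}\le\sqrt{\lambda_n}\le 1$'' is false: $m/\sqrt{n}=\sqrt{n}\,\lambda_n$ can be as large as $\sqrt{n}$. What actually makes the interior bias estimate work is the cancellation you set up one line earlier: $(m/\sqrt{n})\,\sqrt{\kappa/(4m)}=\sqrt{\kappa\lambda_n}/2\le\sqrt{\kappa}/2$, which is bounded, and similarly $(m/\sqrt{n})\cdot\sqrt{d\kappa/(4m\delta^2)}\cdot\sqrt{\kappa/(4m)}=O(1/\sqrt{n})$ on the far set. Once this is corrected, and the boundary argument is written out along the lines you indicate (it is precisely the paper's $J_{j,n,\delta}$ term, where $\sqrt{m}\cdot\sqrt{\kappa u_j(1-u_j)/m}=\sqrt{\kappa u_j(1-u_j)}$ is made small by taking $u_j$ close to $0$ or $1$), the proof goes through.
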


Roughly speaking, the smooth sequential empirical copula process $\Cb_n^\nu$ in~\eqref{eq:Cb:n:nu} is asymptotically equivalent to the classical sequential empirical copula process $\Cb_n$ in~\eqref{eq:Cb:n} when the latter converges weakly to a limiting process whose trajectories are continuous almost surely. From Section~3 in \cite{BucKoj16}, a key assumption for such a convergence to hold concerns the weak convergence of the two-sided sequential empirical process
\begin{equation}
\label{eq:Bb:n}
  \Bb_n(s, t, \bm{u}) = \frac{1}{\sqrt{n}} \sum_{i=\ip{ns}+1}^{\ip{nt}} \{\1(\bm U_i \leq \bm{u}) - C(\bm{u}) \}, \qquad (s, t,\bm{u}) \in \Delta \times [0, 1]^d,
\end{equation}
where $\bm U_1,\dots,\bm U_n$ is the unobservable sample obtained from $\Xc_{1:n} = (\bm X_1, \dots, \bm X_n)$ by the probability integral transformations $U_{ij} = F_j(X_{ij})$, $i \in \{1,\dots,n\}$, $j \in \{1,\dots,d\}$, and with the convention that $\Bb_n(s, t, \cdot) = 0$ if $\ip{nt} - \ip{ns} = 0$. Specifically, the following condition was considered in \cite{BucKoj16}.

\begin{cond}[Weak convergence of $\Bb_n(0, \cdot, \cdot)$]
\label{cond:Bn}
The sequential empirical process $\Bb_n(0, \cdot, \cdot)$ converges weakly in $\ell^\infty([0,1]^{d+1})$ to a tight centered Gaussian process $\Zb_C$ concentrated on
\begin{align*}
\{ f \in \Cc([0,1]^{d+1}) : f(s,\bm u) = 0 \text{ if one of the components of $(s,\bm u)$ is 0, and } f(s,1,\dots,1) = 0 \text{ for all } s \in (0,1] \}.
\end{align*}
\end{cond}

Under Condition~\ref{cond:Bn}, it immediately follows from the continuous mapping theorem that $\Bb_n \leadsto \Bb_C$ in $\ell^\infty(\Delta \times [0, 1]^d)$, where
\begin{equation}
\label{eq:Bb:C}
  \Bb_C(s, t,\bm{u}) = \Z_C(t,\bm{u}) - \Z_C(s,\bm{u}),   \qquad (s, t,\bm{u}) \in \Delta \times [0,1]^d.
\end{equation}

When the underlying stationary time series $(\bm X_i)_{i \in \Z}$ consists of independent random vectors, Condition~\ref{cond:Bn} is a direct consequence, for instance, of Theorem 2.12.1 in \cite{vanWel96}. More generally, Condition~\ref{cond:Bn} also holds when $(\bm X_i)_{i \in \Z}$ is strongly mixing. Given a stationary time series $(\bm Y_i)_{i \in \Z}$, denote by $\Fc_j^k$ the $\sigma$-field generated by $(\bm Y_i)_{j \leq i \leq k}$, $j, k \in \Z \cup \{-\infty,+\infty \}$, and recall that the strong mixing coefficients corresponding to the stationary sequence $(\bm Y_i)_{i \in \Z}$ are then defined by
\begin{equation*}
\alpha_r^{\bm Y} = \sup_{A \in \Fc_{-\infty}^0,B\in \Fc_{r}^{+\infty}} \big| \Pr(A \cap B) - \Pr(A) \Pr(B) \big|, \qquad r \in \N, \, r > 0,
\end{equation*}
with  $\alpha_0^{\bm Y} = 1/2$ and that the sequence $(\bm Y_i)_{i \in \Z}$ is said to be strongly mixing if $\alpha_r^{\bm Y} \to 0$ as $r \to \infty$. Then, if the stretch $\bm U_1,\dots,\bm U_n$ is drawn from a time series $(\bm U_i)_{i \in \Z}$ whose strong mixing coefficients satisfy $\alpha_r^{\bm U} = O(r^{-a})$ with $a > 1$ (which occurs if the strong mixing coefficients of the time series $(\bm X_i)_{i \in \Z}$ satisfy $\alpha_r^{\bm X} = O(r^{-a})$ with $a > 1$), Theorem~1 in \cite{Buc15} implies that Condition~\ref{cond:Bn} holds with the covariance function of the process $\Zb_C$ being
$$
\Cov\{\Zb_C(s,\bm{u}), \Zb_C(t,\bm v)\} = \min(s,t) \sum_{k \in \Z} \Cov\{ \1(\bm U_0 \leq \bm{u}), \1(\bm U_k \leq \bm v) \}.
$$

Recall that, for any $j \in \{1,\dots,d\}$ and any $\bm{u} \in [0, 1]^d$, $\bm{u}^{(j)}$ is the vector of $[0, 1]^d$ defined by $u^{(j)}_i = u_j$ if $i = j$ and 1 otherwise. The following result is then an immediate consequence of Theorem~3.4 in \cite{BucKoj16} and Proposition~3.3 of \cite{BucKojRohSeg14}, and can be used to obtain a straightforward corollary of Theorem~\ref{thm:Cb:n:nu}.

\begin{thm}[Asymptotics of $\Cb_n$]
Under Conditions~\ref{cond:pd} and~\ref{cond:Bn},
\begin{equation*}
  \sup_{(s, t,\bm{u})\in \Delta \times [0, 1]^d} \left| \Cb_n(s, t, \bm{u}) - \tilde \Cb_n(s, t, \bm{u}) \right| = o_\Pr(1),
\end{equation*}
where
\begin{equation*}
\tilde \Cb_n(s, t, \bm{u}) = \Bb_n(s, t, \bm{u}) - \sum_{j=1}^d \dot C_j(\bm{u}) \, \Bb_n(s, t, \bm{u}^{(j)}), \qquad (s, t, \bm{u}) \in \Delta \times [0, 1]^d,
\end{equation*}
and $\Bb_n$ is defined in~\eqref{eq:Bb:n}. Consequently, $\Cb_n \leadsto \Cb_C$ in $\ell^\infty(\Delta \times [0, 1]^d)$, where
\begin{equation*}
\Cb_C(s, t, \bm{u})  = \Bb_C(s, t, \bm{u}) - \sum_{j=1}^d \dot C_j(\bm{u}) \, \Bb_C(s, t, \bm{u}^{(j)}), \qquad (s, t, \bm{u}) \in \Delta \times [0, 1]^d,
\end{equation*}
and $\Bb_C$ is defined in~\eqref{eq:Bb:C}.
\end{thm}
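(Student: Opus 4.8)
The plan is to treat this exactly as the excerpt advertises it, namely as an assembly of two existing results, so the work reduces to matching hypotheses and then invoking the continuous mapping theorem for the weak-convergence conclusion. First I would recall the structural fact underlying everything: for $1 \le k \le l \le n$, the empirical copula $C_{k:l}$ in~\eqref{eq:C:kl} is a fixed functional of the (sequential) joint empirical distribution function together with its $d$ univariate margins, so the two-sided process $\Cb_n$ in~\eqref{eq:Cb:n} is the image of the sequential empirical process $\Bb_n$ in~\eqref{eq:Bb:n} under the copula functional. The representation $\sup |\Cb_n - \tilde\Cb_n| = o_\Pr(1)$ is then the sequential analogue of the classical linearization of the empirical copula process obtained in \cite{Seg12}: under the smoothness Condition~\ref{cond:pd}, the copula functional is Hadamard-differentiable tangentially to the continuous functions, with derivative $f \mapsto f(\cdot,\cdot,\bm u) - \sum_{j=1}^d \dot C_j(\bm u)\, f(\cdot,\cdot,\bm u^{(j)})$, and the remainder in the associated delta-method expansion is uniformly negligible thanks to Condition~\ref{cond:Bn}.

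To make this rigorous in the two-sided sequential setting I would cite the two results directly. Proposition~3.3 of \cite{BucKojRohSeg14} provides the underlying functional linearization of the copula map, incorporating the convention $\dot C_j \equiv 0$ on $\{\bm u \in [0,1]^d : u_j \in \{0,1\}\}$, while Theorem~3.4 in \cite{BucKoj16} carries this over to the two-sided process indexed by $\lambda_n(s,t)$ and furnishes the uniform negligibility of the remainder over $(s,t) \in \Delta$ under precisely Conditions of the type~\ref{cond:pd} and~\ref{cond:Bn}. Combining the two yields the displayed $o_\Pr(1)$ statement; beyond verifying that our Conditions~\ref{cond:pd} and~\ref{cond:Bn} match the hypotheses used there, no genuinely new calculation is required.

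For the ``consequently'' part I would argue by the continuous mapping theorem. Consider the linear map $\Phi$ on $\ell^\infty(\Delta \times [0,1]^d)$ given by $(\Phi f)(s,t,\bm u) = f(s,t,\bm u) - \sum_{j=1}^d \dot C_j(\bm u)\, f(s,t,\bm u^{(j)})$. Since each partial derivative of a copula satisfies $|\dot C_j| \le 1$, one has $\|\Phi f\|_\infty \le (1+d)\|f\|_\infty$, so $\Phi$ is bounded and hence continuous. As already noted just before the statement, Condition~\ref{cond:Bn} together with the continuous mapping theorem gives $\Bb_n \leadsto \Bb_C$ in $\ell^\infty(\Delta \times [0,1]^d)$; applying continuous mapping once more yields $\tilde\Cb_n = \Phi(\Bb_n) \leadsto \Phi(\Bb_C) = \Cb_C$, and the asymptotic equivalence just established transfers this to $\Cb_n \leadsto \Cb_C$. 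Here Condition~\ref{cond:pd} is what guarantees that the limit $\Cb_C$ has continuous trajectories, since $\Bb_C$ does and the $\dot C_j$ are continuous on the interior while the marginal correction vanishes on the boundary.

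The main obstacle, and the only nontrivial ingredient, is the boundary behaviour of the partial derivatives: on $\{\bm u : u_j \in \{0,1\}\}$ the derivative $\dot C_j$ need not exist and is set to zero by convention, so one must check that the linearization remains uniformly valid, and the limit remains continuous, right up to this boundary. This is exactly the delicate point handled by Proposition~3.3 of \cite{BucKojRohSeg14}; once it is invoked, everything else is bookkeeping and two applications of the continuous mapping theorem.
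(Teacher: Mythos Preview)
Your proposal is correct and matches the paper's approach exactly: the paper does not give a separate proof but simply states that the theorem is ``an immediate consequence of Theorem~3.4 in \cite{BucKoj16} and Proposition~3.3 of \cite{BucKojRohSeg14}'', which is precisely the pair of results you invoke. Your additional remarks on the continuous mapping theorem and the boundary handling of $\dot C_j$ are correct elaborations of what the paper leaves implicit.
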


\section{Concluding remarks}

A broad class of smooth, possibly data-adaptive empirical copulas containing the empirical beta copula was studied. A good general choice within this class appears to be the estimator $C_{1:n}^{\sss{\bar \Bc, C_{1:n}^\Beta}}$ generically defined in~\eqref{eq:ec:BetaBin:marg} that uses the empirical beta copula as survival copula of the smoothing distribution and scaled beta-binomial smoothing survival margins with dispersion parameter $\rho = 4$. From Corollary~\ref{cor:copula}, the latter is a genuine copula \ik{in the absence of ties in the component samples}. In our bivariate and trivariate Monte Carlo experiments, it was found to be uniformly better than the empirical beta copula for all the considered data-generating models. However, its better finite-sample performance compared to the empirical beta copula comes at the price of a higher computational cost since this more complex estimator uses the empirical beta copula as a pilot estimator of the true unknown copula. Specifically, we see from~\eqref{eq:ec:BetaBin:marg} that one estimation of the unknown copula at a given point $\bm u \in [0,1]^d$ will require $n$ evaluations of the empirical beta copula. Whether this overhead is acceptable for a given application can be assessed using the \textsf{R} implementation of the estimator available on the web page of the first author.

In addition to these finite-sample results, conditions under which sequential empirical copula processes constructed from the studied general class of smooth, possibly data-adaptive estimators converge weakly were provided. In a forthcoming project, these results will be used to show the asymptotic validity of a sequential dependent multiplier bootstrap \cite{BucKoj16,BucKojRohSeg14} allowing to apply the proposed smooth, possibly data-adaptive empirical copulas to change-point detection in a time series setting. 


\setcounter{section}{0}
\renewcommand{\thesection}{\Alph{section}}

\section{Proofs of Lemma~\ref{lem:beta:mixture}, Proposition~\ref{prop:unif:margin}, Proposition~\ref{prop:mult:df} and Proposition~\ref{prop:mult:df:cont}}
\label{proofs:short}

\begin{proof}[\bf Proof of Lemma~\ref{lem:beta:mixture}]
For any $\bm u \in [0,1]^d$, $n \in \N$ and $r_1,\dots,r_d \in \{1,\dots,n\}$, we have
\begin{align*}
  \int_{[0,1]^d} &\prod_{j=1}^d \1(r_j/n \leq w_j) \dd \mu_{n,\bm u}(\bm w) = \Ex \left\{ \prod_{j=1}^d \1(r_j/n \leq S_{n,j,u_j}/n) \right\} = \prod_{j=1}^d \Ex \{  \1(r_j \leq S_{n,j,u_j}) \} = \prod_{j=1}^d \Pr (S_{n,j,u_j} \geq r_j) \\
  &= \prod_{j=1}^d \{ 1 - \Pr (S_{n,j,u_j} \leq r_j-1) \} = \prod_{j=1}^d \left\{ 1 - \sum_{s=0}^{r_j-1} \binom{n}{s} u_j^s (1 - u_j)^{n-s} \right\} \\
                                                                           &= \prod_{j=1}^d \left\{ \sum_{s=0}^n \binom{n}{s} u_j^s (1 - u_j)^{n-s} - \sum_{s=0}^{r_j-1} \binom{n}{s} u_j^s (1 - u_j)^{n-s} \right\} = \prod_{j=1}^d \sum_{s=r_j}^n \binom{n}{s} u_j^s (1 - u_j)^{n-s} = \prod_{j=1}^d \Beta_{n,r_j}(u_j).
\end{align*}
\end{proof}

\begin{proof}[\bf Proof of Proposition~\ref{prop:unif:margin}]
Fix $j \in \{1,\dots,d\}$ and $\bm u \in [0,1]^d$. Then, from~\eqref{eq:var:W}, all the components of $\bm W_{\bm u^{(j)}}^{\sss\Xc_{1:n}}$ except possibly the $j$th are deterministic and equal to $1$. It follows from~\eqref{eq:K} and~\eqref{eq:C:1n:K} that, almost surely,
\begin{align*}
  C_{1:n}^\nu(\bm u^{(j)}) &= \frac{1}{n} \sum_{i=1}^n \Kc_{\sss{\bm R^{1:n}_i}}^{\sss\Xc_{1:n}}(\bm u^{(j)})
                             = \frac{1}{n} \sum_{i=1}^n \int_{[0,1]^d} \1(R^{1:n}_{ij} / n \leq w_j) \dd\nu_{\bm u}^{\sss \Xc_{1:n}}(\bm w) = \frac{1}{n} \sum_{i=1}^n \int_{[0,1]^d} \1(i / n \leq w_j) \dd\nu_{\bm u}^{\sss \Xc_{1:n}}(\bm w) \\
                           &= \frac{1}{n} \sum_{i=1}^{n} \Ex \left\{ \1 \left( \frac{i}{n} \leq W_{j,u_j}^{\sss \Xc_{1:n}} \right) \mid \Xc_{1:n} \right\} = \Ex \left\{ \frac{1}{n} \sum_{i=1}^{n}  \1 \left( i \leq n W_{j,u_j}^{\sss \Xc_{1:n}} \right) \mid \Xc_{1:n} \right\} = \Ex \left(  \frac{\ip{n W_{j,u_j}^{\sss \Xc_{1:n}}}}{n} \mid \Xc_{1:n} \right).
\end{align*}
\end{proof}

\begin{proof}[\bf Proofs of Propositions~\ref{prop:mult:df} and~\ref{prop:mult:df:cont}]
  We only provide the proof of Proposition~\ref{prop:mult:df} as the proof of Proposition~\ref{prop:mult:df:cont} is very similar.

  Let us check that $C_{1:n}^\nu$ in~\eqref{eq:C:1n:dis} satisfies the four properties listed in Theorem~1.2.11 of \cite{DurSem15} necessary for it to be a multivariate d.f. From~\eqref{eq:K:dis}, under Condition~\ref{cond:smooth:cop}, for any $\bm u \in [0,1]^d$ and $i \in \{1,\dots,n\}$, one has  that, almost surely
$$
\Kc_{\bm R_i^{1:n}}^{\sss \Xc_{1:n}}(\bm u) = \bar \Cc^{\sss \Xc_{1:n}} [ \bar \Fc_{1,u_1}^{\sss \Xc_{1:n}}\{ (R_{i1}^{1:n} - 1) / n\}, \dots, \bar \Fc_{d,u_d}^{\sss \Xc_{1:n}} \{ (R_{id}^{1:n} - 1) / n \} ].
$$
Since, for any $\bm u \in [0,1]^d$ and $j \in \{1,\dots,d\}$, $W_{j,u_j}^{\sss \Xc_{1:n}} = 0$ if the $j$th coordinate of $\bm u$ is zero and~$\bar \Cc^{\sss \Xc_{1:n}}$ is a copula, we obtain that, with  probability 1, $\Kc_{\bm R_i^{1:n}}^{\sss \Xc_{1:n}}(\bm u) = 0$ for all $i \in \{1,\dots,n\}$ if at least one coordinate of $\bm u$ is zero. The latter implies that the estimator $C_{1:n}^\nu$ in~\eqref{eq:C:1n:dis} is grounded. Similarly, if $\bm u = \bm 1 \in [0,1]^d$, with  probability 1 $W_{j,u_j}^{\sss \Xc_{1:n}}  = 1$ for all $j \in \{1,\dots,d\}$ and $\Kc_{\bm R_i^{1:n}}^{\sss \Xc_{1:n}}(\bm u) = 1$ for all $i \in \{1,\dots,n\}$, which implies that $C_{1:n}^\nu(\bm 1) = 1$ almost surely.

From Condition~\ref{cond:smooth:surv:marg} and the fact that~$\bar \Cc^{\bm x}$ is a copula for all $\bm x\in (\R^d)^n$, we have that, for any $\bm x \in (\R^d)^n$, $j \in \{1,\dots,d\}$, $\bm u \in [0,1]^d$ and $\bm w \in [0,1)^d$, the function from $[0,1]$ to $[0,1]$ defined by
$$
t \mapsto \bar \Cc^{\bm x} \{ \bar \Fc_{1,u_1}^{\bm x}(w_1), \dots, \bar \Fc_{j,u_{j-1}}^{\bm x}(w_{j-1}), \bar \Fc_{j,t}^{\bm x}(w_j), \bar \Fc_{j,u_{j+1}}^{\bm x}(w_{j+1}), \dots, \bar \Fc_{d,u_d}^{\bm x} (w_d) \}
$$
is right-continuous. The latter implies that, with  probability 1, for any $j \in \{1,\dots,d\}$ and $\bm u \in [0,1]^d$, the function from $[0,1]$ to $[0,1]$ defined by $t \mapsto C_{1:n}^\nu(u_1,\dots,u_{j-1},t,u_{j+1},\dots,u_d)$ is right-continuous as well.

For any $\bm a,\bm b\in[0,1]^d$ such that $\bm a\le\bm b$, let $(\bm a,\bm b] = (a_1,b_1] \times \dots \times (a_d,b_d]$ and, for any function $H$ on $[0,1]^d$, let the $H$-volume of $(\bm a,\bm b]$ be defined by
\begin{equation*}
  \Delta_{(\bm a,\bm b]}H = \sum_{\bm{i}\in\{0,1\}^d}(-1)^{\sum_{j=1}^d i_j} H \bigl( a_1^{i_1}b_1^{1-i_1},\dots,a_d^{i_d}b_d^{1-i_d} \bigr).
\end{equation*}
Then, some thought reveals that, for any $\bm a,\bm b\in[0,1]^d$ such that $\bm a \leq \bm b$, Condition~\ref{cond:smooth:surv:marg} and the fact that~$\bar \Cc^{\bm x}$ is $d$-increasing \cite[see, e.g.,][Definition 1.2.9]{DurSem15} for all $\bm x \in (\R^d)^n$ imply that, with  probability 1, $\Delta_{(\bm a,\bm b]} \Kc_{\bm R_i^{1:n}}^{\sss \Xc_{1:n}} \geq 0$ for all $i \in \{1,\dots,n\}$, which in turn implies that $\Delta_{(\bm a,\bm b]}C_{1:n}^\nu = \frac{1}{n} \sum_{i=1}^n \Delta_{(\bm a,\bm b]} \Kc_{\bm R_i^{1:n}}^{\sss \Xc_{1:n}} \geq 0$ almost surely, and thus that the estimator $C_{1:n}^\nu$ is $d$-increasing with  probability 1.

The desired conclusion finally follows from Theorem~1.2.11 in \cite{DurSem15}.
\end{proof}


\section{Proofs of Propositions~\ref{prop:Bin:cond}, ~\ref{prop:BetaBin:cond} and~\ref{prop:Beta:cond}}
\label{proofs:cond:marg}

The proofs of Propositions~\ref{prop:Bin:cond},~\ref{prop:BetaBin:cond} and~\ref{prop:Beta:cond} are based on six lemmas which we prove first.

\begin{lem}
  \label{lem:Bin:cont}
 For any $n \in \N$ and $w \in [0,n)$, the function $t \mapsto \bar B_{n,t}(w)$ is continuous on $[0,1]$.
\end{lem}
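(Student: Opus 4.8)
The plan is to show that, for fixed $n \in \N$ and fixed $w$, the map $t \mapsto \bar B_{n,t}(w)$ is in fact a polynomial in $t$, from which continuity on $[0,1]$ is immediate.

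First I would unfold the definition. Since $\bar B_{n,t}$ is the survival function of a Binomial$(n,t)$ random variable $S_t$, one has $\bar B_{n,t}(w) = \Pr(S_t > w)$. Because $S_t$ is supported on the integers $\{0,1,\dots,n\}$, the event $\{S_t > w\}$ coincides with $\{S_t \geq \ip{w}+1\}$, so that
\[
\bar B_{n,t}(w) = \sum_{s=\ip{w}+1}^{n} \binom{n}{s}\, t^s (1-t)^{n-s}.
\]
The restriction $w \in [0,n)$ guarantees that $\ip{w}+1 \leq n$, so the index set $\{\ip{w}+1,\dots,n\}$ is a well-defined, finite, nonempty subset of $\{0,\dots,n\}$ (for $w \geq n$ the sum would simply be empty and equal to $0$, which is still continuous, but this regime is excluded here).

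Next I would observe that each summand $t \mapsto \binom{n}{s}\, t^s (1-t)^{n-s}$ is a polynomial in $t$ and hence continuous on the whole of $[0,1]$, including the endpoints $t=0$ and $t=1$, where the Binomial law degenerates at $0$ and $n$ respectively. A finite sum of continuous functions is continuous, which yields the claim; in fact the argument shows the stronger statement that $t \mapsto \bar B_{n,t}(w)$ is infinitely differentiable on $[0,1]$.

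There is essentially no genuine obstacle here: the only mild care needed is in correctly identifying the index set of the finite sum, in particular handling via the floor $\ip{w}$ whether or not $w$ is an integer, and in checking that nothing breaks at the endpoints $t \in \{0,1\}$, both of which are routine. I expect the polynomial representation above to be reused when establishing the stronger monotonicity assertion of Proposition~\ref{prop:Bin:cond}.
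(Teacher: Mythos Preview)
Your proof is correct and essentially identical to the paper's: both simply observe that $\bar B_{n,t}(w)$ is a polynomial in $t$ (the paper writes it as $1 - \sum_{k=0}^{\ip{w}} \binom{n}{k} t^k(1-t)^{n-k}$, you write the complementary tail sum), from which continuity is immediate. Your aside about reusing the polynomial form for monotonicity is not how the paper proceeds, but that is irrelevant to the present lemma.
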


\begin{proof}[\bf Proof]
  Fix $n \in \N$ and $w \in [0,n)$. The desired result is an immediate consequence of the fact that 
  $$
  \bar B_{n,t}(w)  = 1 - \sum_{k=0}^{\ip{w}} {n \choose k} t^k (1-t)^{n-k}, \qquad t \in [0,1],
  $$
  is a polynomial in $t$.
\end{proof}
\begin{lem}
  \label{lem:Beta:cont}
 For any $n \in \N$, $\rho \in (0,n)$ and $w \in (0,1)$, the function $t \mapsto \bar \beta_{n,t,\rho}(w)$ is continuous on $[0,1]$.
\end{lem}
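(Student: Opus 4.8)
The plan is to write $\bar\beta_{n,u,\rho}$ explicitly as the survival function of the Beta distribution with shape parameters $\alpha(u) = u(n-\rho)/\rho$ and $\beta(u) = (1-u)(n-\rho)/\rho$, and to treat the open interval $u \in (0,1)$ and the endpoints $u \in \{0,1\}$ separately. At the endpoints the distribution degenerates: following the convention of Section~\ref{sec:beta:margin} together with~\eqref{eq:var:W}, the underlying variable equals $0$ when $u=0$ and $1$ when $u=1$ almost surely, so that for fixed $w \in (0,1)$ one has $\bar\beta_{n,0,\rho}(w) = 0$ and $\bar\beta_{n,1,\rho}(w) = 1$.

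First I would prove continuity on $(0,1)$. There the shape parameters $\alpha(u)$ and $\beta(u)$ are linear, hence continuous, and strictly positive, and the survival function reads
\[
\bar\beta_{n,u,\rho}(w) = \frac{\int_w^1 s^{\alpha(u)-1}(1-s)^{\beta(u)-1}\,\dd s}{B(\alpha(u),\beta(u))}.
\]
Fixing $u_0 \in (0,1)$ and letting $u \to u_0$, the denominator converges by continuity of the Beta function, which is strictly positive. For the numerator I would apply dominated convergence on a small neighborhood of $u_0$: there $\beta(u) - 1 \ge \beta_* - 1$ for some $\beta_* \in (0,\beta(u_0))$, so that $(1-s)^{\beta(u)-1} \le (1-s)^{\beta_*-1}$ (the base lying in $(0,1]$), while $s^{\alpha(u)-1}$ stays bounded on $[w,1]$ since $s \ge w > 0$ and the exponent ranges in a bounded set; this yields an integrable dominating function on $[w,1]$, and pointwise continuity of the integrand then gives continuity of the numerator, hence of the ratio. (Equivalently, one may simply invoke the standard continuity of the regularized incomplete beta function in its parameters for fixed $w \in (0,1)$.)

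The main obstacle is continuity at the endpoints, where this representation degenerates and a direct analysis of the limiting densities is awkward. Here I would instead use only the first moment of the smoothing margin, namely $\Ex(W_u) = u$ for $W_u \sim \mathrm{Beta}(\alpha(u),\beta(u))$, established in Section~\ref{sec:beta:margin}. By Markov's inequality, for $u \in (0,1)$ and fixed $w \in (0,1)$,
\[
\bar\beta_{n,u,\rho}(w) = \Pr(W_u > w) \le \frac{\Ex(W_u)}{w} = \frac{u}{w},
\]
which tends to $0 = \bar\beta_{n,0,\rho}(w)$ as $u \to 0^+$, giving right-continuity at $u=0$. Symmetrically, applying Markov's inequality to the nonnegative variable $1 - W_u$, whose mean is $1-u$, gives $\Pr(W_u \le w) = \Pr(1 - W_u \ge 1 - w) \le (1-u)/(1-w)$, which tends to $0$ as $u \to 1^-$; since $\bar\beta_{n,u,\rho}(w) = 1 - \Pr(W_u \le w)$, this yields left-continuity at $u=1$ with limit $1 = \bar\beta_{n,1,\rho}(w)$. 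Combining the interior and endpoint arguments establishes continuity on all of $[0,1]$.
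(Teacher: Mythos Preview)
Your proof is correct and follows essentially the same strategy as the paper: dominated convergence for continuity on $(0,1)$, and Markov's inequality for the endpoints (the paper reaches $t=1$ via the symmetry $\bar\beta_{n,1-t,\rho}(w)=1-\bar\beta_{n,t,\rho}(1-w)$ before applying Markov, which amounts to your direct argument on $1-W_u$). Your domination on $[w,1]$ with a localized bound on the exponents is in fact tidier than the paper's version, which integrates over $[0,w]$ and asserts a uniform bound $f_{t,\tau}(x)\le \Gamma(\tau)/K^2$ on the Beta density that is delicate when either shape parameter is below~$1$.
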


\begin{proof}[\bf Proof]
  Fix $n \in \N$, $\rho \in (0,n)$ and $w \in (0,1)$ and let $\tau = (n-\rho)/\rho > 0$. From the properties of the beta distribution Beta$(t\tau,(1-t)\tau)$, for any $t \in (0,1)$,
  $$
  \bar \beta_{n,t,\rho}(w) =  1 - \int_0^w f_{t,\tau}(x) \dd x,
  $$
where
$$
f_{t,\tau}(x) = \frac{\Gamma(\tau)}{\Gamma(t\tau)\Gamma\left\{(1-t)\tau\right\}} x^{t\tau - 1} (1-x)^{(1-t)\tau - 1}, \qquad x \in (0,1),
$$
and $\Gamma$ is the gamma function. Fix $t_0 \in (0,1)$ and let us verify that $t \mapsto \bar \beta_{n,t,\rho}(w)$ is continuous at $t_0$. To do so, let $t_m$ be an arbitrary sequence in $(0,1)$ such that $t_m \to t_0$ as $m \to \infty$. By continuity of $\Gamma$ on $(0,\infty)$, we have that, for any $x \in (0,1)$, $f_{t_m,\tau}(x) \to f_{t_0,\tau}(x)$ as $m \to \infty$. Furthermore, since there exists a constant $K > 0$ such that $\Gamma(z) \geq K$ for all $z \in (0,\infty)$, we have that, for any $m \in \N$ and $x \in (0,1)$, $f_{t_m,\tau}(x) \leq \Gamma(\tau)/K^2$. The dominated convergence theorem then implies that $\bar \beta_{n,t_m,\rho}(w) \to \bar \beta_{n,t_0,\rho}(w)$ as $m \to \infty$, which implies that the function $t \mapsto \bar \beta_{n,t,\rho}(w)$ is continuous on $(0,1)$. Furthermore, from Markov's inequality, we have that, for any $t \in (0,1)$, $\bar \beta_{n,t,\rho}(w) \leq t/w$, which combined with the fact that $\bar \beta_{n,0,\rho}(w) = 0$, implies that $\lim_{t \to 0^+} \bar \beta_{n,t,\rho}(w) = \bar \beta_{n,0,\rho}(w)$, and therefore that the function $t \mapsto \bar \beta_{n,t,\rho}(w)$ is continuous at $0$. Finally, using symmetry properties of the density of the Beta$(t\tau,(1-t)\tau)$, some thought reveals that, for any $t \in (0,1)$, $\bar \beta_{n,1-t,\rho}(w) =  \beta_{n,t,\rho}(1-w) = 1 - \bar \beta_{n,t,\rho}(1-w)$, which implies that $\lim_{t \to 1^-} \bar \beta_{n,t,\rho}(w) = \lim_{t \to 0^+} \bar \beta_{n,1-t,\rho}(w) = 1 - \lim_{t \to 0^+} \bar \beta_{n,t,\rho}(1-w) = 1 = \bar \beta_{n,1,\rho}(w)$ since, from Markov's inequality,  for any $t \in (0,1)$, $\bar \beta_{n,t,\rho}(1-w) \leq t/(1-w)$.
\end{proof}

\begin{lem}
  \label{lem:BetaBin:cont}
 For any $n \in \N$, $\rho \in (0,n)$ and $w \in [0,n)$, the function $t \mapsto \bar \Bc_{n,t,\rho}(w)$ is continuous on $[0,1]$.
\end{lem}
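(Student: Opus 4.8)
The plan is to show that, for each fixed $w$, the map $t\mapsto \bar\Bc_{n,t,\rho}(w)$ is in fact a \emph{polynomial} in $t$, so that its continuity on the whole of $[0,1]$ is immediate; this parallels the argument used for the scaled binomial survival margin in Lemma~\ref{lem:Bin:cont}. First I would fix $n\in\N$, set $\tau=(n-\rho)/(\rho-1)$ (which is positive, since the scaled beta-binomial survival margin is only defined for $\rho\in(1,n)$), and recall that $\bar\Bc_{n,t,\rho}$ is the survival function of the Beta-Binomial$(n,t\tau,(1-t)\tau)$ distribution. Because this distribution is discrete and supported on $\{0,1,\dots,n\}$, its survival function at $w\in[0,n)$ can be written as the finite sum
\[
\bar\Bc_{n,t,\rho}(w)=\sum_{s=\ip{w}+1}^{n}\binom{n}{s}\frac{B\bigl(s+t\tau,\,n-s+(1-t)\tau\bigr)}{B\bigl(t\tau,\,(1-t)\tau\bigr)},
\]
obtained from the probability mass function recalled in Section~\ref{sec:surv:marg:dis}.

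The key step is to rewrite each ratio of Beta functions, via $B(a,b)=\Gamma(a)\Gamma(b)/\Gamma(a+b)$ and $\Gamma(z+1)=z\Gamma(z)$, as a ratio of rising factorials:
\[
\frac{B\bigl(s+t\tau,\,n-s+(1-t)\tau\bigr)}{B\bigl(t\tau,\,(1-t)\tau\bigr)}=\frac{\prod_{i=0}^{s-1}(t\tau+i)\,\prod_{i=0}^{n-s-1}\bigl((1-t)\tau+i\bigr)}{\prod_{i=0}^{n-1}(\tau+i)},
\]
where the denominator uses $t\tau+(1-t)\tau=\tau$ and is therefore a strictly positive constant independent of $t$.

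With this identity in hand, each summand is the product of the constant $\binom{n}{s}\big/\prod_{i=0}^{n-1}(\tau+i)$ with finitely many factors that are affine in $t$; hence every $\Pr(S=s)$ is a polynomial in $t$, and so is the finite sum $\bar\Bc_{n,t,\rho}(w)$. Since polynomials are continuous on $[0,1]$, this yields the claim. I would also note that this algebraic expression extends the margin consistently to the endpoints: at $t=0$ the factor $t\tau$ forces $\Pr(S=s)=0$ for $s\geq1$ and $\Pr(S=0)=1$, and symmetrically at $t=1$, recovering the degenerate margins prescribed by~\eqref{eq:var:W}.

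The only delicate point is the boundary behaviour. The mixture representation $\bar\Bc_{n,t,\rho}(w)=\int_0^1\bar B_{n,p}(w)\,f_{t,\tau}(p)\,\dd p$, with $f_{t,\tau}$ the Beta$(t\tau,(1-t)\tau)$ density appearing in the proof of Lemma~\ref{lem:Beta:cont}, is available only for $t\in(0,1)$ and degenerates as $t\to0,1$; this is precisely why I prefer the purely algebraic rising-factorial rewriting, which is manifestly polynomial and needs no dominated-convergence argument near the endpoints. An acceptable alternative would be to establish continuity on $(0,1)$ from that mixture and the already-proved continuity of $t\mapsto\bar B_{n,p}(w)$, and then to handle $t\in\{0,1\}$ separately through the Markov-type bounds $\bar\Bc_{n,t,\rho}(w)\leq nt$ and $1-\bar\Bc_{n,t,\rho}(w)\leq n(1-t)/(n-w)$, which follow from $\Ex(S)=nt$ and the reflection $S\leftrightarrow n-S$; the polynomial route is, however, shorter and self-contained.
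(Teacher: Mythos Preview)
Your proof is correct and takes a genuinely different route from the paper. The paper establishes continuity on $(0,1)$ by observing that the probability mass function is a continuous combination of Gamma functions (since $\Gamma$ is continuous on $(0,\infty)$), and then handles the endpoints $t\in\{0,1\}$ separately: Markov's inequality gives $\bar\Bc_{n,t,\rho}(w)\leq nt/(\ip{w}+1)$, forcing the limit at $t=0$, and a reflection identity $\bar\Bc_{n,1-t,\rho}(w)=1-\bar\Bc_{n,t,\rho}(n-w)$ transfers this to $t=1$. Your rising-factorial rewriting is more economical: once each $\Pr(S=s)$ is seen to be a polynomial in $t$, continuity on all of $[0,1]$ is immediate and no separate boundary analysis is needed. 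The paper's approach has the mild advantage of being more robust (it would survive if the Gamma ratios did not simplify so nicely), but here the polynomial argument is both shorter and more transparent, and it makes the degenerate endpoint behaviour fall out algebraically rather than via tail bounds. Incidentally, the ``alternative'' you sketch at the end---continuity on the interior plus Markov-type bounds at the endpoints---is essentially the paper's own proof.
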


\begin{proof}[\bf Proof]
  Fix $n \in \N$, $\rho \in (1,n)$ and $w \in [0,n)$ and let $\tau = (n-\rho)/(\rho - 1) > 0$. From the properties of the beta-binomial distribution Beta-Binomial($n,t \tau, (1-t)\tau)$, for any $t \in (0,1)$, we have that
  $$
  \bar \Bc_{n,t,\rho}(w) = 1 - \sum_{k=0}^{\ip{w}} {n \choose k} \frac{\Gamma(k+t\tau) \Gamma\{n-k+(1-t)\tau\}\Gamma(\tau)}{\Gamma(n+\tau)\Gamma(t\tau)\Gamma\{(1-t)\tau\}}.
  $$
  Since $\Gamma$ is continuous on $(0,\infty)$, we immediately obtain that $t \mapsto \bar \Bc_{n,t,\rho}(w)$ is continuous on $(0,1)$. For some $t \in (0,1)$, let $X$ be Beta-Binomial($n,t \tau, (1-t)\tau)$. Using Markov's inequality, we have that
$$
\bar \Bc_{n,t,\rho}(w) = \Pr(X > w) = \Pr(X \geq \ip{w}+1) \leq \frac{nt}{\ip{w}+1}.
$$
The latter display implies that $\lim_{t \to 0^+} \bar \Bc_{n,t,\rho}(w) = 0 = \bar \Bc_{n,0,\rho}(w)$ since $X$ is degenerate and equal to 0 if $t = 0$. It remains to verify that $t \mapsto \bar \Bc_{n,t,\rho}(w)$ is continuous at $1$. Using symmetry properties of the probability mass function of the Beta-Binomial($n,t \tau, (1-t)\tau)$, some thought reveals that, for any $t \in (0,1)$, $\bar \Bc_{n,1-t,\rho}(w) =  \Bc_{n,t,\rho}(n-w) = 1 - \bar \Bc_{n,t,\rho}(n-w)$, which implies that $\lim_{t \to 1^-} \bar \Bc_{n,t,\rho}(w) = \lim_{t \to 0^+} \bar \Bc_{n,1-t,\rho}(w) = 1 - \lim_{t \to 0^+} \bar \Bc_{n,t,\rho}(n-w) = 1 = \bar \Bc_{n,1,\rho}(w)$ since, from Markov's inequality,  for any $t \in (0,1)$, $\bar \Bc_{n,t,\rho}(n-w) \leq nt/(\ip{n-w}+1)$.
\end{proof}

\begin{lem}
  \label{lem:Bin:increasing}
 For any $n \in \N$ and $w \in [0,n)$, the function $t \mapsto \bar B_{n,t}(w)$ is increasing on $[0,1]$.
\end{lem}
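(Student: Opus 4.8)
The plan is to exploit the probabilistic meaning of $\bar B_{n,t}(w)$ as the tail probability $\Pr(S_{n,t} > w)$ of a Binomial$(n,t)$ variable, together with the fact that such tails are stochastically increasing in the success probability $t$. Fix $n \in \N$ and $w \in [0,n)$, and set $m = \ip{w}+1$; since $w \in [0,n)$ forces $\ip{w} \in \{0,\dots,n-1\}$, we have $m \in \{1,\dots,n\}$. Starting from the polynomial expression already recorded in the proof of Lemma~\ref{lem:Bin:cont}, I would rewrite
$$
\bar B_{n,t}(w) = \sum_{k=m}^n \binom{n}{k} t^k (1-t)^{n-k}, \qquad t \in [0,1],
$$
which is a polynomial in $t$ and hence differentiable on $[0,1]$.

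The key step is to differentiate this sum term by term and to observe a telescoping cancellation. Using the elementary identities $k\binom{n}{k} = n\binom{n-1}{k-1}$ and $(n-k)\binom{n}{k} = n\binom{n-1}{k}$, each summand contributes
$$
\frac{d}{dt}\binom{n}{k} t^k (1-t)^{n-k} = n\binom{n-1}{k-1}t^{k-1}(1-t)^{n-k} - n\binom{n-1}{k}t^{k}(1-t)^{n-k-1}.
$$
The negative part of the $k$th term coincides with the positive part of the $(k+1)$th term, so summing from $k=m$ to $n$ telescopes; the boundary term at $k=n+1$ vanishes because its $\binom{n-1}{n}$ factor is zero, leaving only
$$
\frac{d}{dt}\bar B_{n,t}(w) = n\binom{n-1}{m-1}t^{m-1}(1-t)^{n-m}.
$$

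Since $m \in \{1,\dots,n\}$, both exponents $m-1$ and $n-m$ are nonnegative, so this derivative is nonnegative on $[0,1]$ and strictly positive on $(0,1)$. Combined with the continuity of $t \mapsto \bar B_{n,t}(w)$ on $[0,1]$ from Lemma~\ref{lem:Bin:cont}, this shows the map is (strictly) increasing on $[0,1]$, as claimed. I do not anticipate any genuine obstacle: the only points requiring care are keeping track of the index range of $m$ and checking that the telescoped boundary terms vanish. As an alternative that sidesteps the computation, one could couple $S_{n,t_1}$ and $S_{n,t_2}$ for $t_1 < t_2$ by representing each as $\sum_{i=1}^n \1(V_i \le t)$ from common uniforms $V_1,\dots,V_n$, giving $S_{n,t_1}\le S_{n,t_2}$ pointwise and hence $\bar B_{n,t_1}(w)\le \bar B_{n,t_2}(w)$; the telescoping route is preferable here, however, since it yields strict monotonicity directly.
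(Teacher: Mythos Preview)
Your proof is correct but follows a genuinely different route from the paper's. The paper argues via stochastic ordering: for $0<t_1\le t_2<1$ it lets $X\sim\text{Binomial}(n,t_1)$ and $Y\sim\text{Binomial}(n,t_2)$, shows that the likelihood ratio $g(x)/f(x)$ is increasing in $x$ (i.e.\ $X\leqlr Y$), and then invokes the implication $\leqlr\Rightarrow\leqst$ to conclude $\bar B_{n,t_1}(w)\le\bar B_{n,t_2}(w)$. Your approach instead differentiates the polynomial expression directly and obtains, by a telescoping sum, the closed form $\frac{d}{dt}\bar B_{n,t}(w)=n\binom{n-1}{m-1}t^{m-1}(1-t)^{n-m}$, which is manifestly nonnegative and strictly positive on $(0,1)$. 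Your argument is more elementary and self-contained, and it delivers strict monotonicity in one stroke without appealing to any external stochastic-ordering result. The paper's route, on the other hand, buys reusability: the same $\leqlr\Rightarrow\leqst$ template is then applied verbatim to the beta margins (Lemma~\ref{lem:Beta:increasing}) and, combined with a compounding argument, to the beta-binomial margins (Lemma~\ref{lem:BetaBin:increasing}), where a direct differentiation would be considerably messier.
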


\begin{proof}[\bf Proof]
  Fix $n \in \N$. We need to prove that, for any $w \in [0,n)$, $\bar B_{n,t_1}(w) \leq \bar B_{n,t_2}(w)$ whenever $0 \leq t_1 \leq t_2 \leq 1$. Notice that, for any $w \in [0,n)$, $\bar B_{n,0}(w) = 0$ and $\bar B_{n,1}(w) = 1$ so that it suffices to prove that, for any $w \in [0,n)$, $\bar B_{n,t_1}(w) \leq \bar B_{n,t_2}(w)$ whenever $0 < t_1 \leq t_2 < 1$. Fix $0 < t_1 \leq t_2 < 1$ and let $X$ be Binomial$(n,t_1)$ and $Y$ be Binomial$(n,t_2)$. Some thought reveals that the latter is proven if we show that $X \leqst Y$, where $\leqst$ denotes the usual stochastic order. Indeed, by definition, $X \leqst Y$ if $\Pr(X > x) \leq \Pr(Y > x)$ for all $x \in \R$. According for instance to Theorem~1.C.1 in \cite{ShaSha07}, $X \leqst Y$ will hold if $X \leqlr Y$, where $\leqlr$ denotes the likelihood ratio order.

Let $f$ be the probability mass function of $X$ and let $g$ be the probability mass function of $Y$. Then, by definition,
\begin{align*}
  f(x) = {n \choose x} t_1^x (1-t_1)^{n-x}, \qquad  g(x) = {n \choose x} t_2^x (1-t_2)^{n-x}, \qquad x \in \{0,\dots,n\}.
\end{align*}
According for instance to Section~1.C.1 in \cite{ShaSha07}, to prove that $X \leqlr Y$, by definition, we need to show that the function $x \mapsto g(x)/f(x)$ is increasing on $\{0,\dots,n\}$. To prove the latter, it suffices to show that the function
$$
h(x) = \frac{t_2^x (1-t_2)^{n-x}}{t_1^x (1-t_1)^{n-x}}, \qquad x \in [0,n],
$$
is increasing on $[0,n]$. To do so, we shall prove that the derivative of $h$ is positive on $[0,n]$. Let $l(x)=\log \{h(x)\}$, $x \in [0,n]$. Then, for any $x \in [0,n]$,
\begin{align*}
\frac{\dd}{\dd x} \{ h(x) \} &= \frac{\dd}{\dd x} \{ e^{l(x)} \} = e^{l(x)} \frac{\dd}{\dd x} \{ l(x) \} = e^{l(x)}  \frac{\dd}{\dd x} [ \log \{ h(x) \} ] = e^{l(x)}  \frac{\dd}{\dd x} \left[ \log \left\{ \frac{t_{2}^x (1-t_{2})^{n-x}}{t_{1}^x (1-t_{1})^{n-x}} \right\} \right]\\
                  &= e^{l(x)} \frac{\dd}{\dd x} \left\{ x\log(t_2) + (n-x)\log(1-t_2) - x\log(t_1) - (n-x)\log(1-t_1)\right\} = e^{l(x)} \log \left\{\frac{t_2(1-t_1)}{t_1(1-t_2)} \right\} \geq 0,
\end{align*}
since $0 < t_1 \leq t_2 < 1$ implies that $t_2/t_1 \geq 1$ and that $(1-t_1)/(1 - t_2) \geq 1$.
\end{proof}

\begin{lem}
  \label{lem:Beta:increasing}
 For any $n \in \N$, $\rho \in (0,n)$ and $w \in (0,1)$, the function $t \mapsto \bar \beta_{n,t,\rho}(w)$ is increasing on $[0,1]$.
\end{lem}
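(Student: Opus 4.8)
The plan is to mimic the proof of Lemma~\ref{lem:Bin:increasing} and establish the monotonicity through a stochastic ordering argument. Fix $n \in \N$ and $\rho \in (0,n)$, and set $\tau = (n-\rho)/\rho > 0$ as in the proof of Lemma~\ref{lem:Beta:cont}. It suffices to show that, for any $w \in (0,1)$, one has $\bar\beta_{n,t_1,\rho}(w) \le \bar\beta_{n,t_2,\rho}(w)$ whenever $0 \le t_1 \le t_2 \le 1$. The boundary cases are immediate: when $t = 0$ the smoothing random variable is degenerate at $0$, so $\bar\beta_{n,0,\rho}(w) = 0$, and when $t = 1$ it is degenerate at $1$, so $\bar\beta_{n,1,\rho}(w) = 1$; as these are respectively the smallest and largest possible values of a survival function at a point $w \in (0,1)$, they bracket every intermediate value. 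Hence it remains to treat $0 < t_1 \le t_2 < 1$.

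For such $t_1, t_2$, let $X$ be Beta$(t_1\tau,(1-t_1)\tau)$ and $Y$ be Beta$(t_2\tau,(1-t_2)\tau)$, so that $\bar\beta_{n,t_1,\rho}(w) = \Pr(X > w)$ and $\bar\beta_{n,t_2,\rho}(w) = \Pr(Y > w)$, and the claim reduces to $X \leqst Y$. Exactly as in the proof of Lemma~\ref{lem:Bin:increasing}, I would invoke Theorem~1.C.1 in \cite{ShaSha07}, according to which $X \leqst Y$ follows from $X \leqlr Y$, so it is enough to verify the likelihood ratio order. Writing $f$ and $g$ for the densities of $X$ and $Y$ on $(0,1)$, namely $f(x) \propto x^{t_1\tau - 1}(1-x)^{(1-t_1)\tau - 1}$ and $g(x) \propto x^{t_2\tau - 1}(1-x)^{(1-t_2)\tau - 1}$, the normalizing constants being positive and independent of $x$, the ratio satisfies
\begin{equation*}
\frac{g(x)}{f(x)} \propto x^{(t_2 - t_1)\tau}\,(1-x)^{-(t_2 - t_1)\tau} = \left( \frac{x}{1-x} \right)^{(t_2 - t_1)\tau}, \qquad x \in (0,1).
\end{equation*}
Since $t_2 - t_1 \ge 0$ and $\tau > 0$, the exponent $(t_2 - t_1)\tau$ is nonnegative, and $x \mapsto x/(1-x)$ is increasing on $(0,1)$; hence $x \mapsto g(x)/f(x)$ is increasing, which is the definition of $X \leqlr Y$.

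Combining these facts yields $X \leqst Y$, i.e.\ $\bar\beta_{n,t_1,\rho}(w) \le \bar\beta_{n,t_2,\rho}(w)$, and together with the boundary cases this gives the desired monotonicity on the whole of $[0,1]$. I expect no serious obstacle here: the argument is a direct beta-distribution analogue of the binomial computation already carried out in Lemma~\ref{lem:Bin:increasing}, with the likelihood-ratio step made even cleaner by the explicit power-function form of the density ratio. The only point requiring a little care is the degenerate behaviour at $t \in \{0,1\}$, which is disposed of separately as above.
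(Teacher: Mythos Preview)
Your proposal is correct and follows essentially the same route as the paper: reduce to $0<t_1\le t_2<1$ via the degenerate boundary cases, then establish the likelihood ratio order between the two beta laws, which implies the usual stochastic order. The only cosmetic difference is that you read off the monotonicity of $g/f$ directly from the power form $(x/(1-x))^{(t_2-t_1)\tau}$, whereas the paper differentiates $\log\{g(x)/f(x)\}$ to get $(t_2-t_1)\tau\,(1/x+1/(1-x))\ge 0$.
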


\begin{proof}[\bf Proof]
  Fix $n \in \N$ and $\rho \in (0,n)$. We need to prove that, for any $w \in (0,1)$, $\bar \beta_{n,t_1,\rho}(w) \leq \bar \beta_{n,t_2,\rho}(w)$ whenever $0 \leq t_1 \leq t_2 \leq 1$. Since, for any $w \in (0,1)$, $\bar \beta_{n,0,\rho}(w) = 0$ and $\bar \beta_{n,1,\rho}(w) = 1$, it suffices to prove that, for any $w \in (0,1)$, $\bar \beta_{n,t_1,\rho}(w) \leq \bar \beta_{n,t_2,\rho}(w)$ whenever $0 < t_1 \leq t_2 < 1$. Fix $0 < t_1 \leq t_2 < 1$ and let $X$ be Beta$(t_1\tau,(1-t_1)\tau)$ and $Y$ be Beta$(t_2\tau,(1-t_2)\tau)$, where $\tau = (n-\rho)/\rho > 0$. As explained in the proof of Lemma~\ref{lem:Bin:increasing}, it then suffices to prove that $X \leqlr Y$.

Let $f$ be the density of $X$ and $g$ be the density of $Y$. By definition, we have that
\begin{align*}
  f(x) = \frac{\Gamma(\tau)}{\Gamma(t_1\tau)\Gamma \{ (1-t_1)\tau \}} x^{t_1\tau - 1} (1-x)^{(1-t_1)\tau - 1}, \qquad g(x) = \frac{\Gamma(\tau)}{\Gamma(t_2\tau)\Gamma \{ (1-t_2)\tau \} } x^{t_2\tau - 1} (1-x)^{(1-t_2)\tau - 1}, \qquad x \in (0,1).
\end{align*}
According for instance to Section~1.C.1 in \cite{ShaSha07}, to prove that $X \leqlr Y$,  we need to show that the function $x \mapsto g(x)/f(x)$ is increasing on $(0,1)$. Let $l(x)=\log \{g(x)/f(x)\}$, $x \in (0,1)$. Then, we have that, for any $x \in (0,1)$,
\begin{align*}
\frac{\dd}{\dd x} \left\{ g(x)/f(x) \right\} &= \frac{\dd}{\dd x} \left\{e^{l(x)} \right\} = e^{l(x)} \frac{\dd}{\dd x} \left\{ l(x) \right\} = e^{l(x)} \frac{\dd}{\dd x} \left[ \log \left\{ \frac{\frac{\Gamma(\tau)}{\Gamma(t_2\tau)\Gamma \{(1-t_2)\tau \}} x^{t_2\tau - 1} (1-x)^{(1-t_2)\tau - 1}}{\frac{\Gamma(\tau)}{\Gamma(t_1\tau)\Gamma\{(1-t_1)\tau\}} x^{t_1\tau - 1} (1-x)^{(1-t_1)\tau - 1}} \right\} \right]\\
                  &= e^{l(x)}  \frac{\dd}{\dd x} \left[ \log \left\{\frac{\Gamma(t_1\tau)\Gamma((1-t_1)\tau)}{\Gamma(t_2\tau)\Gamma((1-t_2)\tau)}\right\}+ \log \left\{  \frac{ x^{t_2\tau - 1} (1-x)^{(1-t_2)\tau - 1}}{ x^{t_1\tau - 1} (1-x)^{(1-t_1)\tau - 1}} \right\} \right]\\
                  &= e^{l(x)} \frac{\dd}{\dd x} \left\{ (t_2-t_1)\tau \log(x) - (t_2-t_1)\tau \log(1-x)\right\} = e^{l(x)} (t_2-t_1)\tau \left(\frac{1}{x} + \frac{1}{1-x} \right) \geq 0.
\end{align*}
\end{proof}

\begin{lem}
  \label{lem:BetaBin:increasing}
 For any $n \in \N$, $\rho \in (1,n)$ and $w \in [0,n)$, the function $t \mapsto \bar \Bc_{n,t,\rho}(w)$ is increasing on $[0,1]$.
\end{lem}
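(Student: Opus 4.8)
The plan is to follow the template of the proofs of Lemmas~\ref{lem:Bin:increasing} and~\ref{lem:Beta:increasing} and reduce the asserted monotonicity to a likelihood ratio ordering between two beta-binomial laws. Write $\tau = (n-\rho)/(\rho-1) > 0$. I would first dispose of the endpoints: for $w \in [0,n)$ the law Beta-Binomial$(n,0,\tau)$ is degenerate at $0$ while Beta-Binomial$(n,\tau,0)$ is degenerate at $n$, so that $\bar\Bc_{n,0,\rho}(w) = 0$ and $\bar\Bc_{n,1,\rho}(w) = 1$. It then suffices to fix $0 < t_1 \le t_2 < 1$ and to prove $\bar\Bc_{n,t_1,\rho}(w) \le \bar\Bc_{n,t_2,\rho}(w)$ for all $w \in [0,n)$. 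Letting $X$ be Beta-Binomial$(n,t_1\tau,(1-t_1)\tau)$ and $Y$ be Beta-Binomial$(n,t_2\tau,(1-t_2)\tau)$, this is exactly $X \leqst Y$, and, as in the two earlier proofs, by Theorem~1.C.1 in \cite{ShaSha07} it is enough to verify the stronger order $X \leqlr Y$.

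Since both laws are supported on $\{0,\dots,n\}$ with strictly positive masses, $X \leqlr Y$ amounts to showing that $k \mapsto p_{t_2}(k)/p_{t_1}(k)$ is nondecreasing, where, from the mass function recalled in the proof of Lemma~\ref{lem:BetaBin:cont},
$$
p_t(k) = \binom{n}{k} \frac{\Gamma(k+t\tau)\,\Gamma\{n-k+(1-t)\tau\}\,\Gamma(\tau)}{\Gamma(n+\tau)\,\Gamma(t\tau)\,\Gamma\{(1-t)\tau\}}.
$$
All factors not depending on $k$ cancel in this ratio, so that, up to a positive $k$-free constant, the monotonicity reduces to that of
$$
Q(k) = \frac{\Gamma(k+t_2\tau)\,\Gamma\{n-k+(1-t_2)\tau\}}{\Gamma(k+t_1\tau)\,\Gamma\{n-k+(1-t_1)\tau\}}, \qquad k \in \{0,\dots,n\}.
$$
It thus remains to show $Q(k+1) \ge Q(k)$ for every $k \in \{0,\dots,n-1\}$.

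This last step is where the only genuine computation lies. Applying $\Gamma(z+1) = z\,\Gamma(z)$ to each of the four gamma factors, the ratio of consecutive terms telescopes to
$$
\frac{Q(k+1)}{Q(k)} = \frac{(k+t_2\tau)\{n-k-1+(1-t_1)\tau\}}{(k+t_1\tau)\{n-k-1+(1-t_2)\tau\}},
$$
whose denominator is strictly positive for $k \le n-1$. Expanding numerator minus denominator, the quadratic cross terms cancel and one is left with the factorisation
$$
(k+t_2\tau)\{n-k-1+(1-t_1)\tau\} - (k+t_1\tau)\{n-k-1+(1-t_2)\tau\} = \tau(t_2-t_1)(n-1+\tau),
$$
which is nonnegative because $\tau > 0$, $t_2 \ge t_1$ and $n \ge 1$. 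Hence $Q(k+1)/Q(k) \ge 1$, so $Q$ --- and therefore $p_{t_2}/p_{t_1}$ --- is nondecreasing, giving $X \leqlr Y$, whence $X \leqst Y$ and the claimed monotonicity of $t \mapsto \bar\Bc_{n,t,\rho}(w)$. The main obstacle is purely the bookkeeping in this telescoping identity and the sign analysis of the resulting product; everything else mirrors the two earlier lemmas almost verbatim.
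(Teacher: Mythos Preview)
Your proof is correct, but it takes a different route from the paper's. The paper does not establish $X \leqlr Y$ directly; instead it exploits the mixture representation of the beta-binomial law --- writing $X$ as $Z_{\Theta_1}$ and $Y$ as $Z_{\Theta_2}$ where $Z_p$ is Binomial$(n,p)$ and $\Theta_i$ is Beta$(t_i\tau,(1-t_i)\tau)$ --- and then combines the stochastic orderings $Z_{p_1} \leqst Z_{p_2}$ (from Lemma~\ref{lem:Bin:increasing}) and $\Theta_1 \leqst \Theta_2$ (from Lemma~\ref{lem:Beta:increasing}) via Theorem~1.A.6 in \cite{ShaSha07} to conclude $X \leqst Y$. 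That argument is shorter and more conceptual, reusing the two earlier lemmas as building blocks rather than reproving anything. Your approach, by contrast, is entirely self-contained: the telescoping via $\Gamma(z+1)=z\Gamma(z)$ and the identity $\tau(t_2-t_1)(n-1+\tau)$ give the likelihood ratio order by a direct elementary computation, without appealing to the mixture structure or to an external closure theorem for stochastic orders. Both are valid; the paper's is more elegant given what has already been proved, while yours would work even in isolation.
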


\begin{proof}[\bf Proof]
  Fix $n \in \N$ and $\rho \in (1,n)$ and let $\tau = (n-\rho)/(\rho - 1) > 0$.  We need to prove that, for any $w \in [0,n)$, $\bar \Bc_{n,t_1,\rho}(w) \leq \bar \Bc_{n,t_2,\rho}(w)$ whenever $0 \leq t_1 \leq t_2 \leq 1$. For any $w \in [0,n)$, $\bar \Bc_{n,0,\rho}(w) = 0$ and $\bar \Bc_{n,1,\rho}(w) = 1$ so that it suffices to prove that, for any $w \in [0,n)$, $\bar \Bc_{n,t_1,\rho}(w) \leq \bar \Bc_{n,t_2,\rho}(w)$ whenever $0 < t_1 \leq t_2 < 1$. Fix $0 < t_1 \leq t_2 < 1$ and let $X$ be Beta-Binomial($n,t_1 \tau, (1-t_1)\tau)$ and $Y$ be Beta-Binomial($n,t_2 \tau, (1-t_2)\tau)$. The latter is then proven if we show that $X \leqst Y$.

  It is well-known that the distributions of $X$ and $Y$ can be viewed as compound distributions: let $\Theta_1$ (resp.\ $\Theta_2$) be Beta$(t_1\tau,(1-t_1)\tau)$ (resp.\ Beta$(t_2\tau,(1-t_2)\tau)$) and let $Z_p$ be Binomial$(n,p)$; then, with some abuse of notation, $X$ (resp.\ $Y$) has the same distribution as $Z_{\Theta_1}$ (resp.\ $Z_{\Theta_2}$).  From the proof of Lemma~\ref{lem:Bin:increasing}, we have that $Z_{p_1} \leqst Z_{p_2}$ whenever $0 < p_1 \leq p_2 < 1$, while from the proof of Lemma~\ref{lem:Beta:increasing}, we have that $\Theta_1 \leqst \Theta_2$. Theorem 1.A.6 in \cite{ShaSha07} then implies that $X \leqst Y$, which completes the proof.
\end{proof}

\begin{proof}[\bf Proofs of Propositions~\ref{prop:Bin:cond},~\ref{prop:BetaBin:cond} and~\ref{prop:Beta:cond}]
Proposition~\ref{prop:Bin:cond} (resp.\ Proposition~\ref{prop:BetaBin:cond}, Proposition~\ref{prop:Beta:cond}) is an immediate consequence of Lemmas~\ref{lem:Bin:cont} and~\ref{lem:Bin:increasing} (resp.\ Lemmas~\ref{lem:BetaBin:cont} and~\ref{lem:BetaBin:increasing}, Lemmas~\ref{lem:Beta:cont} and~\ref{lem:Beta:increasing}).
\end{proof}


\section{Proofs of Theorem~\ref{thm:Cb:n:nu}}
\label{proof:thm:Cb:n:nu}

The proof of Theorem~\ref{thm:Cb:n:nu} is based on two lemmas which can be seen as extensions of similar results stated in Section~3 of \cite{SegSibTsu17}. The first lemma involves the following condition which is implied by Condition~\ref{cond:var:W}.

\begin{cond}
  \label{cond:var:W:weak}
  There exists a positive sequence $h_n \downarrow 0$ such that, for any $n \in \N$, $\bm x \in (\R^d)^n$, $\bm u \in [0,1]^d$ and $j \in \{1,\dots,d\}$, $\Var( W_{j,u_j}^{\bm x}) \leq h_n$.
\end{cond}

\begin{lem}
  \label{lem:stochastic}
  Let $\Xb_n$ be a process in $\ell^\infty(\Delta \times [0,1]^d)$ such that, for all $\bm u \in [0,1]^d$ and $s \in [0,1]$, $\Xb_n(s,s,\bm u) = 0$. Furthermore, assume that $\Xb_n \leadsto \Xb$ in $ \ell^\infty(\Delta \times [0,1]^d)$ where $\Xb$ has continuous trajectories almost surely. Then, under Condition~\ref{cond:var:W:weak},
  \begin{align*}
    \sup_{(s,t,\bm u) \in \Delta \times[0, 1]^d} \left| \int_{[0,1]^d}\Xb_n(s,t,\bm w)\dd \nu_{\bm u}^{\sss \Xc_{\ip{ns}+1:\ip{nt}}}(\bm w)-\Xb_n(s,t,\bm u) \right| &= o_\Pr(1).
  \end{align*}
\end{lem}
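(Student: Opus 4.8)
The plan is to extract from the single hypothesis $\Xb_n \leadsto \Xb$ the two ingredients that drive everything, and then to exploit that each $\nu_{\bm u}^{\bm x}$ is a probability measure concentrating sharply around its mean $\bm u$. Since $\Xb$ has almost surely continuous trajectories on the compact set $\Delta \times [0,1]^d$, weak convergence in $\ell^\infty$ first gives asymptotic tightness, whence $\|\Xb_n\|_\infty = O_\Pr(1)$ (by the continuous mapping theorem applied to $f \mapsto \|f\|_\infty$). Secondly, by the standard characterization of weak convergence (e.g.\ Theorem~1.5.7 in \cite{vanWel96}), $\Xb_n$ is asymptotically uniformly equicontinuous in probability with respect to the sup-metric: for all $\eps,\eta>0$ there is $\delta>0$ with $\limsup_n \Pr^*(\omega_\delta(\Xb_n) > \eps) < \eta$, where $\omega_\delta(\Xb_n)$ denotes the modulus of continuity of $\Xb_n$ over $\Delta \times [0,1]^d$. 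Because $\nu_{\bm u}^{\bm x}$ has total mass one, I would rewrite the quantity inside the supremum as $\int_{[0,1]^d}\{\Xb_n(s,t,\bm w)-\Xb_n(s,t,\bm u)\}\dd\nu_{\bm u}^{\sss \Xc_{\ip{ns}+1:\ip{nt}}}(\bm w)$, reducing the whole statement to controlling the oscillation of $\Xb_n(s,t,\cdot)$ over the support of the smoothing measure.

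Next I would record the concentration estimate. Since $\Ex(W_{j,u_j}^{\bm x})=u_j$ and $\Var(W_{j,u_j}^{\bm x})\le h_m$ for every $\bm x\in(\R^d)^m$ by Condition~\ref{cond:var:W:weak}, Chebyshev's inequality and a union bound give $\nu_{\bm u}^{\bm x}(\{\bm w:\|\bm w-\bm u\|_\infty\ge\delta\})\le d\,h_m/\delta^2$, uniformly in $\bm u$ and $\bm x$. The hard part is that when this is applied with the substretch $\Xc_{\ip{ns}+1:\ip{nt}}$, the relevant sample size is $m=\ip{nt}-\ip{ns}$, which need not be large, so $d\,h_m/\delta^2$ is \emph{not} uniformly small over all $(s,t)\in\Delta$. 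This is the main obstacle, and the key to resolving it is the assumption $\Xb_n(s,s,\bm u)=0$ together with a split of $\Delta$ by the length of the substretch.

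In the regime $t-s<\delta$ (short substretches, including the degenerate case $\ip{ns}=\ip{nt}$, where by convention $\nu_{\bm u}^{\sss \Xc_{\ip{ns}+1:\ip{nt}}}$ is the Dirac mass at $\bm u$ and the difference vanishes identically), the sup-distance between $(s,t,\bm w)$ and $(s,s,\bm w)$ equals $t-s<\delta$, so equicontinuity together with $\Xb_n(s,s,\bm w)=0$ forces $|\Xb_n(s,t,\bm w)|\le\omega_\delta(\Xb_n)$ for \emph{every} $\bm w$; hence both the integral and the term $\Xb_n(s,t,\bm u)$ are bounded by $\omega_\delta(\Xb_n)$, and the difference by $2\,\omega_\delta(\Xb_n)$. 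In the complementary regime $t-s\ge\delta$, the substretch length satisfies $m>n\delta-1\to\infty$, so by monotonicity of $(h_m)$ one has $h_m\le h_n':=h_{\lceil n\delta-1\rceil}\to0$. There I would split the integral over $\{\|\bm w-\bm u\|_\infty<\delta\}$, where equicontinuity bounds the integrand by $\omega_\delta(\Xb_n)$, and over its complement, where the contribution is at most $2\|\Xb_n\|_\infty\, d\,h_n'/\delta^2$.

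Combining the two regimes yields the uniform bound
$$
\sup_{(s,t,\bm u)\in\Delta\times[0,1]^d}\Big|\int_{[0,1]^d}\Xb_n(s,t,\bm w)\dd\nu_{\bm u}^{\sss \Xc_{\ip{ns}+1:\ip{nt}}}(\bm w)-\Xb_n(s,t,\bm u)\Big|\le 2\,\omega_\delta(\Xb_n)+2\|\Xb_n\|_\infty\,\frac{d\,h_n'}{\delta^2}.
$$
To conclude, given $\eps,\eta>0$ I would pick $M$ with $\limsup_n\Pr(\|\Xb_n\|_\infty>M)<\eta/2$ and $\delta$ with $\limsup_n\Pr^*(\omega_\delta(\Xb_n)>\eps)<\eta/2$. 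Since $h_n'\to0$, for all large $n$ the second term is below $\eps$ on the event $\{\|\Xb_n\|_\infty\le M\}$, so the supremum exceeds $3\eps$ only on an event whose $\limsup_n$ probability is below $\eta$. As $\eps$ and $\eta$ are arbitrary, the supremum is $o_\Pr(1)$, as claimed.
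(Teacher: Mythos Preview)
Your proof is correct and follows essentially the same approach as the paper: rewrite the difference as an integral of the increment $\Xb_n(s,t,\bm w)-\Xb_n(s,t,\bm u)$, use asymptotic equicontinuity of $\Xb_n$ for nearby $\bm w$, use Chebyshev under Condition~\ref{cond:var:W:weak} to control the measure of far $\bm w$ when the substretch is long, and handle short substretches via $\Xb_n(s,s,\cdot)=0$ and equicontinuity. The only organizational differences are that you split first on $t-s$ and then on $\|\bm w-\bm u\|_\infty$ (the paper does it in the reverse order, with two separate thresholds $\eps$ and $\delta$), and you package everything into one explicit bound before the final $\eps$--$\eta$ argument; these are cosmetic rearrangements of the same argument.
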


\begin{proof}[\bf Proof]
  Let $|\cdot|_\infty$ denote the maximum norm on $\R^d$. For any $\eps>0$,
\begin{align*}
\sup_{\substack{(s,t) \in \Delta \\ \bm u \in [0, 1]^d}} & \left| \int_{[0,1]^d} \Xb_n(s,t,\bm w) \dd \nu_{\bm u}^{\sss \Xc_{\ip{ns}+1:\ip{nt}}}(\bm w) - \Xb_n(s,t,\bm u) \right| = \sup_{\substack{(s,t) \in \Delta \\ \bm u \in [0, 1]^d}} \left| \int_{[0,1]^d} \{ \Xb_n(s,t,\bm w) - \Xb_n(s,t,\bm u) \} \dd \nu_{\bm u}^{\sss \Xc_{\ip{ns}+1:\ip{nt}}}(\bm w) \right| \\
  \leq& \sup_{\substack{(s,t) \in \Delta \\ \bm u \in [0, 1]^d}} \left| \int_{\{\bm w \in [0,1]^d: |\bm u - \bm w|_\infty  \leq \eps \}} \{ \Xb_n(s,t,\bm w) - \Xb_n(s,t,\bm u) \} \dd \nu_{\bm u}^{\sss \Xc_{\ip{ns}+1:\ip{nt}}}(\bm w) \right| \\
                                             &+ \sup_{\substack{(s,t) \in \Delta \\ \bm u \in [0, 1]^d}} \left| \int_{\{\bm w \in [0,1]^d: |\bm u - \bm w|_\infty > \eps \}} \{ \Xb_n(s,t,\bm w) - \Xb_n(s,t,\bm u) \} \dd \nu_{\bm u}^{\sss \Xc_{\ip{ns}+1:\ip{nt}}}(\bm w) \right| \\
  \leq& \sup_{\substack{(s,t,\bm u, \bm w) \in \Delta \times [0,1]^{2d}\\|\bm u - \bm w|_{\infty} \leq \eps}} \left|\Xb_n(s,t,\bm w)-\Xb_n(s,t,\bm u) \right| + \sup_{\substack{(s,t) \in \Delta  \\ (\bm u, \bm w) \times [0,1]^{2d}}} \Big[ \left| \Xb_n(s,t,\bm w)-\Xb_n(s,t,\bm u) \right| \\
  &  \qquad \times \nu_{\bm u}^{\sss \Xc_{\ip{ns}+1:\ip{nt}}}(\{\bm w \in [0,1]^d:|\bm u - \bm w|_{\infty} > \eps \}) \Big].
\end{align*}
Since $\Xb_n \leadsto \Xb$ in $ \ell^\infty(\Delta \times [0,1]^d)$ and $\Xb$ has continuous trajectories almost surely, it is stochastically equicontinuous. Hence, for any given $\eta > 0$, we can choose $\eps=\eps(\eta)>0$ sufficiently small such that
\begin{equation*}
\limsup_{n \rightarrow \infty} \Pr \left\{\sup_{\substack{(s,t,\bm u, \bm w) \in \Delta \times [0,1]^{2d}\\|\bm u - \bm w|_{\infty} \leq \eps}} \left|\Xb_n(s,t,\bm w) - \Xb_n(s,t,\bm u) \right| > \eta \right\} \leq \eta.
\end{equation*}
Some thought then reveals that, to complete the proof, it suffices to show that
\begin{align}
  \label{eq:tmp}
  \sup_{\substack{(s,t) \in \Delta \\ (\bm u, \bm w) \times [0,1]^{2d}}} \Big[ \left|\Xb_n(s,t,\bm w)-\Xb_n(s,t,\bm u) \right| \nu_{\bm u}^{\sss \Xc_{\ip{ns}+1:\ip{nt}}}(\{\bm w \in [0,1]^d:|\bm u - \bm w|_{\infty}  > \eps \}) \Big] = o_\Pr(1).
\end{align}
For any $\delta > 0$, the supremum on the left-hand side of~\eqref{eq:tmp} is smaller than
\begin{multline*}
 \sup_{\substack{(s,t,\bm u, \bm w) \in \Delta \times [0,1]^{2d} \\ t - s \leq \delta}} \left[ \left|\Xb_n(s,t,\bm w)-\Xb_n(s,t,\bm u) \right| \times \nu_{\bm u}^{\sss \Xc_{\ip{ns}+1:\ip{nt}}}(\{\bm w \in [0,1]^d:|\bm u - \bm w|_{\infty} > \eps \}) \right] \\
 + \sup_{\substack{(s,t,\bm u, \bm w) \in \Delta \times [0,1]^{2d} \\ t - s > \delta}} \left[ \left|\Xb_n(s,t,\bm w)-\Xb_n(s,t,\bm u) \right| \times \nu_{\bm u}^{\sss \Xc_{\ip{ns}+1:\ip{nt}}}(\{\bm w \in [0,1]^d:|\bm u - \bm w|_{\infty} > \eps \}) \right],
\end{multline*}
which is in turn smaller than 
\begin{align*}
2 \sup_{\substack{(s,t,\bm u) \in \Delta \times [0,1]^d \\ t - s \leq \delta}} \left|\Xb_n(s,t,\bm u) \right| + 2 \sup_{(s,t,\bm u) \in \Delta \times [0,1]^d} \left|\Xb_n(s,t,\bm u) \right|  \times \sup_{\substack{(s,t,\bm u) \in \Delta \times [0,1]^d \\ t - s > \delta}} \nu_{\bm u}^{\sss \Xc_{\ip{ns}+1:\ip{nt}}}(\{\bm w \in [0,1]^d:|\bm u - \bm w|_{\infty} > \eps \}).
\end{align*}
Using again the fact that $\Xb_n$ is stochastically equicontinuous and that it vanishes on the subset $\{(s,s,\bm u) : s \in [0,1], \bm u \in [0,1]^d\}$ of $\Delta \times [0,1]^d$, for any given $\eta > 0$, we can choose $\delta=\delta(\eta)>0$ sufficiently small such that 
\begin{align*}
  \limsup_{n \rightarrow \infty} \Pr \left\{\sup_{\substack{(s,t,\bm u) \in \Delta \times [0,1]^d \\ t-s \leq \delta}} \left|\Xb_n(s,t,\bm u) \right| > \eta \right\} = \limsup_{n \rightarrow \infty} \Pr \left\{\sup_{\substack{(s,t,\bm u) \in \Delta \times [0,1]^d \\ t-s \leq \delta}} \left|\Xb_n(s,t,\bm u) - \Xb_n(s,s,\bm u) \right| > \eta \right\} \leq \eta.
\end{align*}
The convergence in~\eqref{eq:tmp} will then hold if we additionally show that
\begin{equation}
  \label{eq:tmp2}
  \sup_{(s,t,\bm u) \in \Delta \times [0,1]^d} \left|\Xb_n(s,t,\bm u) \right| \times \sup_{\substack{(s,t,\bm u) \in \Delta \times [0,1]^d \\ t - s > \delta}} \nu_{\bm u}^{\sss \Xc_{\ip{ns}+1:\ip{nt}}}(\{\bm w \in [0,1]^d:|\bm u - \bm w|_{\infty} > \eps \}) = o_\Pr(1).
\end{equation}
From the weak convergence of $\Xb_n$, we have that $\sup_{(s,t,\bm u) \in \Delta \times [0,1]^{d}} |\Xb_n(s,t,\bm u) |=O_\Pr(1)$ whereas the second factor in the product on the left-hand side of~\eqref{eq:tmp2} can be shown to converge to zero almost surely. Indeed, proceeding along the lines of \cite[Section~3]{SegSibTsu17} using Chebyshev's inequality and Condition~\ref{cond:var:W:weak}, for almost any sequence $\bm X_1,\bm X_2,\dots$, conditionally on $\bm X_1,\bm X_2,\dots$, we obtain that
\begin{align*}
  \sup_{\substack{(s,t,\bm u) \in \Delta \times [0,1]^d \\ t - s > \delta}} &\nu_{\bm u}^{\sss \Xc_{\ip{ns}+1:\ip{nt}}}(\{\bm w \in [0,1]^d:|\bm u - \bm w|_{\infty} > \eps \}) = \sup_{\substack{(s,t,\bm u) \in \Delta \times [0,1]^d \\ t - s > \delta}} \Pr \left\{ \left| \bm W_{\bm u}^{\sss \Xc_{\ip{ns}+1:\ip{nt}}}  - \bm u) \right|_{\infty} > \eps \mid \Xc_{\ip{ns}+1:\ip{nt}} \right\} \\
                                                                            &= \sup_{\substack{(s,t,\bm u) \in \Delta \times [0,1]^d \\ t - s > \delta}} \Pr \left[ \bigcup_{j=1}^d \left\{ \left| W_{j,u_j}^{\sss \Xc_{\ip{ns}+1:\ip{nt}}} - u_j \right| >\eps \right\} \mid \Xc_{\ip{ns}+1:\ip{nt}} \right] \\
                                                                            &\leq \sup_{\substack{(s,t,\bm u) \in \Delta \times [0,1]^d \\ t - s > \delta}} \sum_{j=1}^{d}\Pr \left\{ \left| W_{j,u_j}^{\sss \Xc_{\ip{ns}+1:\ip{nt}}}- u_j \right| >\eps \mid \Xc_{\ip{ns}+1:\ip{nt}} \right\} \leq \sup_{\substack{(s,t,\bm u) \in \Delta \times [0,1]^d \\ t-s > \delta}} \sum_{j=1}^{d}\frac{\Var(W_{j,u_j}^{\sss \Xc_{\ip{ns}+1:\ip{nt}}} \mid \Xc_{\ip{ns}+1:\ip{nt}} )}{\eps^2} \\
                                                                            &\leq \frac{d}{\eps^2} \sup_{\substack{(s,t) \in \Delta \\ t-s > \delta}} h_{\ip{nt} - \ip{ns}} \leq \frac{d}{\eps^2} \sup_{\substack{(s,t) \in \Delta\\ t-s > \delta}} h_{\ip{n(t-s) - 1}} \leq \frac{d}{\eps^2} h_{\ip{n \delta - 1}} = o(1).
\end{align*}
It follows that the second factor in the product on the left-hand side of~\eqref{eq:tmp2} converges almost surely to zero, which completes the proof.
\end{proof}

\begin{lem}
\label{lem:bias}
Assume that Conditions~\ref{cond:pd} and~\ref{cond:var:W} hold. Then, almost surely,
\begin{align*}
  \sup_{(s,t,\bm u) \in \Delta \times[0, 1]^d} &\sqrt{n}\lambda_n(s,t) \left| \int_{[0,1]^d} C(\bm w) \dd \nu_{\bm u}^{\sss \Xc_{\ip{ns}+1:\ip{nt}}}(\bm w)-C(\bm u) \right| = o(1).
\end{align*}
\end{lem}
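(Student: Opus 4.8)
The plan is to recognise the integral as a conditional expectation and to bound the resulting smoothing bias by a quantity of order $1/m$, where $m=\ip{nt}-\ip{ns}=n\lambda_n(s,t)$ is the length of the substretch $\Xc'=\Xc_{\ip{ns}+1:\ip{nt}}$ from which the smoothing distribution is built. Indeed, $\int_{[0,1]^d}C(\bm w)\dd\nu_{\bm u}^{\Xc'}(\bm w)=\Ex\{C(\bm W_{\bm u}^{\Xc'})\mid\Xc'\}$ and $\sqrt n\lambda_n(s,t)=m/\sqrt n$, while Condition~\ref{cond:var:W}, applied with sample size $m$, gives the \emph{deterministic} bound $\Var(W_{j,u_j}^{\Xc'}\mid\Xc')\le\kappa u_j(1-u_j)/m$ for every data realisation and every $\bm u$; this is why the conclusion can be asserted almost surely (in fact surely). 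When $m=0$ the convention that $\nu_{\bm u}^{\Xc'}$ is the Dirac mass at $\bm u$ makes the summand vanish, so I may assume $m\ge1$ throughout.

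Writing $\bm W=\bm W_{\bm u}^{\Xc'}$ with components $W_1,\dots,W_d$, the first step is a coordinatewise telescoping expansion of $C(\bm W)-C(\bm u)$ in which the coordinates of $\bm u$ are replaced by those of $\bm W$ one at a time. Since a copula is $1$-Lipschitz, and hence absolutely continuous, in each argument, Condition~\ref{cond:pd} lets me write the $j$th increment as $\int_{u_j}^{W_j}\dot C_j(\bm y_j(v))\dd v$, where $\bm y_j(v)$ agrees with $\bm W$ in coordinates $1,\dots,j-1$, equals $v$ in coordinate $j$, and agrees with $\bm u$ in coordinates $j+1,\dots,d$. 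Subtracting the linear term $\dot C_j(\bm u)(W_j-u_j)$ and summing over $j$, the decisive cancellation is that $\Ex(W_j-u_j\mid\Xc')=0$ by construction, so the first-order part disappears and
\[
\Ex\{C(\bm W)\mid\Xc'\}-C(\bm u)=\sum_{j=1}^d\Ex\left\{\int_{u_j}^{W_j}\bigl[\dot C_j(\bm y_j(v))-\dot C_j(\bm u)\bigr]\dd v\;\Big|\;\Xc'\right\}.
\]
Bounding the $j$th integral by $|W_j-u_j|\,\omega_j$, with $\omega_j$ the oscillation of $\dot C_j$ between $\bm u$ and the points $\bm y_j(v)$, Cauchy--Schwarz together with the variance bound yields $\bigl|\Ex\{C(\bm W)\mid\Xc'\}-C(\bm u)\bigr|\le\sum_{j=1}^d\sqrt{\kappa u_j(1-u_j)/m}\,\sqrt{\Ex(\omega_j^2\mid\Xc')}$. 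Multiplying by $\sqrt n\lambda_n(s,t)=m/\sqrt n$ and using $m/\sqrt n\cdot m^{-1/2}=\sqrt{\lambda_n(s,t)}\le1$ reduces the whole problem to showing that $\sup_{\bm u\in[0,1]^d}u_j(1-u_j)\,\Ex(\omega_j^2\mid\Xc')\to0$, uniformly over $(s,t)$, as the smoothing gets finer.

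The last step is a $\delta$-then-$n$ splitting, and the boundary behaviour of $\dot C_j$ is the main obstacle. On $\{t-s\le\delta\}$ one has $\sqrt{\lambda_n(s,t)}\le\sqrt{\delta+1/n}$ and the trivial bound $\omega_j\le1$ (as $\dot C_j\in[0,1]$), so this region contributes $O(\sqrt\delta)$; on $\{t-s>\delta\}$ one has $m\ge\ip{n\delta}-1\to\infty$. The difficulty is that $\dot C_j$ is continuous only on $\{u_j\in(0,1)\}$ and was set to $0$ on the boundary, so it is not uniformly continuous on all of $[0,1]^d$ and the naive ``$\bm W\to\bm u\Rightarrow\omega_j\to0$'' argument fails near $\{u_j\in\{0,1\}\}$. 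I would circumvent this by splitting over $u_j$: for $u_j\notin[\eta,1-\eta]$ the prefactor $u_j(1-u_j)<\eta$ absorbs the bounded oscillation, using also that $W_j=u_j$ deterministically when $u_j\in\{0,1\}$ by~\eqref{eq:var:W}; for $u_j\in[\eta,1-\eta]$ I would invoke the uniform continuity of $\dot C_j$ on the \emph{compact} slab $[0,1]^{j-1}\times[\eta/2,1-\eta/2]\times[0,1]^{d-j}$, and use Chebyshev's inequality with Condition~\ref{cond:var:W} to control the event that $\bm W$ (hence $\bm y_j(v)$) leaves this slab or strays farther than $\rho$ from $\bm u$, an event of conditional probability $O(1/m)$. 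This gives $\sup_{u_j\in[\eta,1-\eta]}\Ex(\omega_j^2\mid\Xc')\to0$ as $m\to\infty$; letting $\eta\downarrow0$, and then $\delta\downarrow0$ after $n\to\infty$, delivers the claimed uniform $o(1)$, and since every bound used was deterministic the statement holds almost surely.
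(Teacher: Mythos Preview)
Your argument is correct and the overall architecture mirrors the paper's: exploit $\Ex(W_j-u_j\mid\Xc')=0$ to kill the first-order term, use the variance bound from Condition~\ref{cond:var:W} together with Cauchy--Schwarz, split according to whether $u_j$ is near the boundary (where the factor $u_j(1-u_j)$ absorbs the bounded oscillation) or in a compact slab (where uniform continuity of $\dot C_j$ and Chebyshev's inequality finish the job). Two technical choices differ. First, you expand $C(\bm W)-C(\bm u)$ by a \emph{coordinatewise telescope}, integrating $\dot C_j$ along axis-parallel segments; the paper instead uses the line integral $C(\bm w)-C(\bm u)=\sum_{j}(w_j-u_j)\int_0^1\dot C_j\{\bm u+t(\bm w-\bm u)\}\,\dd t$ along the straight segment from $\bm u$ to $\bm w$. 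Both are valid under Condition~\ref{cond:pd} and lead to the same estimates; the line integral has the mild advantage that the evaluation point depends on a single scalar $t$, while your $\bm y_j(v)$ mixes random coordinates $W_1,\dots,W_{j-1}$ with deterministic ones, which is harmless but slightly heavier notationally. Second, you split the time domain at $t-s\le\delta$ and send $\delta\downarrow0$ after $n\to\infty$; the paper splits at $l-k\le a_n$ with $a_n=\ip{n^{1/3}}$, which makes the short-stretch part go to zero directly (since $(a_n+1)/\sqrt n\to0$) without a subsequent $\delta$-limit. Either organisation works; the paper's avoids the double-limit bookkeeping, yours keeps the prefactor $\sqrt{\lambda_n(s,t)}$ explicit throughout, which makes the role of the substretch length more transparent.
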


\begin{proof}[\bf Proof]
Let $a_n = \ip{n^{1/3}}$, $n \in \N$. With  probability 1, one has  that
\begin{multline*}
\sup_{(s,t,\bm u) \in \Delta \times[0, 1]^d} \sqrt{n}\lambda_n(s,t) \left| \int_{[0,1]^d} C(\bm w) \dd \nu_{\bm u}^{\sss \Xc_{\ip{ns}+1:\ip{nt}}}(\bm w)-C(\bm u) \right| \\ = \max_{1 \leq k \leq l \leq n} \left\{ \frac{l-k+1}{\sqrt{n}} \sup_{\bm u \in [0, 1]^d}  \left| \int_{[0,1]^d} C(\bm w) \dd \nu_{\bm u}^{\sss \Xc_{k:l}}(\bm w)-C(\bm u) \right|\right\} \leq \max(L_n,M_n),
\end{multline*}
where
\begin{align}
  \nonumber
  L_n &= \max_{1 \leq k \leq l \leq n \atop l-k \leq a_n} \left\{ \frac{l-k+1}{\sqrt{n}} \sup_{\bm u \in [0, 1]^d}  \left| \int_{[0,1]^d} C(\bm w) \dd \nu_{\bm u}^{\sss \Xc_{k:l}}(\bm w)-C(\bm u) \right|\right\},\\
  \label{eq:Mn}
  M_n &= \max_{1 \leq k \leq l \leq n \atop l-k > a_n} \left\{ \sqrt{l-k+1} \sup_{\bm u \in [0, 1]^d}  \left| \int_{[0,1]^d} C(\bm w) \dd \nu_{\bm u}^{\sss \Xc_{k:l}}(\bm w)-C(\bm u) \right|\right\}.
\end{align}
Since $0 \leq C \leq 1$, we have that $\sup_{\bm u \in [0, 1]^d}  \left| \int_{[0,1]^d} C(\bm w) \dd \nu_{\bm u}^{\sss \Xc_{k:l}}(\bm w)-C(\bm u) \right| \leq 1$ almost surely for all $1 \leq k \leq l \leq n$ and, therefore, with  probability 1, that
$$
L_n \leq \max_{1 \leq k \leq l \leq n \atop l-k \leq a_n} \frac{l-k+1}{\sqrt{n}}  \leq \frac{a_n + 1}{\sqrt{n}} = o(1).
$$
The aim of the remainder of this proof is to show that the term $M_n$ in~\eqref{eq:Mn} converges to zero almost surely as well. To do so, it suffices to show that, for almost any sequence $\bm X_1,\bm X_2,\dots$, conditionally on $\bm X_1,\bm X_2,\dots$, $M_n$ converges to zero. We thus reason conditionally on $\bm X_1,\bm X_2,\dots$ in the rest of this proof.

Let $\eta > 0$ and let us show that $M_n$ can be made smaller than $d\eta$ provided $n$ is large enough. The forthcoming arguments are very close to those used in the proof of Proposition~3.4 in \citep{SegSibTsu17}. We provide all the steps nonetheless for the sake of completeness.

Given $\bm u, \bm w \in [0,1]^d$, set $\bm w(t) = \bm u + t (\bm w - \bm u)$, $t \in [0,1]$. The function $G(t) = C\{\bm w(t) \}$, $t \in [0,1]$, is continuous on $[0,1]$ and, by Condition~\ref{cond:pd}, is continuously differentiable on $(0,1)$ with derivative
$$
G'(t) = \sum_{j=1}^d (w_j - u_j) \dot C_j \{ \bm w(t)\}, \qquad t \in (0,1).
$$
By the fundamental theorem of calculus, $G(1) - G(0) = \int_0^1 G'(t) \dd t$, that is,
$$
C(\bm w) - C(\bm u) = \sum_{j=1}^d (w_j - u_j)  \int_0^1 \dot C_j \{ \bm w(t)\} \dd t.
$$
Note that, under Condition~\ref{cond:pd} and with the adopted conventions, some thought reveals that the previous equality holds no matter how $\bm u$ and $\bm w$ are chosen in $[0,1]^d$. Using Fubini's theorem, we then obtain that, for any $\bm u \in [0,1]^d$ and $1 \leq k \leq l \leq n$,
$$
\int_{[0,1]^d} \{ C(\bm w) - C(\bm u) \} \dd \nu_{\bm u}^{\sss \Xc_{k:l}}(\bm w)=\sum_{j=1}^d\int_0^1\left\{ \int_{[0,1]^d}(w_j-u_j)\dot{C}_j \{ \bm w(t)\} \dd \nu_{\bm u}^{\sss \Xc_{k:l}}(\bm w)\right\} \dd t,
$$
which implies that
\begin{align*}
  M_n  &= \max_{1 \leq k \leq l \leq n \atop l-k > a_n} \left\{ \sqrt{l-k+1} \sup_{\bm u \in [0, 1]^d} \left| \sum_{j=1}^d\int_0^1\left\{ \int_{[0,1]^d}(w_j-u_j)\dot{C}_j \{ \bm w(t)\} \dd \nu_{\bm u}^{\sss \Xc_{k:l}}(\bm w)\right\} \dd t \right|\right\}\\
       & \leq \max_{1 \leq k \leq l \leq n \atop l-k > a_n} \left\{ \sqrt{l-k+1} \sum_{j=1}^d \sup_{\bm u \in [0, 1]^d} \left| \int_0^1\left\{ \int_{[0,1]^d}(w_j-u_j)\dot{C}_j \{ \bm w(t)\} \dd \nu_{\bm u}^{\sss \Xc_{k:l}}(\bm w)\right\} \dd t \right|\right\} \leq \sum_{j=1}^d I_{j,n},
\end{align*}
where, for any $j \in \{1,\dots,d\}$,
\begin{equation*}
I_{j,n} = \max_{1 \leq k \leq l \leq n \atop l-k > a_n} \left\{ \sqrt{l-k+1} \int_0^1\sup_{\bm u \in [0, 1]^d}\left|\int_{[0,1]^d}(w_j-u_j)\dot{C}_j \{ \bm w(t) \} \dd \nu_{\bm u}^{\sss \Xc_{k:l}}(\bm w)\right|\dd t \right\}.
\end{equation*}
Fix $j \in \{1,\dots,d\}$. We shall now show that, provided $n$ is large enough, $I_{j,n}$ is smaller than $\eta$. For any $\delta \in (0,1/2]$, we have that $I_{j,n} \leq J_{j,n,\delta} + K_{j,n,\delta}$, where
\begin{align}
  \nonumber
  J_{j,n,\delta} &= \max_{1 \leq k \leq l \leq n \atop l-k > a_n} \left\{ \sqrt{l-k+1} \int_0^1\sup_{\substack{\bm u \in [0, 1]^d \\ u_j \in [0,\delta)\cup(1-\delta,1]}} \left| \int_{[0,1]^d}(w_j-u_j) \dot{C}_j\{\bm w(t)\} \dd \nu_{\bm u}^{\sss \Xc_{k:l}}(\bm w) \right| \dd t \right\}, \\
  \label{eq:K:j:n:delta}
  K_{j,n,\delta} &= \max_{1 \leq k \leq l \leq n \atop l-k > a_n} \left\{ \sqrt{l-k+1} \int_0^1\sup_{\substack{\bm u \in [0, 1]^d \\ u_j \in [\delta,1-\delta]}} \left| \int_{[0,1]^d}(w_j-u_j) \dot{C}_j\{\bm w(t)\} \dd \nu_{\bm u}^{\sss \Xc_{k:l}}(\bm w) \right| \dd t \right\}.
\end{align}

\emph{Term $J_{j,n,\delta}$:} From the monotonicity and Lipschitz continuity of $C$, $0 \leq \dot C_j \leq 1$ \cite[see, e.g.,][Section 2.2]{Nel06} and therefore, using additionally Hölder's inequality and Condition~\ref{cond:var:W},
\begin{align*}
 J_{j,n,\delta} \leq& \max_{1 \leq k \leq l \leq n \atop l-k > a_n} \left\{ \sqrt{l-k+1} \sup_{\substack{\bm u \in [0, 1]^d \\ u_j \in [0,\delta)\cup(1-\delta,1]}}\int_{[0,1]^d}|w_j-u_j|\dd \nu_{\bm u}^{\sss \Xc_{k:l}}(\bm w)\right\} \\
  \leq& \max_{1 \leq k \leq l \leq n \atop l-k > a_n} \left\{ \sqrt{l-k+1} \sup_{\substack{\bm u \in [0, 1]^d \\ u_j \in [0,\delta)\cup(1-\delta,1]}} \sqrt{ \int_{[0,1]^d} \{ w_j-\Ex(W_{j,u_j}^{\sss \Xc_{k:l}} \mid \Xc_{k:l}) \}^2\dd \nu_{\bm u}^{\sss \Xc_{k:l}}(\bm w)}\right\} \\
  =& \max_{1 \leq k \leq l \leq n \atop l-k > a_n} \left\{ \sqrt{l-k+1} \sup_{\substack{\bm u \in [0, 1]^d \\ u_j \in [0,\delta)\cup(1-\delta,1]}} \sqrt{\Var(W_{j,u_j}^{\sss \Xc_{k:l}} \mid \Xc_{k:l})}\right\} \leq \sup_{u \in [0,\delta)\cup(1-\delta,1]}\sqrt{\kappa u(1-u)}.
\end{align*}
Hence, we can choose $\delta = \delta(\eta, \kappa)$ sufficiently small such that $J_{j,n,\delta} < \eta/3$.

\emph{Term $K_{j,n,\delta}$:} Since $\int_{[0,1]^d}(w_j - u_j)\dd \nu_{\bm u}^{\sss \Xc_{k:l}}(\bm w) = 0$ for all $1 \leq k \leq l \leq n$, we can rewrite the term $K_{j,n,\delta}$ in~\eqref{eq:K:j:n:delta} as
$$
 K_{j,n,\delta} = \max_{1 \leq k \leq l \leq n \atop l-k > a_n}\left\{\sqrt{l-k+1} \int_0^1\sup_{\substack{\bm u \in [0, 1]^d \\ u_j \in [\delta,1-\delta]}} \left| \int_{[0,1]^d}(w_j-u_j) \left[ \dot{C}_j\{\bm w(t)\} -\dot C_j(\bm u) \right] \dd \nu_{\bm u}^{\sss \Xc_{k:l}}(\bm w) \right| \dd t\right\}.
$$
Then, for any $\eps \in (0, \delta/2)$,
\begin{align*}
  K_{j,n,\delta} \leq& \max_{1 \leq k \leq l \leq n \atop l-k > a_n} \left\{\sqrt{l-k+1} \int_0^1\left[\sup_{\substack{\bm u \in [0, 1]^d \\ u_j \in [\delta,1-\delta]}} \left|\int_{\{\bm w \in [0,1]^d : |\bm w-\bm u|_{\infty}\leq \eps \}}(w_j-u_j)\left[ \dot{C}_j \{ \bm w(t) \} - \dot{C}_j(\bm u) \right]\dd \nu_{\bm u}^{\sss \Xc_{k:l}}(\bm w)\right|\right]\dd t \right\} \\
                     &+ \max_{1 \leq k \leq l \leq n \atop l-k > a_n} \left\{\sqrt{l-k+1} \int_0^1\left[\sup_{\substack{\bm u \in [0, 1]^d \\ u_j \in [\delta,1-\delta]}} \left|\int_{\{\bm w \in [0,1]^d:|\bm w-\bm u|_{\infty}> \eps\}}(w_j-u_j)\left[ \dot{C}_j \{ \bm w(t) \} - \dot{C}_j(\bm u) \right]\dd \nu_{\bm u}^{\sss \Xc_{k:l}}(\bm w)\right|\right]\dd t\right\} \\
   \leq& \; K_{j,n,\delta,\eps}' + K_{j,n,\delta,\eps}'',
\end{align*}
where
\begin{align*}
  K_{j,n,\delta,\eps}' &= \sup_{\substack{\bm u \in [0, 1]^d \\ u_j \in [\delta,1-\delta]}} \sup_{\substack{\bm w \in [0,1]^d \\ |\bm w-\bm u|_{\infty}\leq \eps }}\left|\dot{C}_j(\bm w) - \dot{C}_j(\bm u)\right| \times \max_{1 \leq k \leq l \leq n \atop l-k > a_n}\left\{\sqrt{l-k+1} \sup_{\substack{\bm u \in [0, 1]^d \\ u_j \in [\delta,1-\delta]}}\int_{[0,1]^d}\left| w_j-u_j\right|\dd \nu_{\bm u}^{\sss \Xc_{k:l}}(\bm w)\right\}, \\
K_{j,n,\delta,\eps}'' &= \max_{1 \leq k \leq l \leq n \atop l-k > a_n} \left\{\sqrt{l-k+1}\sup_{\substack{\bm u \in [0, 1]^d \\ u_j \in [\delta,1-\delta]}} \int_{[0,1]^d}\1\{|\bm w-\bm u|_\infty>\eps\} |w_j-u_j|\dd \nu_{\bm u}^{\sss \Xc_{k:l}}(\bm w)\right\},
\end{align*}
where we have used the fact that, for all $\bm u \in [0,1]^d$ and $t \in (0,1)$,
$$
\sup_{\substack{\bm w \in [0,1]^d \\ |\bm w-\bm u|_{\infty}\leq \eps}} \left|\dot{C}_j \{ \bm w(t) \} - \dot{C}_j(\bm u)\right| \leq \sup_{\substack{\bm w \in [0,1]^d \\ |\bm w-\bm u|_{\infty}\leq \eps}} \left|\dot{C}_j(\bm w) - \dot{C}_j(\bm u)\right|
$$
and, again, that $0 \leq \dot C_j \leq 1$. Using Condition~\ref{cond:var:W}, we obtain that, for any $1 \leq k \leq l \leq n$,
\begin{align}
 \label{ineq:1}
\sup_{\substack{\bm u \in [0, 1]^d \\ u_j \in [\delta,1-\delta]}}\int_{[0,1]^d}\left| w_j-u_j\right|\dd \nu_{\bm u}^{\sss \Xc_{k:l}}(\bm w) \leq \sup_{\substack{\bm u \in [0, 1]^d \\ u_j \in [\delta,1-\delta]}} \sqrt{\Var(W_{j,u_j}^{\sss \Xc_{k:l}} \mid \Xc_{k:l})} \leq \sup_{u \in [0,1]} \sqrt{\frac{\kappa u(1-u)}{l-k+1}} \leq \sqrt{\frac{\kappa}{l-k+1}}.
\end{align}
Since Condition~\ref{cond:pd} holds, by uniform continuity of $\dot C_j$ on the set $\{\bm u \in [0, 1]^d : u_j \in [\delta/2,1-\delta/2] \}$, we can choose $\eps = \eps(\delta,\eta,\kappa)>0$ sufficiently small such that
\begin{align}
 \label{ineq:2}
\sup_{\substack{\bm u \in [0, 1]^d \\ u_j \in [\delta,1-\delta]}} \sup_{\substack{\bm w \in [0,1]^d \\ |\bm w-\bm u|_{\infty}\leq \eps}} \left|\dot{C}_j(\bm w) - \dot{C}_j(\bm u)\right| \leq \frac{\eta}{3\sqrt{\kappa}}.
\end{align}
Combining \eqref{ineq:1} and \eqref{ineq:2}, we obtain that $K_{j,n,\delta,\eps}' \leq \eta/3$. As far as $K_{j,n,\delta,\eps}''$ is concerned, using the Cauchy-Schwarz inequality,
\begin{align}
  \nonumber
  K_{j,n,\delta,\eps}'' &\leq \max_{1 \leq k \leq l \leq n \atop l-k > a_n} \left\{\sqrt{l-k+1}\sup_{\substack{\bm u \in [0, 1]^d \\ u_j \in [\delta,1-\delta]}} \sqrt{\int_{[0,1]^d}\1\{|\bm w-\bm u|_\infty > \eps \}  \dd \nu_{\bm u}^{\sss \Xc_{k:l}}(\bm w) \int_{[0,1]^d} (w_j-u_j)^2 \dd \nu_{\bm u}^{\sss \Xc_{k:l}}(\bm w)}\right\}\\
  \label{ineq:K:2}
                        &\leq \max_{1 \leq k \leq l \leq n \atop l-k > a_n}\left\{ \sqrt{l-k+1}  \sup_{\substack{\bm u \in [0, 1]^d \\ u_j \in [\delta,1-\delta]}} \sqrt{\nu_{\bm u}^{\sss \Xc_{k:l}}(\{\bm w \in [0,1]^d : |\bm w-\bm u|_\infty>\eps \})} \sup_{\substack{\bm u \in [0, 1]^d \\ u_j \in [\delta,1-\delta]}} \sqrt{\Var(W_{j,u_j}^{\sss \Xc_{k:l}} \mid \Xc_{k:l})}\right\}.
\end{align}
Then, using Chebyshev's inequality and Condition~\ref{cond:var:W}, we obtain that, for any $\bm u \in [0, 1]^d$ such that $u_j \in [\delta,1-\delta]$ and for any $1 \leq k \leq l \leq n$,
\begin{align}
  \nonumber
  \nu_{\bm u}^{\sss \Xc_{k:l}}(\{\bm w \in [0,1]^d:|\bm u - \bm w|_{\infty} > \eps \}) &=  \Pr \left[ \bigcup_{j=1}^d \left\{ \left| W_{j,u_j}^{\sss \Xc_{k:l}} - u_j \right| >\eps \right\} \mid \Xc_{k:l} \right] \leq \sum_{j=1}^{d}\Pr \left\{ \left| W_{j,u_j}^{\sss \Xc_{k:l}} - u_j \right| >\eps \mid \Xc_{k:l} \right\} \\
  \label{ineq:K:3}
                                                                                       &\leq  \sum_{j=1}^{d}\frac{\Var(W_{j,u_j}^{\sss \Xc_{k:l}} \mid \Xc_{k:l})}{\eps^2} \leq \frac{\kappa d}{(l-k+1) \eps^2}.
\end{align}
Furthermore, using Condition~\ref{cond:var:W}, for any $1 \leq k \leq l \leq n$,
\begin{align}
 \label{ineq:K:4}
\sup_{\substack{\bm u \in [0, 1]^d \\ u_j \in [\delta,1-\delta]}} \sqrt{\Var(W_{j,u_j}^{\sss \Xc_{k:l}} \mid \Xc_{k:l})} \leq  \sup_{\substack{\bm u \in [0, 1]^d \\ u_j \in [\delta,1-\delta]}} \sqrt{\frac{\kappa u_j(1-u_j)}{l-k+1}} \leq \sqrt{\frac{\kappa}{l-k+1}}
\end{align}
Hence, from~\eqref{ineq:K:2}, \eqref{ineq:K:3} and \eqref{ineq:K:4}, we obtain that
$$
K_{j,n,\delta,\eps}'' \leq \max_{1 \leq k \leq l \leq n \atop l-k > a_n} \left\{\sqrt{l-k+1} \times \sqrt{\frac{\kappa d}{(l-k+1) \eps^2}} \times \sqrt{\frac{\kappa}{l-k+1}} \right\} \leq \frac{\kappa \sqrt{d}}{\eps} \times \max_{1 \leq k \leq l \leq n \atop l-k > a_n} \sqrt{\frac{1}{l-k+1}} \leq \frac{\kappa \sqrt{d}}{\eps} \times  \sqrt{\frac{1}{a_n+1}},
$$
which implies that, for $n$ sufficiently large, $K_{j,n,\delta,\eps}'' < \eta/3$. Thus, provided that $n$ is large enough, this successively implies that $K_{j,n,\delta} \leq 2\eta/3$, that $I_{j,n} \leq \eta$ and finally, since $M_n \leq \sum_{j=1}^d I_{j,n}$, that $M_n \leq d \eta$. The latter holds conditionally on $\bm X_1, \bm X_2, \dots$ for almost any sequence $\bm X_1, \bm X_2, \dots$, which completes the proof.
\end{proof}

\begin{proof}[\bf Proof of Theorem~\ref{thm:Cb:n:nu}]
From~\eqref{eq:C:kl:nu} and~\eqref{eq:Cb:n:nu}, for any $(s,t) \in \Delta$ and $\bm u \in [0,1]^d$, we have that
\begin{align}
  \nonumber
  \Cb_n^\nu(s,t,\bm u) =& \sqrt{n}\lambda_n(s,t) \left[ \int_{[0,1]^d}\left\{ C_{\ip{ns} +1 :\ip{nt}}(\bm w) - C(\bm w) + C(\bm w) \right\} \dd \nu_{\bm u}^{\sss \Xc_{\ip{ns}+1:\ip{nt}}}(\bm w)-C(\bm u) \right]\\
  \label{eq:decomp:nu:proc}
  =& \int_{[0,1]^d}\Cb_n(s,t,\bm w) \dd \nu_{\bm u}^{\sss \Xc_{\ip{ns}+1:\ip{nt}}}(\bm w) + \sqrt{n}\lambda_n(s,t) \left\{ \int_{[0,1]^d} C(\bm w) \dd \nu_{\bm u}^{\sss \Xc_{\ip{ns}+1:\ip{nt}}}(\bm w) - C(\bm u) \right\}.
\end{align}
Following \cite{SegSibTsu17}, the first (resp.\ second) term on the right of~\eqref{eq:decomp:nu:proc} can be called the stochastic (resp.\ bias) term in the decomposition of $\Cb_n^\nu(s,t,\bm u)$. Notice that both terms are equal to zero for any $(s,t) \in \Delta$ such that $\ip{ns} = \ip{nt}$. It then immediately follows from the triangular inequality that
\begin{multline*}
\sup_{(s,t) \in \Delta \atop \bm u \in [0,1]^d} \left|\Cb_n^\nu(s,t,\bm u) - \Cb_n(s,t,\bm u) \right| \leq \sup_{(s,t) \in \Delta \atop \bm u \in [0,1]^d} \left| \int_{[0,1]^d}\Cb_n(s,t,\bm w)\dd \nu_{\bm u}^{\sss \Xc_{\ip{ns}+1:\ip{nt}}}(\bm w) - \Cb_n(s,t,\bm u) \right| \\
+ \sup_{(s,t) \in \Delta \atop \bm u \in [0,1]^d} \sqrt{n}\lambda_n(s,t) \left| \int_{[0,1]^d} C(\bm w) \dd \nu_{\bm u}^{\sss \Xc_{\ip{ns}+1:\ip{nt}}}(\bm w)-C(\bm u)  \right|,
\end{multline*}
and the desired result is finally a direct consequence of Lemmas~\ref{lem:stochastic} and~\ref{lem:bias}.
\end{proof}

\section*{Acknowledgments}

The authors would like to thank an Associate Editor and two anonymous Referees for their very constructive comments on an earlier version of this manuscript as well as Mark Holmes for fruitful discussions.

\bibliographystyle{myjmva}
\bibliography{biblio}

\begin{thebibliography}{28}
\expandafter\ifx\csname natexlab\endcsname\relax\def\natexlab#1{#1}\fi
\providecommand{\bibinfo}[2]{#2}
\ifx\xfnm\relax \def\xfnm[#1]{\unskip,\space#1}\fi
\bibitem[{B\"ucher(2015)}]{Buc15}
\bibinfo{author}{A.~B\"ucher}, \bibinfo{title}{A note on weak convergence of
  the sequential multivariate empirical process under strong mixing},
  \bibinfo{journal}{Journal of Theoretical Probability} \bibinfo{volume}{28}
  (\bibinfo{year}{2015}) \bibinfo{pages}{1028--1037}.
\bibitem[{B\"ucher and Kojadinovic(2016)}]{BucKoj16}
\bibinfo{author}{A.~B\"ucher}, \bibinfo{author}{I.~Kojadinovic},
  \bibinfo{title}{A dependent multiplier bootstrap for the sequential empirical
  copula process under strong mixing}, \bibinfo{journal}{Bernoulli}
  \bibinfo{volume}{22} (\bibinfo{year}{2016}) \bibinfo{pages}{927--968}.
\bibitem[{B\"ucher et~al.(2014)B\"ucher, Kojadinovic, Rohmer and
  Segers}]{BucKojRohSeg14}
\bibinfo{author}{A.~B\"ucher}, \bibinfo{author}{I.~Kojadinovic},
  \bibinfo{author}{T.~Rohmer}, \bibinfo{author}{J.~Segers},
  \bibinfo{title}{Detecting changes in cross-sectional dependence in
  multivariate time series}, \bibinfo{journal}{Journal of Multivariate
  Analysis} \bibinfo{volume}{132} (\bibinfo{year}{2014})
  \bibinfo{pages}{111--128}.
\bibitem[{B\"ucher and Volgushev(2013)}]{BucVol13}
\bibinfo{author}{A.~B\"ucher}, \bibinfo{author}{S.~Volgushev},
  \bibinfo{title}{Empirical and sequential empirical copula processes under
  serial dependence}, \bibinfo{journal}{Journal of Multivariate Analysis}
  \bibinfo{volume}{119} (\bibinfo{year}{2013}) \bibinfo{pages}{61--70}.
\bibitem[{Clayton(1978)}]{Cla78}
\bibinfo{author}{D.~Clayton}, \bibinfo{title}{A model for association in
  bivariate life tables and its application in epidemiological studies of
  familial tendency in chronic disease incidence},
  \bibinfo{journal}{Biometrika} \bibinfo{volume}{65} (\bibinfo{year}{1978})
  \bibinfo{pages}{141--151}.
\bibitem[{Deheuvels(1979)}]{Deh79}
\bibinfo{author}{P.~Deheuvels}, \bibinfo{title}{La fonction de d\'ependance
  empirique et ses propri\'et\'es: un test non param\'etrique
  d'ind\'ependance}, \bibinfo{journal}{Acad. Roy. Belg. Bull. Cl. Sci. 5th
  Ser.} \bibinfo{volume}{65} (\bibinfo{year}{1979}) \bibinfo{pages}{274--292}.
\bibitem[{Durante and Sempi(2015)}]{DurSem15}
\bibinfo{author}{F.~Durante}, \bibinfo{author}{C.~Sempi},
  \bibinfo{title}{Principles of Copula Theory}, \bibinfo{publisher}{CRC Press},
  \bibinfo{address}{Boca Raton, Florida}, \bibinfo{year}{2015}.
\bibitem[{Genest(1987)}]{Gen87}
\bibinfo{author}{C.~Genest}, \bibinfo{title}{Frank's family of bivariate
  distributions}, \bibinfo{journal}{Biometrika} \bibinfo{volume}{74}
  (\bibinfo{year}{1987}) \bibinfo{pages}{549--555}.
\bibitem[{Genest et~al.(2017)Genest, Ne{\v s}lehov{\'a} and
  R\'emillard}]{GenNesRem17}
\bibinfo{author}{C.~Genest}, \bibinfo{author}{J.~G. Ne{\v s}lehov{\'a}},
  \bibinfo{author}{B.~R\'emillard}, \bibinfo{title}{Asymptotic behavior of the
  empirical multilinear copula process under broad conditions},
  \bibinfo{journal}{Journal of Multivariate Analysis} \bibinfo{volume}{20}
  (\bibinfo{year}{2017}) \bibinfo{pages}{82--110}.
\bibitem[{Genest and {Ne\v slehov\'a}(2007)}]{GenNes07}
\bibinfo{author}{C.~Genest}, \bibinfo{author}{J.~{Ne\v slehov\'a}},
  \bibinfo{title}{A primer on copulas for count data}, \bibinfo{journal}{The
  Astin Bulletin} \bibinfo{volume}{37} (\bibinfo{year}{2007})
  \bibinfo{pages}{475--515}.
\bibitem[{Gumbel(1961)}]{Gum61}
\bibinfo{author}{E.~Gumbel}, \bibinfo{title}{Bivariate logistic distributions},
  \bibinfo{journal}{Journal of the American Statistical Association}
  \bibinfo{volume}{56} (\bibinfo{year}{1961}) \bibinfo{pages}{335--349}.
\bibitem[{Hofert et~al.(2022)Hofert, Kojadinovic, M\"achler and Yan}]{copula}
\bibinfo{author}{M.~Hofert}, \bibinfo{author}{I.~Kojadinovic},
  \bibinfo{author}{M.~M\"achler}, \bibinfo{author}{J.~Yan},
  \bibinfo{title}{copula: {M}ultivariate dependence with copulas},
  \bibinfo{year}{2022}. \bibinfo{note}{{R} package version 1.1-0}.
\bibitem[{Hofert et~al.(2018)Hofert, Kojadinovic, Mächler and
  Yan}]{HofKojMaeYan18}
\bibinfo{author}{M.~Hofert}, \bibinfo{author}{I.~Kojadinovic},
  \bibinfo{author}{M.~Mächler}, \bibinfo{author}{J.~Yan},
  \bibinfo{title}{{Elements of Copula Modeling with R}},
  \bibinfo{publisher}{Springer}, \bibinfo{year}{2018}.
\bibitem[{Hougaard(1986)}]{Hou86}
\bibinfo{author}{P.~Hougaard}, \bibinfo{title}{A class of multivanate failure
  time distributions}, \bibinfo{journal}{Biometrika} \bibinfo{volume}{73}
  (\bibinfo{year}{1986}) \bibinfo{pages}{671--678}.
\bibitem[{Janssen et~al.(2012)Janssen, Swanepoel and Veraverbeke}]{JanSwaVer12}
\bibinfo{author}{P.~Janssen}, \bibinfo{author}{J.~Swanepoel},
  \bibinfo{author}{N.~Veraverbeke}, \bibinfo{title}{Large sample behavior of
  the {B}ernstein copula estimator}, \bibinfo{journal}{Journal of Statistical
  Planning and Inference} \bibinfo{volume}{142} (\bibinfo{year}{2012})
  \bibinfo{pages}{1189--1197}.
\bibitem[{Joe(2014)}]{Joe14}
\bibinfo{author}{H.~Joe}, \bibinfo{title}{{Dependence Modeling with Copulas}},
  \bibinfo{publisher}{Chapman and Hall/CRC}, \bibinfo{year}{2014}.
\bibitem[{Kiriliouk et~al.(2021)Kiriliouk, Segers and Tsukahara}]{KirSegTsu21}
\bibinfo{author}{A.~Kiriliouk}, \bibinfo{author}{J.~Segers},
  \bibinfo{author}{H.~Tsukahara}, \bibinfo{title}{Resampling procedures with
  empirical beta copulas}, \bibinfo{title}{Resampling Procedures with Empirical
  Beta Copulas}, \bibinfo{publisher}{Springer Singapore}, \bibinfo{year}{2021},
  pp. \bibinfo{pages}{27--53}.
\bibitem[{Nelsen(2006)}]{Nel06}
\bibinfo{author}{R.~Nelsen}, \bibinfo{title}{{An Introduction to Copulas}},
  \bibinfo{publisher}{Springer}, \bibinfo{address}{New-York},
  \bibinfo{year}{2006}. \bibinfo{note}{Second edition}.
\bibitem[{{R Core Team}(2022)}]{Rsystem}
\bibinfo{author}{{R Core Team}}, \bibinfo{title}{{R}: {A} Language and
  Environment for Statistical Computing}, \bibinfo{organization}{R Foundation
  for Statistical Computing}, \bibinfo{address}{Vienna, Austria},
  \bibinfo{year}{2022}.
\bibitem[{Rosenblatt(1952)}]{Ros52}
\bibinfo{author}{M.~Rosenblatt}, \bibinfo{title}{{Remarks on a Multivariate
  Transformation}}, \bibinfo{journal}{The Annals of Mathematical Statistics}
  \bibinfo{volume}{23} (\bibinfo{year}{1952}) \bibinfo{pages}{470 -- 472}.
\bibitem[{R\"uschendorf(1976)}]{Rus76}
\bibinfo{author}{L.~R\"uschendorf}, \bibinfo{title}{Asymptotic distributions of
  multivariate rank order statistics}, \bibinfo{journal}{The Annals of
  Statistics} \bibinfo{volume}{4} (\bibinfo{year}{1976})
  \bibinfo{pages}{912--923}.
\bibitem[{Sancetta and Satchell(2004)}]{SanSat04}
\bibinfo{author}{A.~Sancetta}, \bibinfo{author}{S.~Satchell},
  \bibinfo{title}{The {B}ernstein copula and its applications to modeling and
  approximations of multivariate distributions}, \bibinfo{journal}{Econometric
  Theory} \bibinfo{volume}{20} (\bibinfo{year}{2004})
  \bibinfo{pages}{535--562}.
\bibitem[{Segers(2012)}]{Seg12}
\bibinfo{author}{J.~Segers}, \bibinfo{title}{Asymptotics of empirical copula
  processes under nonrestrictive smoothness assumptions},
  \bibinfo{journal}{Bernoulli} \bibinfo{volume}{18} (\bibinfo{year}{2012})
  \bibinfo{pages}{764--782}.
\bibitem[{Segers et~al.(2017)Segers, Sibuya and Tsukahara}]{SegSibTsu17}
\bibinfo{author}{J.~Segers}, \bibinfo{author}{M.~Sibuya},
  \bibinfo{author}{H.~Tsukahara}, \bibinfo{title}{The empirical beta copula},
  \bibinfo{journal}{Journal of Multivariate Analysis} \bibinfo{volume}{155}
  (\bibinfo{year}{2017}) \bibinfo{pages}{35--51}.
\bibitem[{Shaked and Shanthikumar(2007)}]{ShaSha07}
\bibinfo{author}{M.~Shaked}, \bibinfo{author}{J.~Shanthikumar},
  \bibinfo{title}{{Stochastic Orders}}, Springer Series in Statistics,
  \bibinfo{publisher}{Springer}, \bibinfo{year}{2007}.
\bibitem[{Sklar(1959)}]{Skl59}
\bibinfo{author}{A.~Sklar}, \bibinfo{title}{Fonctions de r\'epartition \`a $n$
  dimensions et leurs marges}, \bibinfo{journal}{Publications de l'Institut de
  Statistique de l'Universit\'e de Paris} \bibinfo{volume}{8}
  (\bibinfo{year}{1959}) \bibinfo{pages}{229--231}.
\bibitem[{{van der Vaart} and Wellner(2000)}]{vanWel96}
\bibinfo{author}{A.~{van der Vaart}}, \bibinfo{author}{J.~A. Wellner},
  \bibinfo{title}{{Weak Convergence and Empirical Processes}},
  \bibinfo{publisher}{Springer}, \bibinfo{address}{New York},
  \bibinfo{year}{2000}. \bibinfo{note}{Second edition}.
\bibitem[{Wolodzko(2020)}]{extraDistr}
\bibinfo{author}{T.~Wolodzko}, \bibinfo{title}{extraDistr: {A}dditional
  univariate and multivariate distributions}, \bibinfo{year}{2020}.
  \bibinfo{note}{R package version 1.9.1}.

\end{thebibliography}

\end{document}